\newtheorem{thm}{Theorem}
\newtheorem{main theorem}[thm]{Main Theorem}
\newtheorem{corollary}[thm]{Corollary}
\newtheorem{lemma}[thm]{Lemma}
\newtheorem{conjecture}[thm]{Conjecture}
\newtheorem{problem}[thm]{Problem}
\theoremstyle{definition}
\newtheorem{remark}[thm]{Remark}
\newcommand{\bea}{\begin{eqnarray*}}
\newcommand{\eea}{\end{eqnarray*}}
\newcommand{\be}{\begin{equation}}
\newcommand{\ee}{\end{equation}}
\begin{document}

\title{A survey on non-autonomous basins in several complex variables}

\author[Abbondandolo, Arosio, Forn{\ae}ss, Majer, Peters, Raissy, Vivas]{Alberto Abbondandolo, Leandro Arosio, John Erik Forn{\ae}ss, \\
Pietro Majer, Han Peters, Jasmin Raissy, Liz Vivas}

\address{Fakult\"at f\"ur Mathematik\\
Ruhr-Universit\"at Bochum\\
Germany}
\email{Alberto.Abbondandolo@rub.de}

\address{Dipartimento di Matematica\\
Universit\`a di Roma “Tor Vergata”\\
Italy}
\email{arosio@mat.uniroma2.it}

\address{Department for Mathematical Sciences\\
Norwegian University of Science and Technology\\
Trondheim, Norway}
\email{john.fornass@math.ntnu.no}

\address{Dipartimento di Matematica\\
Universit\`a di Pisa\\
Italy}
\email{majer@dm.unipi.it}

\address{KdV Institute for Mathematics\\
University of Amsterdam\\
The Netherlands}
\email{h.peters@uva.nl}

\address{Institut de Math\'ematiques de Toulouse\\
Universit\'e Paul Sabatier\\
France}
\email{jraissy@math.univ-toulouse.fr }

\address{Universidad Federal Fluminense\\
Rio de Janeiro\\
Brasil}
\email{lizvivas@impa.br}

\begin{abstract}
Consider a holomorphic automorphism which acts hyperbolically on some invariant compact set. Then for every point in the compact set there exists a stable manifold, which is a complex manifold diffeomorphic to real Euclidean space. If the point is fixed, then the stable manifold is even biholomorphic to complex Euclidean space. In fact, it is known that the stable manifold of a generic point is biholomorphic to Euclidean space, and it has been conjectured that this holds for every point.

In this article we survey the history of this problem, addressing both known results and the techniques used to obtain those results. Moreover, we present a list of seemingly simpler open problems and prove several new results, all pointing towards a positive answer to the conjecture discussed above.
\end{abstract}

\maketitle

\section{Introduction}

Let $X$ be a complex manifold equipped with a Riemannian metric, and let $f:X \rightarrow X$ be an automorphism which acts hyperbolically on some invariant compact subset $K \subset X$. Let $p \in K$ and write $\Sigma_f^s(p)$ for the stable manifold of $f$ through $p$. $\Sigma_f^s(p)$ is a complex manifold, say of complex dimension $m$. In the special case where $p$ is a fixed point it is known that $\Sigma_f^s(p)$ is biholomorphically equivalent to $\mathbb C^m$. It was conjectured by Bedford \cite{Bedford} that this equivalence holds for any $p \in K$.

\begin{conjecture}[Bedford] \label{conj:stable}
The stable manifold $\Sigma_f^s(p)$ is always equivalent to $\mathbb C^m$.
\end{conjecture}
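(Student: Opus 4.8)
The plan is to reduce Conjecture~\ref{conj:stable} to a statement about \emph{non-autonomous basins of attraction} in $\mathbb C^m$, and then to prove that statement by a normalization-and-exhaustion scheme in the spirit of Rosay--Rudin and Jonsson--Varolin.

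\emph{Step 1: reduction to a non-autonomous basin.} Uniform hyperbolicity of $f$ on the compact set $K$ gives, via the stable manifold theorem, local stable manifolds $W^s_{\mathrm{loc}}(f^n(p))$ of uniform size through the orbit points, each biholomorphic to a fixed ball $B\subset\mathbb C^m$ by a chart $\psi_n$, and with $f\bigl(W^s_{\mathrm{loc}}(f^n(p))\bigr)$ contained strictly inside $W^s_{\mathrm{loc}}(f^{n+1}(p))$. In these charts $f$ becomes a sequence of germs of biholomorphisms
$$g_n:=\psi_{n+1}\circ f\circ\psi_n^{-1}\colon(\mathbb C^m,0)\to(\mathbb C^m,0),\qquad A_n:=Dg_n(0),$$
and compactness of $K$ together with uniform hyperbolicity forces uniform bounds $\|A_n\cdots A_1\|\le C\theta^n$ for some $\theta<1$ and $\sup_n\|A_n^{-1}\|<\infty$; in particular the $g_n$ are \emph{uniformly attracting} near $0$, and the whole sequence $(g_n)$ ranges in a relatively compact family of germs. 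Transporting the identity $\Sigma_f^s(p)=\bigcup_{n\ge 0}f^{-n}\bigl(W^s_{\mathrm{loc}}(f^n(p))\bigr)$ through the charts identifies $\Sigma_f^s(p)$ with the inductive limit $\varinjlim\bigl(B\xrightarrow{g_1}B\xrightarrow{g_2}\cdots\bigr)$, i.e. with the non-autonomous basin $\Omega\bigl((g_n)\bigr)$. Thus it suffices to prove that every such uniformly attracting non-autonomous basin is biholomorphic to $\mathbb C^m$.

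\emph{Step 2: what is known.} One exhausts $\Omega$ by an increasing sequence of subdomains, each biholomorphic to a ball, with transition maps that are $C^1$-close to linear; this is possible because the $g_n$ are uniformly attracting, after replacing each germ $g_n$ on a ball by an automorphism of $\mathbb C^m$ agreeing with it to high order (Anders\'en--Lempert theory). A non-autonomous analogue of the Rosay--Rudin argument then yields $\Omega\cong\mathbb C^m$ \emph{provided} the distortions $\|A_n\cdots A_1\|\cdot\|(A_n\cdots A_1)^{-1}\|$ stay bounded (e.g. when the $A_n$ are conformal). The almost-everywhere statement of Jonsson--Varolin pushes this further: using Oseledec's theorem one gets a splitting of the stable bundle into subbundles with distinct, measure-theoretically well-defined Lyapunov exponents, hence a \emph{fixed, finite} set of resonances, and a non-autonomous Poincar\'e--Dulac normalization then reduces the $g_n$ to a triangular resonant form with only subexponential errors to absorb.

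\emph{Step 3: the obstacle.} The difficulty --- and the reason Conjecture~\ref{conj:stable} remains open --- is the case of \emph{unbounded distortion}, i.e. genuinely different contraction rates along the orbit of a point that carries no invariant measure. Without Oseledec there is no canonical splitting of $T_p\Sigma_f^s(p)$ into subbundles of fixed rates: the ``instantaneous'' singular values of $A_n\cdots A_1$ can oscillate, the resonance relations one must kill can change along the orbit, and the normalizing coordinate changes can blow up faster than the contraction compensates. I expect that overcoming this will require one of two things: either (a) a point-wise substitute for the Oseledec decomposition, exploiting the relative compactness of the family $(g_n)$ and the multiplicative/subadditive structure of $n\mapsto A_n\cdots A_1$ to bound how far the rates can drift and to keep the resonance data essentially constant; or (b) a construction of the biholomorphism $\Omega\to\mathbb C^m$ that is insensitive to resonances altogether, for instance a direct Anders\'en--Lempert absorption scheme carried out inside $\Omega$ that swallows ever larger balls regardless of the distortion, with the convergence of the scheme controlled by hand. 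The simpler open problems recorded below are meant to isolate tractable fragments of exactly this obstacle.
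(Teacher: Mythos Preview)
The statement is a \emph{conjecture}; the paper does not prove it and explicitly treats it as open throughout. So there is no ``paper's own proof'' to compare your proposal against, and your write-up is not a proof either---you say as much in Step~3.

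That said, your Steps~1 and~2 accurately reproduce the paper's framework: the reduction of $\Sigma_f^s(p)$ to a non-autonomous basin via local stable charts along the orbit (the paper's Conjecture~\ref{conj:main} and the abstract-basin construction of Forn{\ae}ss--Stens{\o}nes), and the partial results of Rosay--Rudin and Jonsson--Varolin via Oseledec regularity. Your Step~3 correctly names the obstruction the paper is built around: without Lyapunov exponents there is no fixed resonance pattern, and the non-autonomous Poincar\'e--Dulac conjugation can blow up. The paper's own new contributions (the $D^{k+\epsilon}<C$ and $D^{k+1}<C$ results, Theorems~\ref{thm:AM} and~\ref{thm:main}) are instances of your option~(a): they control the drift of contraction rates by partitioning time into intervals on which a dominant direction can be maintained on average, then patch the intervals together. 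So your proposal is a faithful road map of the state of the art, not a proof, and it is consistent with the paper's survey; there is no mathematical gap to point to beyond the one you already flag.
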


The usual approach towards this problem is to translate it to the following stronger conjecture regarding non-autonomous basins. Let $f_0, f_1, \ldots$ be a sequence of automorphisms of $\mathbb{C}^m$ satisfying
\begin{equation}\label{eq:uniform}
C\|z\| \le \|f_n(z)\| \le D\|z\|,
\end{equation}
for all $n \in \mathbb{N}$ and all $z$ lying in the unit ball $\mathbb{B}$, and where the constants $1>D>C>0$ are independent of $n$. Define the basin of attraction of the sequence $(f_n)$ by
\begin{equation}
\Omega = \Omega_{(f_n)} = \{z \in \mathbb{C}^k \mid f_n \circ \cdots \circ f_0(z) \rightarrow 0\}.
\end{equation}

\begin{conjecture} \label{conj:main}
The basin $\Omega$ is always biholomorphic to $\mathbb{C}^m$.
\end{conjecture}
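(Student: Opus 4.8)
The plan is to reduce Conjecture~\ref{conj:main} to a non-autonomous normal-form problem for the cocycle $(f_n)$ at the origin. Set $F_n := f_n \circ \cdots \circ f_0$. Using the upper bound in \eqref{eq:uniform}, any orbit that ever lands in $r\mathbb{B}$ stays there and converges to $0$ geometrically, so $\Omega = \bigcup_n F_n^{-1}(r\mathbb{B})$ for every fixed $0 < r \le 1$; since each $F_n$ is an automorphism of $\mathbb{C}^m$, hence a proper map, each $F_n^{-1}(r\mathbb{B})$ is biholomorphic to the ball, $\overline{F_n^{-1}(r\mathbb{B})}$ is compact, and, again by \eqref{eq:uniform}, it is contained in $F_{n+1}^{-1}(r\mathbb{B})$. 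Thus $\Omega$ is an increasing union of relatively compact biholomorphic copies of $\mathbb{B}$. The first step is the observation that if one can produce germs of biholomorphisms $\varphi_n$ at $0$, fixing the origin and tangent to the identity, conjugating the cocycle to its linear part $L_n := df_n(0)$ in the sense $\varphi_{n+1}\circ f_n = L_n \circ \varphi_n$ on a common ball $\rho\mathbb{B}$, then the cocycle relation propagates $\varphi_0$ to a biholomorphism of $\Omega_{(f_n)}$ onto $\Omega_{(L_n)}$; and $\Omega_{(L_n)} = \mathbb{C}^m$ automatically, because $\|L_n\cdots L_0\| \le D^{n+1} \to 0$ annihilates every vector. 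So the whole question becomes the construction of $(\varphi_n)$.

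Second, I would build $(\varphi_n)$ degree by degree. Writing $f_n = L_n + \sum_{d\ge 2} f_n^{(d)}$ and $\varphi_n = \mathrm{id} + \sum_{d\ge 2}\varphi_n^{(d)}$ and matching homogeneous terms of degree $d$, the conjugacy equation becomes, at each $d$, a non-autonomous homological equation of the schematic form
\[
\varphi_{n+1}^{(d)}\circ L_n - L_n \circ \varphi_n^{(d)} = \Psi_n^{(d)},
\]
with $\Psi_n^{(d)}$ a universal polynomial expression in $f_n$ and the lower-degree parts $\varphi_\bullet^{(<d)}$ already found. In the monomial basis this is, for each vector component $i$ and each multi-index $\alpha$ with $|\alpha| = d$, a scalar first-order linear recursion in $n$ whose transfer coefficient is $\lambda_{i,n}/\lambda_n^{\alpha}$, where the $\lambda_{\bullet,n}$ are the eigenvalues of $L_n$ and $\lambda_n^\alpha := \prod_j \lambda_{j,n}^{\alpha_j}$. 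By \eqref{eq:uniform} all $|\lambda_{\bullet,n}| \in [C,D]\subset(0,1)$, so $|\lambda_n^\alpha| \le D^d$; hence for $d$ larger than $N_0 := \log C/\log D$ one has $|\lambda_n^\alpha| < C \le |\lambda_{i,n}|$ for all $n$, the recursion is \emph{uniformly} hyperbolic, and summing it in the stable direction produces bounded, geometrically small solutions $\varphi_n^{(d)}$. Only the finitely many degrees $2 \le d \le N_0$ can therefore fail to linearize; in the \emph{bunched} regime $D^2 < C$ (i.e.\ $N_0 < 2$) there are none, a majorant estimate assembles the $\varphi_n^{(d)}$ into genuine biholomorphisms $\varphi_n$ on a ball of $n$-independent radius, and the first step goes through, proving the conjecture in that case.

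Third, when $N_0 \ge 2$ one cannot remove all the low-degree terms; instead I would run a non-autonomous Poincar\'e--Dulac reduction, conjugating $(f_n)$ to a sequence $(g_n)$ of polynomial automorphisms of degree $\le N_0$ that preserve a fixed flag $\mathbb{C}^m = V_0 \supset V_1 \supset \cdots \supset V_\ell = \{0\}$ (coming from grouping the singular-value bands of the $L_n$), inducing on each $V_j$ a self-map and on each quotient $V_{j-1}/V_j$ a uniformly contracting linear cocycle, with the remaining off-diagonal part polynomial of bounded degree. The basin of such a triangular sequence should then be assembled from the inside out: on $V_{\ell-1}$ the induced cocycle is linear and uniformly contracting, so its basin is all of $V_{\ell-1}$; and passing from one flag step to the next, the cocycle is, over the already-constructed Euclidean basin, a non-autonomous affine (shear-type) cocycle with uniformly contracting linear part, whose total basin ought to be Euclidean again by an Oka-principle/splitting argument — a non-autonomous version of the triviality of the relevant affine bundles. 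Composing the $\ell$ extension steps would give $\Omega_{(g_n)} \cong \mathbb{C}^m$, hence $\Omega_{(f_n)} \cong \mathbb{C}^m$.

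The genuine obstacle lives entirely in the middle range of degrees $2 \le d \le N_0$, and it is a small-denominator phenomenon with no autonomous analogue. There the transfer coefficient $\lambda_{i,n}/\lambda_n^{\alpha}$ has modulus in an interval straddling $1$, so the homological recursion need not be uniformly hyperbolic; near-resonances $\lambda_n^\alpha \approx \lambda_{i,n}$ can recur or accumulate along the sequence — even when every individual $L_n$ is non-resonant — and then the conjugating series diverge and the normal form $(g_n)$ acquires no uniform shape to feed into the third step. Controlling this seems to require either a renormalization at the scales where the near-resonances cluster (a non-autonomous counterpart of blowing up along the resonant locus), or an entirely geometric bypass of normalization that forces $\Omega \cong \mathbb{C}^m$ directly: for instance, producing a bounded strictly plurisubharmonic exhaustion or a pluricomplex-Green-type function on $\Omega$, or uniformly Runge proper embeddings of the exhausting domains $F_n^{-1}(r\mathbb{B})$ into $\mathbb{C}^m$. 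I expect this near-resonance analysis to be the crux; the bunched case and the formal side of the normal-form theory are, by comparison, routine adaptations of the stationary arguments.
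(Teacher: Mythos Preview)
This statement is Conjecture~\ref{conj:main}, which the paper explicitly leaves open; there is no proof to compare against, only partial results (Theorems~\ref{thm:wold}, \ref{thm:AM}, \ref{thm:main}) under extra hypotheses. Your proposal is, appropriately, a research plan rather than a proof, and its first two steps match the paper's framework: the non-autonomous conjugation by tangent-to-identity germs is exactly Lemma~\ref{lemma:conjugation}, and the observation that degrees $d$ with $D^d < C$ are uniformly removable is Theorem~\ref{thm:wold} and its order-of-contact refinement.

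The substantive gap is in your third step. You propose reducing $(f_n)$ to polynomial maps preserving a \emph{fixed} flag ``coming from grouping the singular-value bands of the $L_n$,'' but the $L_n$ vary with $n$ and in general admit no common invariant flag: which coordinate is contracted fastest can switch infinitely often, and this is precisely the obstruction the paper isolates. Its partial remedy (Section~5, the ``trains'' technique behind Theorem~\ref{thm:main}) is not to seek a global flag but to partition time into intervals $I_j$ on each of which one direction dominates on average, triangularize separately on each $I_j$ (Lemma~\ref{lemma:dominant2}), and control the transitions via the sparseness criterion of Lemma~\ref{lemma:sparse}; this interval-dependent normalization is the idea missing from your plan. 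Two smaller points: writing the homological equation in a monomial basis with scalar transfer coefficients $\lambda_{i,n}/\lambda_n^{\alpha}$ tacitly assumes the $L_n$ are diagonal --- the paper makes this an explicit extra hypothesis in Theorem~\ref{thm:main}, and in general one only obtains lower-triangular $L_n$ via $QR$; and the paper already explores several of your proposed geometric bypasses (Lemma~\ref{lemma:psh} rules out nonconstant bounded plurisubharmonic functions, Corollary~\ref{cor:image} produces a locally biholomorphic $\Phi:\mathbb{C}^2\to\Omega$), finding them consistent with the conjecture but so far insufficient to prove it.
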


Here we will present a survey on the history of the two conjectures mentioned above. We will raise a number of other, possibly weaker, open questions, and present several new results, all pointing towards a positive answer to the above conjectures.

In section 2 we give an overview of related known results, and in section 3 we present some of the techniques that were used to prove those results. In section 4 we mention a number of open problems, and in sections 4 and 5 we prove our new results.

The fifth author was supported by a SP3-People Marie Curie Actionsgrant in the project Complex Dynamics (FP7-PEOPLE-2009-RG, 248443).

\section{A short history}

Let $f$ be an automorphism of a complex manifold $X$ of dimension $m$, and let $p \in X$ be an attracting fixed point. Define the basin of attraction by
$$
\Omega = \{ z \in X \; \mid \; f^n(z) \rightarrow p\}.
$$
The following result was proved independently by Sternberg \cite{Sternberg} and Rosay-Rudin \cite{RR}.

\begin{thm}[Sternberg, Rosay-Rudin]
The basin $\Omega$ is biholomorphic to $\mathbb C^m$.
\end{thm}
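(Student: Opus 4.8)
The plan is to reduce the statement, by means of a local holomorphic normal form at $p$, to a model situation in which $f$ is replaced by a globally defined polynomial automorphism of $\mathbb{C}^m$ whose basin of attraction is all of $\mathbb{C}^m$, and then to transport a biholomorphism from the model onto $\Omega$ by an eventually stabilizing limit of conjugates.

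\emph{Local normal form.} Choosing a holomorphic chart, we may assume $p=0$, with $f(0)=0$ and $Df(0)=A$; since $p$ is attracting, the eigenvalues $\lambda_1,\dots,\lambda_m$ of $A$ all have modulus $<1$, and we order them so that $|\lambda_1|\ge\cdots\ge|\lambda_m|$. Because a resonance $\lambda_j=\lambda_1^{\mu_1}\cdots\lambda_m^{\mu_m}$ with $|\mu|\ge 2$ forces $|\lambda_j|\le(\max_i|\lambda_i|)^{|\mu|}$, there are only finitely many resonances, so the Poincar\'e--Dulac procedure terminates and yields a \emph{polynomial} normal form $g$, of lower triangular shear type: $g_j(z)=\lambda_j z_j+q_j(z_1,\dots,z_{j-1})$ with $q_j$ polynomial. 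By Sternberg's normalization theorem for holomorphic contractions, the formal conjugacy converges: there is a biholomorphism $\varphi$ from a neighborhood $U$ of $0$ onto a neighborhood $V$ of $0$, with $\varphi(0)=0$ and $\varphi\circ f=g\circ\varphi$ on $U$. Shrinking $U$, we may also assume $f(\overline U)\subset U$.

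\emph{The model.} The triangular structure makes $g$ a polynomial automorphism of $\mathbb{C}^m$ (its inverse is again triangular polynomial), and by induction on $j$ one checks that $g^n(z)\to 0$ for every $z\in\mathbb{C}^m$: the first coordinate contracts geometrically, and once the first $j-1$ coordinates are controlled, the $j$th obeys a contracting affine-type recursion. Hence the basin of $0$ for $g$ is all of $\mathbb{C}^m$.

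\emph{Transporting the coordinates.} Note that $\Omega$ is fully $f$-invariant and that every orbit in $\Omega$ eventually enters $U$. For $z\in\Omega$ set $\Psi_n(z)=g^{-n}\circ\varphi\circ f^n(z)$, which is defined as soon as $f^n(z)\in U$; since $f(U)\subset U$, the domain of $\Psi_n$ is contained in that of $\Psi_{n+1}$, and the union of these domains is $\Omega$. The conjugacy relation, in the form $g^{-1}\circ\varphi=\varphi\circ f^{-1}$, gives $\Psi_{n+1}=\Psi_n$ on the domain of $\Psi_n$, so the $\Psi_n$ glue to a holomorphic map $\Psi\colon\Omega\to\mathbb{C}^m$. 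Each $\Psi_n$ is injective on its domain, being a composition of the automorphism $g^{-n}$, the map $\varphi$ (injective on $U$), and the automorphism $f^n$; hence $\Psi$ is injective, and an injective holomorphic map between $m$-dimensional complex manifolds is a biholomorphism onto an open subset. Finally $\Psi$ is surjective: given $w\in\mathbb{C}^m$, we have $g^n(w)\in V$ for large $n$, and then $z:=f^{-n}\bigl(\varphi^{-1}(g^n(w))\bigr)\in\Omega$ satisfies $\Psi(z)=\Psi_n(z)=w$. Therefore $\Psi$ is a biholomorphism $\Omega\to\mathbb{C}^m$.

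\textbf{Main obstacle.} The analytic core is the convergence of the normalizing map $\varphi$, i.e. Sternberg's linearization/normalization theorem in the resonant contracting case, together with the bookkeeping that only finitely many resonant monomials occur — without this the normal form need not be polynomial, and a non-polynomial model would not obviously be a global automorphism of $\mathbb{C}^m$. Everything after the normal form is soft. An alternative, avoiding normal forms, is to build coordinates on $\Omega$ directly as a locally uniform limit of $A^{-n}\circ f^n$ (Koenigs-type) in the non-resonant case and to absorb resonances by a preliminary triangular change of variables; the obstacle is again the control of this limit.
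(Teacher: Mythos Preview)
Your proof is correct, and the overall architecture---conjugate locally to a lower triangular polynomial automorphism $g$ whose basin is all of $\mathbb{C}^m$, then transport via $\Psi_n=g^{-n}\circ\varphi\circ f^n$---is exactly the paper's. The one substantive difference is in the nature of the conjugacy $\varphi$. You invoke Sternberg's convergence theorem to get an \emph{exact} local conjugacy $\varphi\circ f=g\circ\varphi$, after which the sequence $\Psi_n$ literally stabilizes and no analysis is needed. The paper instead uses only the elementary algebraic fact that for each $k$ one can find a polynomial $X_k=\mathrm{Id}+\text{h.o.t.}$ with $X_k\circ F=G\circ X_k+O(\|z\|^{k+1})$; this is just solving for coefficients degree by degree and requires no convergence statement. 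The price is that the sequence $\Phi_n=G^{-n}\circ X_k\circ F^n$ no longer stabilizes, and one must actually prove it converges, which works once $k$ is chosen large enough relative to the contraction rates. Your route is cleaner once Sternberg is taken as a black box, but note that the paper's approximate-conjugacy approach is the one that generalizes to the non-autonomous setting (Lemma~\ref{lemma:conjugation}), where no exact normal form is available and matching finite jets is all one can hope for.
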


If $p$ is not attracting but hyperbolic then if one considers the restriction of $f$ to the stable manifold, the restriction is an automorphism of $\Sigma^s_f(p)$ with an attracting fixed point at $p$. Moreover, its basin of attraction is equal to the entire stable manifold, which is therefore equivalent to $\mathbb C^m$. This naturally raised Conjecture \ref{conj:stable}. Equivalence to $\mathbb C^m$ of \emph{generic} stable manifolds was proved by Jonsson and Varolin in \cite{JV}.

\begin{thm}[Jonsson-Varolin]\label{thm:JV}
Let $X$, $f$ and $K$ be as in Conjecture \ref{conj:stable}. For a generic point $p \in K$ the stable manifold through $p$ is equivalent to $\mathbb C^m$.
\end{thm}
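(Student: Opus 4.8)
The plan is to reduce the statement to Conjecture~\ref{conj:main} for a specific sequence $(f_n)$ built from the orbit of $p$, and then to prove that special case, for generic $p$, by normalizing $(f_n)$ with the help of the ergodic structure of $f|_K$. For the reduction, fix $p\in K$, put $p_n:=f^n(p)$, write $N:=\dim X$, and choose for each $n$ a holomorphic chart centered at $p_n$, of uniformly bounded size and distortion (possible by compactness of $K$), in which the stable subspace $E^s_{p_n}$ becomes $\mathbb C^m\times\{0\}\subset\mathbb C^N$. In these charts $f$ is represented by germs $F_n\colon(\mathbb C^N,0)\to(\mathbb C^N,0)$ preserving $\mathbb C^m\times\{0\}$, and restriction to this slice gives germs $f_n\colon(\mathbb C^m,0)\to(\mathbb C^m,0)$ which, by uniform hyperbolicity on $K$, satisfy \eqref{eq:uniform}. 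The local stable manifold theorem together with the identity $\Sigma^s_f(p)=\bigcup_{n\ge0}f^{-n}\big(W^s_{\mathrm{loc}}(p_n)\big)$ then identifies $\Sigma^s_f(p)$ biholomorphically with the basin $\Omega_{(f_n)}$, so it is enough to prove $\Omega_{(f_n)}\cong\mathbb C^m$ for these sequences and for generic $p$.

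Genericity enters through Oseledets' theorem: for any $f$-invariant probability measure on $K$, almost every $p$ carries a measurable, invariant splitting $E^s_p=\bigoplus_j E^{s,j}_p$ with distinct negative Lyapunov exponents $0>\lambda_1>\lambda_2>\cdots$ and tempered (sub-exponentially controlled) frames; I call $p$ generic if it belongs to this full-measure set, which is at least dense in the support of any ergodic measure. Using the Oseledets frames along the orbit, I would first conjugate $(f_n)$ so that the linear parts $A_n:=df_n(0)$ become block upper triangular, the $j$-th block contracting at rate $e^{\lambda_j}$ up to sub-exponential error. Then I would strip off the nonlinear terms degree by degree: at degree $d$ one must solve, for each $n$, a cohomological equation of the shape $h_{n+1}\circ A_n-A_n\circ h_n=(\text{the degree-}d\text{ part of }f_n)$ on homogeneous polynomial maps, which is solvable except on the \emph{resonant} monomials. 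Because all $\lambda_j$ are strictly negative, only finitely many degrees can be resonant, so after finitely many steps one is left with a sequence $(\widehat f_n)$ whose nonlinear part is purely resonant --- a sequence of polynomial automorphisms of uniformly bounded degree, of the triangular shape inherited from the Lyapunov filtration --- conjugate to $(f_n)$ by a sequence $(\Phi_n)$ of maps of controlled norm, so that $\Omega_{(f_n)}\cong\Omega_{(\widehat f_n)}$.

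It remains to see that $\Omega_{(\widehat f_n)}=\mathbb C^m$. For the purely linear sequence $(A_n)$ this is immediate from uniform contraction. The resonant triangular corrections are absorbed block by block, starting from the most strongly contracting block, where the dynamics is essentially linear, and extending the biholomorphism onto $\mathbb C^m$ over one block at a time by an explicit telescoping series; this converges because each successive block contracts strictly faster than any monomial in the variables normalized at the earlier stages. This is exactly the Sternberg--Rosay--Rudin construction \cite{Sternberg,RR} carried out in the non-autonomous, triangular setting, and together with the reduction above it yields $\Sigma^s_f(p)\cong\Omega_{(f_n)}\cong\Omega_{(\widehat f_n)}\cong\mathbb C^m$.

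The main obstacle is the uniform-in-$n$ control required in the normalization step. The solution operators of the non-autonomous cohomological equations must be bounded uniformly in $n$ --- this is where the spectral gaps $\lambda_j-\lambda_{j+1}>0$ are essential --- and the errors accumulated along the orbit must be dominated, so that the infinite composition defining $\Phi_n$, and its limit (which realizes $\Omega_{(f_n)}\cong\mathbb C^m$), actually converge. This is precisely the point where the tempered Oseledets estimates seem indispensable, and it is why the argument delivers only generic $p$; to reach \emph{every} $p$, as Conjecture~\ref{conj:stable} demands, one would have to replace this ergodic input with something genuinely new.
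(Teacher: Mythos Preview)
The paper does not supply its own proof of this theorem; it is quoted from \cite{JV} and accompanied only by the remark that Jonsson and Varolin established Conjecture~\ref{conj:stable} for Oseledec points, with ``generic'' meaning full measure for every $f$-invariant probability measure. There is therefore no in-paper argument to compare your sketch against.

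That said, your outline is a faithful summary of the Jonsson--Varolin strategy and is consistent with the paper's one-line description. The reduction of $\Sigma^s_f(p)$ to a non-autonomous basin $\Omega_{(f_n)}$ via charts along the orbit, the invocation of Oseledets to obtain a tempered Lyapunov splitting of $E^s_p$, the degree-by-degree elimination of non-resonant terms by solving non-autonomous cohomological equations, and the observation that strict negativity of all exponents forces only finitely many resonant monomials (hence a triangular polynomial normal form of bounded degree) are all the right ingredients. Two points deserve a word of caution. First, in the non-autonomous setting ``resonance'' must be phrased in terms of the Lyapunov exponents $\lambda_j$, not the instantaneous eigenvalues of $A_n$; the cohomological solutions one produces are then not uniformly bounded but only \emph{tempered} (sub-exponentially growing), and one must check that this suffices for the conjugation lemma and for the final convergence argument. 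Second, your last step---showing $\Omega_{(\widehat f_n)}=\mathbb C^m$ for the resonant triangular sequence---is the place where most of the analytic work hides: the coefficients of $\widehat f_n$ may grow sub-exponentially, so the usual ``lower triangular $\Rightarrow$ basin $=\mathbb C^m$'' argument needs the Lyapunov gaps to dominate this growth. You acknowledge this in your final paragraph, and that is exactly the crux of \cite{JV}.
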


Here generic refers to a subset of $K$ which has full measure with respect to any invariant probability measure on $K$. In fact Jonsson and Varolin showed that Conjecture \ref{conj:stable} holds for so-called Oseledec points. Results of Jonsson and Varolin were extended by Berteloot, Dupont and Molino in \cite{BDM}. More recently the following was shown in \cite{AAM}.

\begin{thm}[Abate-Abbondandolo-Majer]\label{thm:Lyapunov}
The existence of Lyapunov exponents is enough to guarantee $\Sigma_f^s(p) \cong \mathbb C^m$.
\end{thm}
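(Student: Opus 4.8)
The plan is to follow the strategy of Jonsson and Varolin from \cite{JV} — translate the statement into the form of Conjecture~\ref{conj:main}, put the resulting non-autonomous system into a Poincar\'e--Dulac normal form, and read off the basin — but to carry out every estimate using only the existence of the forward Lyapunov exponents at $p$, rather than the full Oseledec regularity used in \cite{JV}. First I would pass to the non-autonomous setting: choosing local holomorphic coordinates $\psi_n$ centred at $p_n:=f^n(p)$, the automorphism becomes near $p_n$ a germ $f_n:=\psi_{n+1}\circ f\circ\psi_n^{-1}$ fixing the origin, and, as in \cite{RR,JV}, $\Sigma_f^s(p)$ is biholomorphic to the basin $\Omega_{(f_n)}$, while uniform hyperbolicity of $f$ on $K$ provides \eqref{eq:uniform}. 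Write $A_n:=df_n(0)$; the hypothesis is that $\lim_n\tfrac1n\log\|A_{n-1}\cdots A_0 v\|$ exists for every $v\neq0$, and uniform hyperbolicity forces all these exponents to be negative. Let $-\mu_1<\cdots<-\mu_r<0$ be their distinct values, and let $\mathbb{C}^m=V_1^{(n)}\supset\cdots\supset V_{r+1}^{(n)}=\{0\}$ be the associated strictly decreasing forward filtration at time $n$: it is equivariant, $A_nV_k^{(n)}\subseteq V_k^{(n+1)}$, and $A_n$ acts on the graded piece $V_k^{(n)}/V_{k+1}^{(n)}$ with exponent $-\mu_k$.

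Set $N:=\lceil\mu_1/\mu_r\rceil$ and fix $\epsilon>0$ much smaller than $\mu_r/N$ and than $\min_k(\mu_k-\mu_{k+1})$. From the mere existence of the exponents I would construct Hermitian norms $\|\cdot\|_n$ on $\mathbb{C}^m$ with respect to which $A_n$ respects the filtration and is block triangular, each graded block contracts with rate in $(e^{-\mu_k-\epsilon},e^{-\mu_k+\epsilon})$, and $\|v\|\le\|v\|_n\le C_\epsilon e^{\epsilon n}\|v\|$. I emphasize that I do \emph{not} assert the matching lower bound $\|v\|_n\ge c_\epsilon e^{-\epsilon n}\|v\|$: in the Oseledec-regular case that bound records the temperedness of the splitting, but on the stable bundle — where every graded block genuinely contracts — the one-sided estimate will be enough.

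In these coordinates a monomial $z^\alpha$ landing in the $i$-th graded block is resonant precisely when $\sum_j|\alpha_j|\mu_j=\mu_i$, where $|\alpha_j|$ is its degree in the block-$j$ variables. Since $\mu_j\ge\mu_r>0$ and $|\alpha|\ge2$, this forces $|\alpha|\le\mu_i/\mu_r\le N$ and $|\alpha_j|=0$ whenever $\mu_j\ge\mu_i$, so only finitely many resonant monomials occur, of bounded degree, each in strictly slower variables only. I would then conjugate $(f_n)$ by a sequence $\phi_n=\mathrm{id}+(\text{higher order})$ of germs killing all non-resonant monomials, reaching a triangular normal form $g_n=A_n+(\text{resonant polynomial of degree}\le N)$ in which the $i$-th graded block is affine in the $i$-th variables with coefficients polynomial in the slower blocks $i+1,\dots,r$; in particular each $g_n$ is a triangular polynomial automorphism of $\mathbb{C}^m$. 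The main obstacle is exactly to make this conjugacy converge: the homological recursions for the components of $\phi_n$ must be summed over the future (for monomials contracting faster than their target block) or over the past (for the slower ones), and one must then show that the $\phi_n$ obtained are well defined and uniformly close to the identity on a \emph{fixed} neighbourhood of the origin — using only the upper bound $\|\cdot\|_n\le C_\epsilon e^{\epsilon n}\|\cdot\|$. This is where the smallness of $\epsilon$ against the spectral gaps and against $1/N$ is spent, in a majorant-type iteration in which the $e^{\epsilon n}$ losses are dominated by the genuine exponential contraction of the blocks; it is the one point at which doing without temperedness really costs something.

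Finally, the usual argument of \cite{RR,JV} extends $\phi_0$ to a biholomorphism $\Omega_{(f_n)}\cong\Omega_{(g_n)}$ (using that the $g_n$ are automorphisms of $\mathbb{C}^m$), and $\Omega_{(g_n)}$ is computed directly by solving the triangular system downward from $i=r$: block $r$ is a non-autonomous linear contraction, so every orbit in it converges, and, granting that blocks $i+1,\dots,r$ already tend to $0$, block $i$ satisfies an affine recursion whose inhomogeneous term, being resonant, decays like $e^{-\mu_i n}$ times a polynomial in $n$ and is therefore absorbed by the variation-of-constants sum (again $\epsilon<\mu_r$ is used here). Hence every orbit of $(g_n)$ converges, $\Omega_{(g_n)}=\mathbb{C}^m$, and therefore $\Sigma_f^s(p)\cong\Omega_{(f_n)}\cong\Omega_{(g_n)}=\mathbb{C}^m$.
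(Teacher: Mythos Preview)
The paper does not contain a proof of Theorem~\ref{thm:Lyapunov}. This is a survey article, and the theorem is stated in Section~2 (``A short history'') as a result from the literature, attributed to Abate, Abbondandolo and Majer and cited from \cite{AAM}; the paper then moves immediately on to Forn{\ae}ss's Short~$\mathbb C^2$ example without giving any argument. There is therefore nothing in the present paper against which to compare your proposal.

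That said, your outline is a plausible sketch of the strategy one would expect in \cite{AAM}: reduce to a non-autonomous basin, use the forward Lyapunov filtration to build adapted (slowly varying) norms in which the linear cocycle is block triangular with controlled block rates, run a non-autonomous Poincar\'e--Dulac elimination of non-resonant monomials to reach a triangular polynomial normal form, and then read off $\Omega_{(g_n)}=\mathbb C^m$ by solving the triangular system block by block. You correctly flag the one genuinely delicate point --- establishing uniform bounds on the conjugating germs $\phi_n$ using only the one-sided estimate $\|\cdot\|_n\le C_\epsilon e^{\epsilon n}\|\cdot\|$ rather than the two-sided temperedness available at Oseledec points --- and you are right that this is precisely what separates the result from Jonsson--Varolin's Theorem~\ref{thm:JV}. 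Your proposal, however, only names this difficulty and asserts that a majorant iteration handles it; the actual content of \cite{AAM} lies in carrying out that step, so as written your sketch is an accurate roadmap but not yet a proof.
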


Shortly after the positive result of Jonsson and Varolin, Forn{\ae}ss proved the following negative result, which led many people to believe that Conjectures \ref{conj:stable} and \ref{conj:main} must be false. Consider a sequence of maps $(f_n)_{n\ge 0}$ given by
$$
f_n: (z,w) \rightarrow (z^2 + a_n w, a_n z),
$$
where $|a_0| < 1$ and $|a_{n+1}| \le |a_n|^2$.

\begin{thm}[Forn{\ae}ss]\label{thm:short}
The basin $\Omega_{(f_n)}$ is not biholomorphic to $\mathbb C^2$. Indeed there exist a bounded pluri-subharmonic function on $\Omega_{(f_n)}$ which is not constant.
\end{thm}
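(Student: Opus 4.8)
The plan is to exhibit a bounded, non-constant plurisubharmonic function on the basin $\Omega:=\Omega_{(f_n)}$. This is enough, by the following soft remark: on $\mathbb{C}^2$ every plurisubharmonic function which is bounded above is constant --- restrict it to an arbitrary complex affine line to obtain a subharmonic function on $\mathbb{C}$ bounded above, hence constant by the classical Liouville theorem for subharmonic functions --- so the existence of such a function on $\Omega$ prevents any biholomorphism $\Omega\cong\mathbb{C}^2$.

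To construct the function, write $F_n:=f_n\circ\cdots\circ f_0$ and let $z_n=z_n(z,w)$ denote the first coordinate of $F_n(z,w)$, a polynomial on $\mathbb{C}^2$; from the formula for $f_n$ one reads off the recursion $z_{n+1}=z_n^2+a_{n+1}a_nz_{n-1}$. Since $\log|g|$ is plurisubharmonic whenever $g$ is holomorphic, each
\[
\varphi_n:=\frac{1}{2^n}\log|z_n|
\]
is plurisubharmonic on $\mathbb{C}^2$, the weight $2^{-n}$ being dictated precisely by the recursion, which makes $\varphi_{n+1}$ essentially agree with $\varphi_n$ whenever the quadratic term $z_n^2$ dominates. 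I first check that the family $(\varphi_n)$ is locally uniformly bounded above on $\Omega$: by \eqref{eq:uniform} with $D<1$ each $f_n$ maps a fixed ball $r\mathbb{B}$ with $r<1$ into itself, so $\Omega=\bigcup_N F_N^{-1}(r\mathbb{B})$ is an increasing union of open sets, and for a compact $K\subset\Omega$ there is an $N$ with $F_n(K)\subset r\mathbb{B}$ for all $n\ge N$; hence $\varphi_n\le 2^{-n}\log r<0$ on $K$ for those $n$, while the finitely many earlier $\varphi_n$ are bounded on $K$. Consequently the upper semicontinuous regularizations $\psi_N:=\bigl(\sup_{n\ge N}\varphi_n\bigr)^{*}$ are plurisubharmonic, decrease with $N$, and satisfy $\psi_N\le 0$, so $u:=\lim_N\psi_N$ is plurisubharmonic on $\Omega$ with $u\le 0$; then $\max(u,-1)$ is a bounded plurisubharmonic function on $\Omega$, and the whole point is to see that it is non-constant.

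For non-constancy I would contrast two regimes. Near the origin the maps $f_n$ contract super-exponentially: the linear part of $f_n$ has operator norm $|a_n|\le|a_0|^{2^n}$ and the quadratic term is even smaller there, so on a sufficiently small ball $\|F_n(p)\|$ is no larger than $|a_0|^{2^{n+1}}\|p\|$ up to a merely geometric factor $C^n$; hence $\varphi_n\le 2\log|a_0|+o(1)$ near $0$, and so $u$ stays below a strictly negative constant on a neighborhood of the origin. At the opposite extreme, consider points $(z,0)$ with $|z|$ close to $1$: here $z_0=z^2$ and, along the orbit, $z_{n+1}=z_n^2+(\text{correction})$ with the correction super-exponentially small relative to $z_n^2\approx z^{2^{n+1}}$ --- this is exactly where the hypothesis $|a_{n+1}|\le|a_n|^2$ is used --- so that $(z,0)\in\Omega$ and $\varphi_n(z,0)\to 2\log|z|$, locally uniformly near such points; therefore $u(z,0)=2\log|z|\to 0$ as $|z|\to1^{-}$. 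Thus $u$ is bounded above by a strictly negative constant near $0$ but attains values arbitrarily close to $0$, so $u$, and hence $\max(u,-1)$, is non-constant; by the first paragraph, $\Omega$ is not biholomorphic to $\mathbb{C}^2$.

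The one genuinely delicate point --- and the place where the strong decay hypothesis $|a_{n+1}|\le|a_n|^2$ is essential --- is the orbit analysis in the third paragraph: one must control, uniformly on suitable regions, the iterates of points like $(z,0)$ through the transition from the ``quadratic'' regime into the linearly-contracting regime near $0$, showing that $z_n$ decays at exactly the doubly-exponential rate forced by the $z^2$-term and no faster, i.e.\ that the linear coupling between the two coordinates never overtakes this quadratic self-interaction. Everything else --- plurisubharmonicity of logarithms of moduli of holomorphic functions, stability of plurisubharmonicity under regularized decreasing envelopes of a locally bounded-above family, openness of $\Omega$, and the Liouville property of $\mathbb{C}^2$ --- is routine.
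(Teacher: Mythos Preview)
The paper does not actually prove this theorem; it is quoted as a result of Forn{\ae}ss with a reference to \cite{Fornaess}, so there is no in-paper argument to compare against. Your approach---building a Green-type function from $\varphi_n=2^{-n}\log|z_n|$, taking a regularized $\limsup$, and showing the result is bounded, plurisubharmonic, and non-constant---is essentially Forn{\ae}ss's original construction, and the outline is correct.

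One genuine slip: you justify the common invariant ball by appealing to \eqref{eq:uniform}, but the whole point of this example (as the paper emphasizes immediately after the theorem statement) is that the sequence $(f_n)$ \emph{violates} \eqref{eq:uniform}; there is no uniform lower bound $C>0$, and on the unit ball there is not even a uniform upper bound $D<1$. What you actually need is only the existence of some $r<1$ with $f_n(r\mathbb B)\subset r\mathbb B$ for all $n$, and this follows by direct computation: for $\|(z,w)\|\le r$ one has $|z^2+a_nw|\le r^2+|a_0|r$ and $|a_n z|\le |a_0|r$, so any $r<1-|a_0|$ works. With that correction, your exhaustion $\Omega=\bigcup_N F_N^{-1}(r\mathbb B)$ and the local uniform upper bound on $(\varphi_n)$ are valid.

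A minor simplification: the analysis near the origin is not strictly needed for non-constancy. Once you have established that $\varphi_n(z,0)\to 2\log|z|$ locally uniformly on the annulus $|a_0|<|z|<1$ (which is where the hypothesis $|a_{n+1}|\le|a_n|^2$ enters, giving the ratio $|a_{n+1}a_n z_{n-1}|/|z_n|^2\lesssim(|a_0|/|z|)^{3\cdot 2^n}\to 0$), it follows that $u(z,0)=2\log|z|$ there, and this function is already non-constant in $z$.
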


We note that this result does not give a counterexample to Conjecture \ref{conj:main}, as the sequence $(f_n)$ violates the condition
$$
C\|z\| \le \|f_n(z)\| \le D \|z\|
$$
on any uniform neighborhood of the origin. In Theorem \ref{thm:short} the rate of contraction is not uniformly bounded by below. In fact, we will show (see Lemma \ref{lemma:psh} in section 4) that under the assumption of uniform contraction (Equation \eqref{eq:uniform}) all bounded pluri-subharmonic functions are constant.

If the bounds $C$ and $D$ satisfy $D^2 < C$ then it was shown by Wold in \cite{Wold} that the basin of attraction is biholomorphic to $\mathbb C^m$. This result was generalized by Sabiini \cite{Sabiini} to the following statement.

\begin{thm}\label{thm:wold}
Let $(f_n)$ be a sequence of automorphisms which satisfies the conditions in Conjecture \ref{conj:main}, and suppose that $D^k < C$ for some $k \in \mathbb N$. Suppose further that the maps $f_n$ all have order of contact $k$. Then the basin of attraction $\Omega_{(f_n)}$ is biholomorphic to $\mathbb C^m$.
\end{thm}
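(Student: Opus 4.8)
The plan is to follow the classical Rosay--Rudin / Sternberg scheme in its non-autonomous form, adapted to handle the gap between the order of contact $k$ and the bounds $C, D$. Recall the standard strategy: one wants to construct a sequence of biholomorphisms exhausting $\Omega_{(f_n)}$ by building, at each stage, a conjugation that pushes the composition $f_n \circ \cdots \circ f_0$ closer to its ``linear-type'' normal form. The hypothesis that each $f_n$ has order of contact $k$ means that $f_n(z) = L_n(z) + O(\|z\|^{k+1})$ for a sequence of linear maps $L_n$ with $C\|z\| \le \|L_n z\| \le D\|z\|$; equivalently, all homogeneous terms of degree $2, \ldots, k$ vanish. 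The condition $D^k < C$ is exactly what is needed so that the nonlinear error, which is of size $\|z\|^{k+1}$, decays faster than the linear part contracts: if $z$ is shrunk by a factor $\sim D$ at each step, the error term shrinks like $D^{k+1}$, and since $D^{k+1} < C \cdot D \le$ (the true contraction), the errors are summable along orbits.

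First I would set up the telescoping product. For a point $z$ in the basin, write $z_n = f_{n-1} \circ \cdots \circ f_0(z)$, so $z_n \to 0$, and by \eqref{eq:uniform} we have $\|z_{n+1}\| \le D \|z_n\|$ once the orbit enters $\mathbb{B}$, hence eventually $\|z_n\| \le D^n \|z_0\|$ up to a constant. Next I would define the conjugating maps: let $\Phi_n := (L_{n-1} \circ \cdots \circ L_0)^{-1} \circ (f_{n-1} \circ \cdots \circ f_0)$, which is a holomorphic map defined on a neighborhood of the part of the orbit that has entered $\mathbb{B}$. One estimates $\Phi_{n+1} - \Phi_n$ on a suitable ball: the difference is controlled by $(L_{n-1}\cdots L_0)^{-1}$ applied to the nonlinear error $f_n(z_n) - L_n z_n = O(\|z_n\|^{k+1})$. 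The operator norm of $(L_{n-1}\cdots L_0)^{-1}$ grows at most like $C^{-n}$, while $\|z_n\|^{k+1} \lesssim D^{(k+1)n}$, so the terms are bounded by $(D^{k+1}/C)^n$, and $D^{k+1}/C < D \cdot (D^k/C) < D < 1$. Thus $\Phi_n$ converges uniformly on compact subsets of $\Omega_{(f_n)}$ to a holomorphic map $\Phi$.

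The map $\Phi$ conjugates the non-autonomous system $(f_n)$ to the linear non-autonomous system $(L_n)$, in the sense that $\Phi$ intertwines $f_n \circ \cdots \circ f_0$ with $L_n \circ \cdots \circ L_0$; it is injective (standard: a locally biholomorphic limit that is injective on the source of each exhausting piece, using that each $\Phi_n$ is biholomorphic onto its image and the convergence is good enough to preserve injectivity via Hurwitz), and its image is the basin of the linear system, which is all of $\mathbb{C}^m$ since the $L_n$ are contractions on $\mathbb{B}$ with uniform bounds. Finally, one shows that the linear non-autonomous basin is biholomorphic to $\mathbb{C}^m$ --- indeed for linear contracting cocycles this is elementary, as one can linearize further or simply observe the basin is an increasing union of images of balls under linear isomorphisms, which is a Runge exhaustion, hence $\mathbb{C}^m$ by the standard argument that such a manifold is an increasing union of biholomorphic copies of $\mathbb{C}^m$ with Runge inclusions.

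The main obstacle I anticipate is making the convergence argument for $\Phi_n$ and the injectivity of the limit uniform over \emph{all} of $\Omega_{(f_n)}$, not just on the uniform neighborhood of the origin. Points far from the origin enter $\mathbb{B}$ only after many iterations, and during that initial transient the maps $f_n$ are merely automorphisms with no control; one must use the fact that $f_n \circ \cdots \circ f_0$ is a genuine biholomorphism between open sets, choose an exhaustion of $\Omega_{(f_n)}$ by sets $\Omega_N = \{z : z_N \in \tfrac{1}{2}\mathbb{B}\}$, and run the estimates starting from step $N$ while pulling back through the (fixed, finite) initial automorphisms by composition. Keeping track of constants that depend on $N$ but not blowing up the argument is the delicate bookkeeping; the order-of-contact hypothesis is used precisely to guarantee the tail estimate $\sum (D^{k+1}/C)^n < \infty$ is uniform on each $\Omega_N$ once the orbit is inside $\mathbb{B}$.
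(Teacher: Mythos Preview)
Your approach is essentially the one the paper indicates: conjugate the non-autonomous system to its sequence of linear parts $(L_n)$ via the limit $\Phi = \lim (L_{n-1}\cdots L_0)^{-1}\circ(f_{n-1}\cdots f_0)$, which is exactly the content of Lemma~\ref{lemma:conjugation} with $g_n = L_n$ and $h_n = \mathrm{Id}$. The paper states this explicitly when it says that under $D^k < C$ ``one can ignore all but the linear terms, see Lemma~\ref{lemma:conjugation}.''

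There is, however, an off-by-one slip in your reading of the hypothesis. In this paper ``order of contact $k$'' means $f_n(z) = L_n(z) + O(\|z\|^{k})$, not $O(\|z\|^{k+1})$; see the sentence right after the statement of the theorem. With the correct reading your telescoping estimate becomes
\[
\|\Phi_{n+1}-\Phi_n\| \;\lesssim\; C^{-n}\,\|z_n\|^{k} \;\lesssim\; (D^{k}/C)^{n},
\]
which is still summable precisely because $D^k < C$. Your written estimate $(D^{k+1}/C)^n$ would only use $D^{k+1} < C$, a strictly weaker hypothesis; that is the regime of Theorem~\ref{thm:main}, not of the present theorem, and the whole point of the later sections is that passing from $D^k<C$ to $D^{k+1}<C$ is genuinely harder because degree-$k$ terms can no longer be absorbed into the tail. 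So the slip is harmless for the argument here but worth correcting so you do not think you have proved more than you have.

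Two smaller remarks. First, for the linear sequence $(L_n)$ the basin is literally all of $\mathbb{C}^m$: each $L_n$ satisfies $\|L_n z\|\le D\|z\|$ on $\mathbb{B}$, hence everywhere by linearity, so every orbit tends to $0$. Your final paragraph about Runge exhaustions is therefore unnecessary. Second, the assertion that $\Phi$ is surjective onto $\mathbb{C}^m$ deserves a line: the cleanest route is to run the symmetric estimate on $\Psi_n = (f_{n-1}\cdots f_0)^{-1}\circ(L_{n-1}\cdots L_0)$, using the lower bound $C$ in \eqref{eq:uniform} (and Lemma~\ref{lemma:JE4}-type control of $f^{-n}$) to get convergence on all of $\mathbb{C}^m$ to an inverse of $\Phi$.
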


Here a map $f$ has order of contact $k$ if $f(z) = l(z) + O(\|z\|^k)$, where $l(z)$ is linear.
 
\medskip

More recently the following was shown in \cite{AM}.

\begin{thm}[Abbondandolo-Majer]\label{thm:AM}
There exist a constant $\epsilon  = \epsilon(m, k) > 0$ so that the following holds. For any sequence $(f_n)$ with order of contact $k$, which satisfies the conditions in Conjecture \ref{conj:main} with
$$
D^{k+\epsilon} < C,
$$
the basin $\Omega_{(f_n)}$ is biholomorphic to $\mathbb C^m$.
\end{thm}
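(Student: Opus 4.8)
The plan is to reduce, by a non-autonomous Poincar\'e--Dulac normalisation, to a sequence of polynomial automorphisms of a very restricted shape, and then to identify the basin of that reduced sequence with $\mathbb{C}^m$ by a quantitative telescoping estimate; the constant $\epsilon$ is pinned down entirely by this last step. Since we are free to pick $\epsilon=\epsilon(m,k)$ we will in particular arrange $\epsilon<1$.

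\emph{Reduction to a normal form.} After a linear change of coordinates and a rescaling $z\mapsto\rho z$ --- which replaces $f_n$ by $\rho^{-1}f_n(\rho\,\cdot)$, leaves the linear parts $l_n$ (hence $C$ and $D$) unchanged, and makes the homogeneous terms of $f_n$ of degree $\ge k$ as small as we wish --- we may assume $f_n(z)=l_n(z)+h_n(z)$ on $\mathbb{B}$, with $\|l_n\|\le D$, $\|l_n^{-1}\|\le C^{-1}$, and $h_n=O(\|z\|^k)$ uniformly small. Because $D<1$ and $\epsilon<1$ we have $D^{k+1}=D^{k+\epsilon}D^{1-\epsilon}<C$; consequently the small divisors $\lambda_i-\lambda^\alpha$ (here $\lambda_1,\dots,\lambda_m$ are the eigenvalues of $l_n$) with $|\alpha|\ge k+1$, and those with $|\alpha|=k$ and $\alpha_i\ge1$ (for which $|\lambda_i-\lambda^\alpha|\ge|\lambda_i|(1-D^{k-1})\ge C(1-D^{k-1})$), are bounded away from $0$ uniformly in $n$. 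A majorant argument then produces biholomorphisms $\phi_n$ of a fixed ball $\mathbb{B}_\rho$, uniformly close to $\mathrm{id}$ and of the form $\phi_n=\mathrm{id}+O(\|z\|^k)$, with $\phi_{n+1}\circ f_n\circ\phi_n^{-1}=g_n$ on $\mathbb{B}_\rho$, where $g_n(z)=l_n(z)+p_n(z)$ and $p_n$ is homogeneous of degree $k$ with monomials only of the form $c\,z^\alpha e_i$ with $\alpha_i=0$. Writing $F_N=f_{N-1}\circ\cdots\circ f_0$, $G_N=g_{N-1}\circ\cdots\circ g_0$ and $\Phi_N:=G_N^{-1}\circ\phi_N\circ F_N$ on $F_N^{-1}(\mathbb{B}_\rho)$, the conjugacy relation forces $\Phi_{N+1}=\Phi_N$ wherever $F_N$ already maps into $\mathbb{B}_\rho$; hence $\Phi_N$ stabilises on every compact subset of $\Omega_{(f_n)}$, the limit $\Phi:=\lim_N\Phi_N$ is an injective holomorphic map, and a check of images shows $\Phi$ is a biholomorphism of $\Omega_{(f_n)}$ onto $\Omega_{(g_n)}$. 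It therefore suffices to prove $\Omega_{(g_n)}\cong\mathbb{C}^m$.

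\emph{The reduced sequence and the main obstacle.} Fix $n$ and order the coordinates so that $|\lambda_1^{(n)}|\ge\cdots\ge|\lambda_m^{(n)}|$. A surviving monomial $c\,z^\alpha e_i$ of $p_n$ is either \emph{contracting at time $n$}, i.e. $|(\lambda^{(n)})^\alpha|<|\lambda_i^{(n)}|$, or \emph{subdiagonal at time $n$}: in the latter case $|(\lambda^{(n)})^\alpha|\ge|\lambda_i^{(n)}|$ together with $\alpha_i=0$ forces $|\lambda_j^{(n)}|\ge|\lambda_i^{(n)}|$ for every $j$ with $\alpha_j>0$, so that piece is lower triangular with linear diagonal; moreover it cannot be built entirely from coordinates tied with $i$, so it genuinely involves a strictly less contracted coordinate. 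Thus at each $n$, $g_n$ is the composition of $l_n$ with a (subdiagonal, degree-$k$) piece that carries all of the obstruction and a uniformly contracting remainder, and there are only finitely many possible orderings. One now runs the Sabiini-type comparison $\Theta_N:=L_N^{-1}\circ G_N$, with $L_N=l_{N-1}\circ\cdots\circ l_0$, whose increments are $\Theta_{N+1}-\Theta_N=L_N^{-1}\,l_N^{-1}\,p_N(G_N(z))$, and bounds them on compact subsets of $\Omega_{(g_n)}$. The subdiagonal structure of $p_n$ should propagate to a refined coordinatewise estimate on the orbit $G_N(z)$: a coordinate fed only by strictly less contracted ones is driven below its linear contraction rate, and iterating this down the at most $m$ levels improves the effective rate by a fixed power of $D$; feeding this back into the increment bound, together with the count $\binom{m+k-1}{k}$ of available monomials, should yield an estimate $C\,(D^{k+\delta}/C)^N$ with $\delta=\delta(m,k)>0$, so that taking $\epsilon=\epsilon(m,k)\le\min(\delta,1)$ makes $D^{k+\epsilon}<C$ enough for summability. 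The limit $\Theta$ is then an injective holomorphic map of $\Omega_{(g_n)}$ onto the increasing union of the ellipsoids $L_N^{-1}(\mathbb{B})$, which exhausts $\mathbb{C}^m$; with the reduction step this proves the theorem.

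The genuine difficulty --- and the reason $\epsilon$ must depend on $m$ and $k$ --- is precisely this last estimate. The hierarchy above is only a \emph{relative} ordering of the contraction rates and may change from one $n$ to the next, with possible near-ties, so there is no stable flag along which to split the dynamics; controlling the cascade of orbit inflations through a time-varying hierarchy, uniformly over all admissible sequences $(l_n)$, is the heart of the matter. By comparison, the non-autonomous normalisation of the first step is technical but routine.
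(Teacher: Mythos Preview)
Your proposal outlines a strategy but leaves the decisive step unresolved, and it differs substantially from the approach actually used (and sketched in the paper, following \cite{AM}).

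\medskip

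\emph{The gap.} Your second step hinges on an estimate of the form $\|\Theta_{N+1}-\Theta_N\|\le C'(D^{k+\delta}/C)^N$ for some $\delta=\delta(m,k)>0$, and you yourself identify obtaining this as ``the heart of the matter'' without actually carrying it out. The obstacle you name --- that the ordering of the eigenvalues of $l_n$ varies with $n$, so the surviving degree-$k$ monomials of $p_n$ are subdiagonal only with respect to an instantaneous and shifting flag --- is precisely what separates this theorem from the easy case $D^k<C$ of Theorem~\ref{thm:wold}. Without a concrete mechanism to control the cascade through a time-varying hierarchy, the argument does not close: nothing you have written prevents the orbit $G_N(z)$ from being repeatedly inflated by degree-$k$ terms that are subdiagonal at one step and effectively superdiagonal a few steps later, and your hoped-for $\delta>0$ is never produced. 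As a secondary point, your normalisation paragraph treats the $l_n$ as simultaneously diagonal when speaking of eigenvalues and monomials $z^\alpha e_i$; in general one only obtains lower triangular linear parts via a non-autonomous $QR$ change, the homological equations then couple coefficients, and the relevant bounds are on products along orbits rather than on single-step divisors $\lambda_i-\lambda^\alpha$.

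\medskip

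\emph{What the paper does instead.} Abbondandolo and Majer do not attempt a single global comparison with the linear cocycle. They partition time into intervals $I_j=[p_j,p_{j+1})$ of length $p_{j+1}-p_j=k^j$, and on each $I_j$ pick the tangent vector $v_j$ most contracted by the full product $Df_{p_{j+1},p_j}(0)$. A non-autonomous unitary change along $I_j$ makes the linear parts lower triangular with $v_j$ as the last coordinate; because $v_j$ was chosen as the global minimiser over $I_j$, the domination hypothesis of Lemma~\ref{lemma:dominant} holds \emph{on average} along $I_j$, and this suffices to conjugate the maps on $I_j$ to genuine lower triangular degree-$k$ polynomials with uniformly bounded coefficients. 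The intervals are then glued using Lemma~\ref{lemma:sparse}: since $\sum_j (p_{j+1}-p_j)/k^j=\sum_j 1=\infty$, the degree growth caused by the unitary transitions between consecutive intervals is absorbed, and the resulting basin is $\mathbb{C}^m$. The constant $\epsilon(m,k)$ arises from the quantitative version of ``on average'' needed to run Lemma~\ref{lemma:dominant} along each $I_j$. The idea you are missing is exactly this: rather than trying to track a flag that changes at every step, freeze the flag over exponentially growing blocks chosen in advance, triangularise block by block, and pay for the transitions with Lemma~\ref{lemma:sparse} instead of with a uniform pointwise bound.
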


While this result only seems marginally stronger (indeed, $\epsilon$-stronger), it is in fact a much deeper result. If $D^k < C$ one can ignore all but the linear terms, see Lemma \ref{lemma:conjugation} in the next section. But if $D^k \ge C$ then one has to deal with the terms of order $k$, which is a major difficulty. In fact, looking at more classical results in local complex dynamics, the major difficulty in describing the behavior near a fixed point usually lies in controlling the lowest order terms which are not trivial. Theorem \ref{thm:AM} and Theorem \ref{thm:main} below may therefore be important steps towards a complete understanding of Conjecture \ref{conj:main}.

Now that techniques have been found to deal with these terms of degree $k$, it is natural to ask whether the condition $D^{k+\epsilon} < C$ can be pushed to the condition $D^{k+1} < C$, since as long as $D^{k+1} < C$ is satisfied one can ignore terms of degree strictly greater than $k$. We will show here that indeed we can weaken the requirement to $D^{k+1} < C$, at least in $2$ complex dimensions and under the additional assumption that the linear parts of all the maps $f_n$ are diagonal. Whether diagonality is a serious extra assumption or merely simplifies the computations remains to be seen.

\begin{thm}\label{thm:main}
Let $(f_n)$ be a sequence of automorphisms of $\mathbb C^2$ which satisfies the conditions in Conjecture \ref{conj:main}, and suppose that the maps $f_n$ all have order of contact $k$. Assume further that the linear part of each map $f_n$ is diagonal and that 
$$
D^{k+1} < C.
$$
Then the basin of attraction $\Omega_{(f_n)}$ is biholomorphic to $\mathbb C^2$.
\end{thm}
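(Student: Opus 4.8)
The plan is to conjugate $(f_n)$, by biholomorphisms of a fixed ball around the origin that are tangent to the identity to order $k-1$, to an explicit polynomial normal form, and then to run the renormalization argument behind Theorem~\ref{thm:AM} on that normal form. Since the conjugating maps are tangent to the identity to order $k-1\ge 1$, the diagonal linear parts $L_n=\mathrm{diag}(a_n,b_n)$ are left unchanged, so the conjugated maps keep satisfying a uniform contraction estimate of the form \eqref{eq:uniform} (with possibly worse constants, on a possibly smaller fixed ball); hence, by the standard argument extending the germ at the origin along orbits, such a conjugacy induces a biholomorphism between the corresponding basins, and it is enough to treat the normal form.

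First I would reduce to a polynomial normal form. Writing $f_n=L_n+H_n^{(k)}+H_n^{(k+1)}+\cdots$ with $H_n^{(d)}$ homogeneous of degree $d$ (there are no terms of degree between $2$ and $k-1$, since $f_n$ has order of contact $k$), observe that for a monomial $z^iw^j$ of degree $d=i+j\ge k+1$ the divisor in the relevant cohomological equation has modulus at least $|a_n|-D^d\ge C-D^{k+1}>0$, and likewise with $b_n$ in place of $a_n$, uniformly in $n$. So, exactly as in Lemma~\ref{lemma:conjugation} — which handles the harder case where even the degree-$k$ terms are removed — the cohomological equations can be solved with uniformly bounded coefficients and the infinite composition of the resulting conjugacies converges on a fixed ball. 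This removes all terms of degree $>k$, leaving $f_n=L_n+H_n^{(k)}$ with $\|H_n^{(k)}\|$ uniformly bounded. Next I would exploit diagonality: for the first-coordinate monomial $z^iw^{k-i}e_1$ the divisor is $a_n-a_n^ib_n^{k-i}$, and when $i\ge 1$ we have $|a_n^ib_n^{k-i}/a_n|=|a_n|^{i-1}|b_n|^{k-i}\le D^{k-1}<1$ uniformly in $n$, so the associated linear recursion can be solved backwards with a uniformly bounded solution and this monomial removed; the only first-coordinate monomial that may resist is $w^ke_1$, and symmetrically the only second-coordinate monomial that may resist is $z^ke_2$. Alternating this reduction with the previous one, to absorb the degree-$(>k)$ terms it reintroduces, and passing to the limit, we arrive at the model sequence
\[
g_n(z,w)=\bigl(a_nz+\alpha_nw^k,\ b_nw+\beta_nz^k\bigr),\qquad (\alpha_n),(\beta_n)\ \text{uniformly bounded}.
\]
(When $D^k<C$ one has $|b_n^k/a_n|,\,|a_n^k/b_n|\le D^k/C<1$, so both coupling monomials would be removable too; the coupled model is only genuinely needed in the range $C\le D^k<C/D$ allowed by our hypothesis.)

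The remaining, and decisive, step is to prove $\Omega_{(g_n)}\cong\mathbb{C}^2$ for this model. Here I would follow the renormalization strategy of Theorem~\ref{thm:AM}: construct the biholomorphism onto $\mathbb{C}^2$ as a limit of suitably renormalized iterates of $(g_n)$, the renormalization encoding the degree-$k$ part and not just the linear parts. The obstructions one must control in order for this limit to converge are governed by degree-$(k+1)$ interactions, hence are of size $O(D^{(k+1)n})$ along orbits, whereas the renormalizing maps expand by $O(C^{-n})$; it is exactly the hypothesis $D^{k+1}<C$ (rather than the stronger $D^{k+\epsilon}<C$ of Theorem~\ref{thm:AM}) that makes the product summable. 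Diagonality enters through the particularly simple shape of the normal form above — one coupling monomial per coordinate — together with the elementary identity $|b_n^k/a_n|\cdot|a_n^k/b_n|=|a_nb_n|^{k-1}\le D^{2(k-1)}<1$, which shows the two couplings cannot both be strongly obstructed at once, so that over suitable blocks of iterations the model behaves like a triangular polynomial automorphism, whose basin is plainly all of $\mathbb{C}^2$. The main obstacle, and where essentially all of the work sits, is making this last point quantitative: controlling the interaction between the degree-$k$ coupling and the non-uniformity of the contraction rates $a_n,b_n$ inside the ball well enough that the renormalized iterates genuinely converge to a biholomorphism onto $\mathbb{C}^2$.
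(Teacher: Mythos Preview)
Your reduction to the normal form
\[
g_n(z,w)=(a_nz+\alpha_nw^k,\ b_nw+\beta_nz^k)
\]
is correct and coincides with what the paper does (see the remark at the beginning of Section~5.3: ``even without the condition $|a_n|\ge|b_n|$ we can always change coordinates so that $f_n(z,w)=(a_nz+c_nw^k,\ b_nw+d_nz^k)+O(k+1)$''). Note you do not need to kill the $O(k+1)$ terms exactly; since $D^{k+1}<C$, Lemma~\ref{lemma:conjugation} already says that only the $k$-jet matters.

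The gap is in the second half. You explicitly acknowledge that ``essentially all of the work sits'' in making the block argument quantitative, and you do not supply that work; what remains is a heuristic rather than a proof. Two specific points:
\begin{itemize}
\item Invoking ``the renormalization strategy of Theorem~\ref{thm:AM}'' is not enough. As the paper stresses, in \cite{AM} the intervals are chosen \emph{a priori} as $p_{j+1}=p_j+k^j$, independently of the sequence $(f_n)$, and this only yields $D^{k+\epsilon}<C$. The improvement to $D^{k+1}<C$ comes precisely from letting the intervals depend on the linear parts.
\item Your identity $|b_n^k/a_n|\cdot|a_n^k/b_n|=|a_nb_n|^{k-1}\le D^{2(k-1)}<1$ is correct but pointwise; it says nothing about which of the two couplings dominates over a long stretch, and it is the cumulative behaviour that matters.
\end{itemize}

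The paper fills exactly this gap with a three-stage construction. One introduces $\sigma_{m,n}=\log|a_{m,n}/b_{m,n}|$ and \emph{selects} trains $I_j=[p_j,p_{j+1})$ recursively so that on $I_j$ the running sum $\sigma$ stays on one side and swings by at least $k^j$ before reversing by $k^{j+1}$; this guarantees both $\sum|I_j|/k^j=\infty$ (so Lemma~\ref{lemma:sparse} applies) and a dominant coordinate ``on average'' on each train. One then \emph{directs} each train by a sequence of diagonal linear maps $l_n=\mathrm{diag}(\theta_n,\tau_n)$ with $\tau_n\le\theta_n^k$ and $\theta_n\le\tau_n^k$, which forces $|\tilde a_n|\ge|\tilde b_n|$ (or the reverse) for every $n$ in the train without blowing up the degree-$k$ coefficients. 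Finally one \emph{connects} the trains: on each train Lemma~\ref{lemma:dominant2} gives a bounded triangularising conjugacy, and the affine recursion $\alpha_n=\frac{b_n^k}{a_n}\alpha_{n+1}+\frac{c_n}{a_n}$ shows the transition maps at the interfaces $p_j$ stay uniformly bounded. Lemma~\ref{lemma:sparse} then closes the argument. This train construction is the substantive content of the theorem, and it is absent from your proposal.
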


Theorem \ref{thm:main} will be proved in section (5). The following result was proved in \cite{PW}.

\begin{thm}[Peters-Wold]\label{thm:repeat}
Let $(f_j)$ be a sequence of automorphism of $\mathbb C^m$, each with an attracting fixed point at the origin. Then there exist a sequence of integers $(n_j)$ so that the attracting basin of the sequence $(f_j^{n_j})$ is equivalent to $\mathbb C^m$.
\end{thm}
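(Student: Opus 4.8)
Before proposing an approach, note that the naive strategy of choosing the $n_j$ so as to place the sequence $(f_j^{n_j})$ under the hypotheses of Theorem~\ref{thm:wold} cannot succeed: if some $f_j$ contracts strongly near the origin, in the sense that $Df_j(0)$ has a small singular value, then no iterate $f_j^{n_j}$ satisfies a uniform lower bound $\|f_j^{n_j}(z)\|\ge C\|z\|$ with $C$ independent of $j$, and this is exactly the degeneracy behind Forn{\ae}ss' counterexample (Theorem~\ref{thm:short}). One is therefore forced to argue by hand, and the plan is to run, for the non-autonomous sequence, the limiting construction underlying the Sternberg--Rosay--Rudin theorem, using the freedom in the $n_j$ to force all the estimates to close. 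By the Sternberg--Rosay--Rudin theorem we may also assume from the outset that $\Omega_{f_j}=\mathbb C^m$ for every $j$.

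The first step is a local preparation. For each $j$ the differential $L_j:=Df_j(0)$ has spectrum in the open unit disc, so $f_j$ is a contraction on a small enough ball $B_{\delta_j}$; moreover an elementary computation shows that every homogeneous component of degree $d\ge 2$ of the Taylor expansion of $f_j^{\,n}$ at $0$ tends to $0$ as $n\to\infty$, the relevant sums being dominated by geometric series in $\|L_j^{\,i}\|$. Hence, taking $n_j$ large, $g_j:=f_j^{n_j}$ is arbitrarily close near $0$ to its linear part $\Lambda_j:=L_j^{n_j}$, and by the Schwarz lemma applied to $f_j^{n_j}\colon B_{\delta_j}\to B_{\epsilon_j}$ it contracts $B_{\delta_j}$ by a factor less than any prescribed $\epsilon_j/\delta_j$. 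One then fixes radii $r_j\downarrow 0$ with $r_{j+1}\le\min(\delta_{j+1},r_j/2)$ and $g_j(\overline{B_{r_j}})\subset B_{r_{j+1}}$, and, writing $\Phi_N:=g_N\circ\cdots\circ g_0$ (a global automorphism of $\mathbb C^m$), obtains $\Omega:=\Omega_{(g_j)}\supseteq\bigcup_N\Phi_N^{-1}(B_{r_{N+1}})$, an increasing union of domains each biholomorphic to a ball. Since the $\Phi_N$ are onto, $\Phi_N^{-1}(B_{r_{N+1}})$ can be made to contain any prescribed compactum by choosing the later exponents large enough, so one arranges in addition that this union is all of $\Omega$ and exhausts $\mathbb C^m$.

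Next one sets up the model and the candidate uniformizing map. Conjugate $g_j$ near $0$ to its Poincar\'e--Dulac normal form $N_j$ by a local biholomorphism $\sigma_j$ with $\sigma_j(0)=0$ and $D\sigma_j(0)=\mathrm{id}$; since we are in the Poincar\'e domain, the conjugacy converges. The $N_j$ are polynomial automorphisms whose resonant monomials are upper triangular for the natural filtration, and exactly as in the one-map case the basin $\Omega_{(N_j)}$ of the model sequence is all of $\mathbb C^m$. Put $\mathcal N_N:=N_N\circ\cdots\circ N_0$ and, on $\Phi_N^{-1}(B_{r_{N+1}})$,
\be
\Psi_N:=\mathcal N_N^{-1}\circ\sigma_N\circ\Phi_N .
\ee
A short computation using $\sigma_{N+1}\circ g_{N+1}=N_{N+1}\circ\sigma_{N+1}$ gives the telescoping ratio $\Psi_{N+1}\circ\Psi_N^{-1}=\mathcal N_N^{-1}\circ(\sigma_{N+1}\circ\sigma_N^{-1})\circ\mathcal N_N$, and the construction succeeds once this ratio converges to the identity fast enough: then $\Psi:=\lim_N\Psi_N\colon\Omega\to\mathbb C^m$ is a biholomorphism onto the model basin $\mathbb C^m$. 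Because $\sigma_j$ equals the identity plus a nonlinear term of size comparable to the nonlinearity of $g_j$ --- which is tiny once $n_j$ is large --- one has $\sigma_{N+1}\circ\sigma_N^{-1}=\mathrm{id}+E_N$ with $E_N=O(\|z\|^2)$ and small coefficients, so everything reduces to controlling how much conjugation by the strong contraction $\mathcal N_N$ (whose linear part is $L_N^{n_N}\circ\cdots\circ L_0^{n_0}$, of tiny norm) amplifies $E_N$.

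The main obstacle is exactly this last estimate, and the difficulty is a genuine tension: enlarging $n_j$ is what makes the nonlinear terms, hence $E_N$, small, but it simultaneously spreads the moduli of the eigenvalues of $\Lambda_j=L_j^{n_j}$, so the condition numbers of the iterated linear parts grow without bound, and a crude bound for $\mathcal N_N^{-1}\circ E_N\circ\mathcal N_N$ loses precisely those condition numbers. To get past this one must use the normal form in an essential way, so that the only monomials of $E_N$ that the conjugation can amplify are the (near-)resonant ones, whose amplification is governed by small-divisor quotients of powers of the eigenvalues of the $L_j$; and one must choose the triples $(n_j,r_j,\epsilon_j)$ by a careful induction in which each $n_j$ is taken large enough that the nonlinearity of $g_j$ beats the accumulated condition numbers of $\Lambda_0,\dots,\Lambda_{j-1}$, while the radii $r_j$ are kept from shrinking so fast that the rescaled transition maps stop being embeddings. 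Making these choices mutually consistent, one at a time, is the heart of the matter; granting it, the remainder is the Sternberg--Rosay--Rudin machinery carried over to a non-autonomous sequence.
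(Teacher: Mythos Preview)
The paper does not give its own proof of Theorem~\ref{thm:repeat}; it is quoted from \cite{PW} and only the guiding idea (``if long stretches in the sequence behave similarly, then the basin is $\mathbb C^m$'') is recalled. So there is nothing to compare your argument against here. That said, your proposal has a genuine gap that would have to be repaired before it could stand on its own.

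The step that fails is the sentence ``$\sigma_j$ equals the identity plus a nonlinear term of size comparable to the nonlinearity of $g_j$ --- which is tiny once $n_j$ is large''. It is true that the higher-order Taylor coefficients of $g_j=f_j^{n_j}$ tend to zero as $n_j\to\infty$; but the Poincar\'e--Dulac conjugacy divides those coefficients by small divisors built from the eigenvalues of $\Lambda_j=L_j^{n_j}$, and these divisors shrink at exactly the same rate. Concretely, for a quadratic monomial with multi-index $(\alpha,\beta)$ in the $k$-th component, the coefficient in $g_j$ behaves like $c\,\lambda_k^{\,n_j}$ (when $|\lambda_\alpha\lambda_\beta|<|\lambda_k|$), while the corresponding small divisor $(\lambda_\alpha\lambda_\beta)^{n_j}-\lambda_k^{\,n_j}$ also behaves like $-\lambda_k^{\,n_j}$; their quotient is of order $1$, independent of $n_j$. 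This is no accident: since $f_j$ and $f_j^{n_j}$ commute, the normalizing map for $f_j$ already normalizes $f_j^{n_j}$, so one may take $\sigma_j$ to be the conjugacy for $f_j$ itself, which does not move as $n_j$ grows. Consequently $E_N=\sigma_{N+1}\circ\sigma_N^{-1}-\mathrm{id}$ has no reason to be small, and the inductive scheme you describe --- ``take $n_j$ large enough that the nonlinearity of $g_j$ beats the accumulated condition numbers'' --- has no leverage: increasing $n_j$ does not shrink $E_N$ at all, while it does worsen the condition number of $\mathcal N_N$. The tension you correctly identify in your last paragraph is therefore not resolved by your construction.

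The approach actually used in \cite{PW}, and implicit in the discussion around Lemma~\ref{lemma:sparse} and Equation~\eqref{eq:AM}, works differently: one uses for each $f_j$ the \emph{global} Rosay--Rudin conjugacy $\phi_j$ to a lower-triangular polynomial $G_j$ on the whole basin $\Omega_{f_j}$ (not a local Poincar\'e--Dulac germ), so that on the $j$-th stretch the sequence is genuinely conjugate to iterates of a triangular map. The freedom in $n_j$ is then spent not on making the conjugacies small, but on making the stretches long enough that the finitely many ``bad'' transition maps $\phi_{j+1}\circ\phi_j^{-1}$ inserted between stretches can be absorbed --- exactly the mechanism of Lemma~\ref{lemma:sparse}. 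If you want to salvage your outline, replace the $\sigma_j$ by these global conjugacies and shift the role of $n_j$ accordingly.
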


The idea that appeared in Theorem \ref{thm:repeat} will also be important in the proof of Theorem \ref{thm:main}: if long stretches in the sequence $(f_n)$ behave similarly, then the basin will be biholomorphic to $\mathbb C^m$. What is meant by similarly will be made clear in later sections.

The next result, from \cite{Peters}, will be important to us not so much for the statement itself but for the ideas used in the proof. See also the more technical Lemma \ref{lemma:dominant}, which will be discussed in the next section.

\begin{thm}[Peters]\label{thm:perturbations}
Let $F$ be an automorphism of $\mathbb C^m$. Then there exist an $\epsilon > 0$ so that for any sequence $(f_n)$ of automorphisms of $\mathbb C^m$ which all fix the origin and satisfy $\|f_n - F\|_{\mathbb B} < \epsilon$ for all $n$, one has that $\Omega_{(f_n)} \cong \mathbb C^m$.
\end{thm}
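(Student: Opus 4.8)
We may assume $F(0)=0$ with $L:=DF(0)$ of spectral radius less than $1$: the hypotheses $f_n(0)=0$ and $\|f_n-F\|_{\mathbb B}<\epsilon$ force $F$ to have a unique attracting fixed point within $O(\epsilon)$ of the origin, which we conjugate to $0$, and if $L$ is not contracting then $\Omega_{(f_n)}$ is not an open set of full dimension and there is nothing to prove. The first step is to fix a norm adapted to $L$ and a radius $\rho>0$ such that, for $\epsilon$ small, every automorphism $f$ with $f(0)=0$ and $\|f-F\|_{\mathbb B}<\epsilon$ satisfies $C\|z\|\le\|f(z)\|\le D\|z\|$ on $\mathbb B_\rho$, with $1>D>C>0$ depending only on $F$. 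Then $(f_n)$ is a uniformly contracting sequence as in Conjecture \ref{conj:main}, and with $g_N:=f_N\circ\cdots\circ f_0$ one has $\Omega_{(f_n)}=\bigcup_N g_N^{-1}(\mathbb B_\rho)$; reading this through the charts $g_N\colon g_N^{-1}(\mathbb B_\rho)\xrightarrow{\sim}\mathbb B_\rho$ exhibits $\Omega_{(f_n)}$ as the increasing union of the system $\mathbb B_\rho\xrightarrow{f_1}\mathbb B_\rho\xrightarrow{f_2}\cdots$, the inclusion $g_N^{-1}(\mathbb B_\rho)\hookrightarrow g_{N+1}^{-1}(\mathbb B_\rho)$ becoming $f_{N+1}|_{\mathbb B_\rho}$.

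Next we would compare this with the autonomous picture. The basin $\Omega_F=\bigcup_N F^{-N}(\mathbb B_\rho)$ is the same kind of increasing union, now with every transition equal to $F$, and it is biholomorphic to $\mathbb C^m$ by the theorem of Sternberg and Rosay--Rudin; fix a biholomorphism $\Phi\colon\Omega_F\to\mathbb C^m$ conjugating $F$ to a polynomial normal form $P$ on all of $\mathbb C^m$ (lower-triangular, and equal to $L$ when $L$ is non-resonant). Since $\overline{\mathbb B_\rho}\subset\Omega_F$, we may transport the $(f_n)$-exhaustion by $\Phi$: the basin $\Omega_{(f_n)}$ becomes the increasing union of $\Omega_0\xrightarrow{\tilde f_1}\Omega_0\xrightarrow{\tilde f_2}\cdots$, where $\Omega_0:=\Phi(\mathbb B_\rho)$ is a fixed bounded neighbourhood of $0$ and each $\tilde f_n:=\Phi\circ f_n\circ\Phi^{-1}$ maps $\Omega_0$ into itself, fixes $0$, and lies within $O(\epsilon)$ of $P|_{\Omega_0}$ — here it is essential that $\Phi$ is a \emph{single fixed} biholomorphism, so that $\epsilon$-close maps stay $O(\epsilon)$-close after conjugation on the compact $\overline{\Omega_0}$. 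Correspondingly $\mathbb C^m$ is the increasing union of $\Omega_0\xrightarrow{P}\Omega_0\xrightarrow{P}\cdots$, because $\bigcup_N P^{-N}(\Omega_0)=\mathbb C^m$. This reduces matters to a fixed explicit model $P$, a common chart domain $\Omega_0$, and a residual discrepancy of size $O(\epsilon)$.

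The final step would be to produce the biholomorphism $\Omega_{(f_n)}\to\mathbb C^m$ as a limit $\Phi_\infty=\lim_N\psi_N$ in the manner of Rosay--Rudin: start with $\psi_0:=\Phi\circ g_0$ and pass from $\psi_{N-1}$ to $\psi_N$ by precomposing with a small correction designed to absorb the $O(\epsilon)$ difference between $\tilde f_N$ and $P$, arranging that $\|\psi_N-\psi_{N-1}\|\to 0$ uniformly on compact subsets of $\Omega_{(f_n)}$ while the images $\psi_N\bigl(g_N^{-1}(\mathbb B_\rho)\bigr)$ increase to fill $\mathbb C^m$ (using $P^{-N}(\Omega_0)\uparrow\mathbb C^m$). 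A quantitative fact that makes this plausible is that along any long stretch the composition $f_{N+\ell}\circ\cdots\circ f_{N+1}$ differs from the iterate $F^\ell$ by only $O(\epsilon\,\ell\,D^\ell)$ on $\mathbb B_\rho$ — superexponentially little — which is the effective form of the ``long similar stretches'' mechanism already used in Theorem \ref{thm:repeat}, and which should make the successive corrections summable.

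The hard part is carrying out this correction step, and the obstacle is precisely the one that keeps Conjecture \ref{conj:main} open and makes Theorems \ref{thm:wold}, \ref{thm:AM} and \ref{thm:main} progressively deeper: once $D^k\ge C$ there is \emph{no} uniform local conjugacy removing the nonlinear terms of $P$ (the relevant non-autonomous cohomological equations are unstable in both time directions), so the corrections cannot conjugate the nonlinearity away — they must cancel only the $\epsilon$-discrepancy while leaving the genuinely nonlinear model $P$ in place. The leverage we have over the general conjecture is exactly that all the maps here are perturbations of a \emph{single} $F$, hence have the same nonlinear part up to $\epsilon$ along the entire sequence; converting this into corrections that converge — with derivative bounds that do not deteriorate along the infinite composition and with all constants uniform in the sequence $(f_n)$, so that one $\epsilon=\epsilon(F)$ works for all of them — is the technical heart of the argument, and is what the estimates behind Lemma \ref{lemma:dominant} are designed to deliver.
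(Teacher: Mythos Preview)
The paper does not prove this theorem --- it is quoted from \cite{Peters} --- but it does indicate the method: the ideas behind Lemma~\ref{lemma:dominant} together with the non-autonomous conjugation scheme of Lemma~\ref{lemma:conjugation}. Against that benchmark, your proposal is not a proof but a discussion of the setup followed by an explicit concession that the ``hard part'' is left undone. Your final paragraph correctly names Lemma~\ref{lemma:dominant} as the relevant tool, but you never invoke it; instead you describe a vague scheme of successive ``corrections'' $\psi_N$ whose construction is not specified.

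There is also a substantive misdiagnosis. You frame the obstacle as ``once $D^k\ge C$ there is no uniform local conjugacy removing the nonlinear terms of $P$'', invoking the difficulty that drives Theorems~\ref{thm:AM} and~\ref{thm:main}. But in the perturbed setting $D$ can be taken arbitrarily close to the spectral radius of $L=DF(0)<1$ and $C$ close to the smallest eigenvalue modulus, so for $k$ large one \emph{does} have $D^{k+1}<C$; the genuine issue is not the tail but whether one can conjugate away (or into lower-triangular form) the terms of degrees $2,\ldots,k$ with \emph{uniformly bounded} conjugating maps $h_n$. That is exactly what the argument in \cite{Peters} supplies: because every $Df_n(0)$ lies in a small neighbourhood of the fixed matrix $L$, the eigenvalue ratios $\lambda^\alpha/\lambda_j$ governing the non-autonomous cohomological equations are uniformly bounded away from $1$ on the non-resonant multi-indices (and the resonant ones are absorbed into the lower-triangular $g_n$), so the recursions for the coefficients of $h_n$ have uniformly bounded solutions. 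One then applies Lemma~\ref{lemma:conjugation} with lower-triangular polynomial $g_n$ and concludes $\Omega_{(f_n)}\cong\Omega_{(g_n)}=\mathbb C^m$.

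Your preliminary reduction via the single autonomous $\Phi$ is harmless but unnecessary: the non-autonomous conjugation applies directly to the $f_n$, and pre-conjugating by $\Phi$ neither creates nor removes any difficulty. What is missing from your write-up is precisely the step the paper isolates as Lemma~\ref{lemma:dominant} (in its general-$m$ form): construct the bounded $(h_n)$ and lower-triangular $(g_n)$ explicitly, using that the linear parts are all $\epsilon$-close to $L$.
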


\subsection{Applications}

The results described above have been used to prove a number of results which are at first sight unrelated. The first example of such an application is of course the classical result of Fatou and Bieberbach which states that there exists a proper subdomain of $\mathbb C^2$ which is biholomorphic to $\mathbb C^2$. Indeed, it is not hard to find an automorphism of $\mathbb C^2$ with an attracting fixed point, but whose basin of attraction is not equal to the entire $\mathbb C^2$. Proper subdomains of $\mathbb C^2$ that are equivalent to $\mathbb C^2$ are now called \emph{Fatou-Bieberbach} domains.

It should be no surprise that for the construction of Fatou-Bieberbach domains with specific properties, it is useful to work with non-autonomous basins. Working with sequences of maps gives much more freedom than working with a single automorphism. Using Theorem \ref{thm:wold} it is fairly easy to construct a sequence of automorphisms satisfying various global properties, while making sure that the attracting basin is equivalent to $\mathbb C^2$. We give two examples of results that have been obtained in this way.

\begin{thm}[Wold]
There exist a Fatou-Bieberbach domain $\Omega \subset \mathbb C^2$ which is dense in $\mathbb C^2$.
\end{thm}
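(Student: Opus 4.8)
The plan is to realize the desired domain as a non-autonomous basin $\Omega_{(f_n)}\subset\mathbb{C}^2$ and to apply Theorem~\ref{thm:wold} with $k=2$ --- for which the hypothesis ``order of contact $2$'' is automatic, being just Taylor's formula --- in order to conclude $\Omega_{(f_n)}\cong\mathbb{C}^2$; the freedom available in the choice of the sequence $(f_n)$ is then used simultaneously to force $\Omega_{(f_n)}$ to contain a prescribed dense set and to force it to omit a prescribed point, so that it is a \emph{proper} subdomain. Fix $\lambda\in(0,1)$ (say $\lambda=\tfrac12$) and a small $\delta>0$ with $(\lambda+\delta)^2<\lambda-\delta$, which is possible since $\lambda^2<\lambda$; set $C=\lambda-\delta$ and $D=\lambda+\delta$. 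Every $f_n$ will be an automorphism of $\mathbb{C}^2$ with $f_n(0)=0$ that is $C^0$-close, on a fixed neighbourhood of $\overline{\mathbb{B}}$, to the linear contraction $z\mapsto\lambda z$; applying the Schwarz lemma to $f_n(z)-\lambda z$ (which vanishes at the origin) then gives $C\|z\|\le\|f_n(z)\|\le D\|z\|$ on $\mathbb{B}$, so that the hypotheses of Theorem~\ref{thm:wold} are satisfied. In particular every such $f_n$ maps $\mathbb{B}$ into $D\,\mathbb{B}\subset\mathbb{B}$, so $\mathbb{B}$ lies in the attracting basin of every tail of the sequence.

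Fix a countable dense set $\{q_j\}_{j\ge1}\subset\mathbb{C}^2$ and a point $w_0$ with $\|w_0\|>1$ and $w_0\notin\{q_j\}$. We build the maps $f_n=g_n$ by induction, maintaining $\|F_n(w_0)\|>99$ for every $n\ge1$, where $F_n:=g_n\circ\cdots\circ g_1$. Given $g_1,\dots,g_{n-1}$, set $p:=F_{n-1}(q_n)$ and $v:=F_{n-1}(w_0)$; then $v\notin\overline{\mathbb{B}}$ (since $\|w_0\|>1$ if $n=1$ and $\|v\|>99$ if $n\ge2$) and $p\ne v$ (as $F_{n-1}$ is an automorphism and $q_n\ne w_0$). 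We take $g_n$ to be an automorphism of $\mathbb{C}^2$ with the following properties: (i) $g_n(0)=0$ and $g_n$ is $C^0$-close to $z\mapsto\lambda z$ on a fixed neighbourhood of $\overline{\mathbb{B}}$; (ii) $\|g_n(v)\|>99$; and (iii) $g_n(p)\in\mathbb{B}$ --- achieved by moving $p$ into $\mathbb{B}$ if $p$ lies outside a fixed, slightly larger ball, while if $p$ is already that close then $p$ is carried into $\mathbb{B}$ by the near-contraction in (i). Such a $g_n$ exists by the Anders\'en-Lempert theorem with interpolation (in the form due to Forstneri\v{c} and Rosay): one takes as model the map that equals $z\mapsto\lambda z$ on a neighbourhood of $\overline{\mathbb{B}}$ and equals suitable affine maps on small, pairwise-disjoint balls about $p$ and $v$; this is a biholomorphism between polynomially convex open sets which is isotopic to the identity through such maps (one routes the centres of the off-origin balls so that all the images stay pairwise disjoint along the isotopy), hence it can be approximated, uniformly on compact sets, by automorphisms of $\mathbb{C}^2$ fixing the origin.

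Now put $f_n=g_n$. By Theorem~\ref{thm:wold} we get $\Omega:=\Omega_{(f_n)}\cong\mathbb{C}^2$; in particular $\Omega$ is a subdomain of $\mathbb{C}^2$. By (iii), $F_n(q_n)=g_n(p)\in\mathbb{B}$ for every $n$, and since $\mathbb{B}$ lies in the attracting basin of the tail $(f_m)_{m>n}$ we obtain $F_m(q_n)\to0$ as $m\to\infty$, i.e.\ $q_n\in\Omega$; hence $\{q_j\}\subset\Omega$ and $\Omega$ is dense in $\mathbb{C}^2$. On the other hand, by (ii), $\|F_n(w_0)\|=\|g_n(v)\|>99$ for every $n$, so $F_m(w_0)\not\to0$ and therefore $w_0\notin\Omega$; thus $\Omega$ is a proper subset of $\mathbb{C}^2$. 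We conclude that $\Omega$ is a proper subdomain of $\mathbb{C}^2$, dense in $\mathbb{C}^2$ and biholomorphic to $\mathbb{C}^2$, i.e.\ a dense Fatou-Bieberbach domain.

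The crux of the argument is the Anders\'en-Lempert step: one has to check carefully that the spliced model --- the contraction $z\mapsto\lambda z$ near the origin, glued to affine maps near the far-away points $p$ and $v$ --- really is a biholomorphism between polynomially convex Runge domains and really is isotopic to the identity within that class (in particular that the ball centres can be routed so as to keep all images pairwise disjoint along the isotopy), so that the Anders\'en-Lempert/Forstneri\v{c}-Rosay approximation applies and yields an automorphism $g_n$ of $\mathbb{C}^2$ with $g_n(0)=0$ arbitrarily close to the model; one then has to combine this $C^0$-approximation with the Schwarz estimate to recover the two-sided bound $C\|z\|\le\|f_n(z)\|\le D\|z\|$ required by Theorem~\ref{thm:wold}. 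The device of pushing $w_0$ out to norm greater than $99$ at every stage is exactly what excludes the degenerate possibility $\Omega=\mathbb{C}^2$ --- which genuinely occurs, for instance for the constant sequence $f_n(z)=\lambda z$ --- and it is the non-autonomous counterpart of arranging, in the classical one-map construction, that the Fatou-Bieberbach domain omit a prescribed point.
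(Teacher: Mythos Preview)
The paper does not actually prove this theorem; it is quoted as a result of Wold and offered as an illustration of the sentence ``Using Theorem~\ref{thm:wold} it is fairly easy to construct a sequence of automorphisms satisfying various global properties, while making sure that the attracting basin is equivalent to $\mathbb{C}^2$.'' Your sketch carries out exactly this program --- a non-autonomous basin with $D^2<C$, each $f_n$ built via Anders\'en--Lempert approximation so as to swallow one more point of a dense sequence while pushing a fixed witness $w_0$ away --- and is correct in outline; it also matches Wold's own argument.

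Two small points worth tightening. For the Schwarz step you want $\|f_n(z)-\lambda z\|\le\delta\|z\|$ on $\mathbb{B}$; this follows once $f_n-\lambda\,\mathrm{Id}$ vanishes at $0$ and maps a fixed ball $R\mathbb{B}$ ($R>1$) into $\epsilon\mathbb{B}$, since then Schwarz gives the factor $\epsilon/R$. So be explicit that the $C^0$-approximation in (i) is taken on $R\mathbb{B}$, not just on $\overline{\mathbb{B}}$. Second, the interpolation $g_n(0)=0$ must be exact, not approximate, for the Schwarz argument to apply; the Forstneri\v{c}--Rosay version of Anders\'en--Lempert that you invoke does allow prescribing finitely many values, so this is fine, but it should be stated. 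With these two clarifications your case split on the location of $p$ (inside the fixed enlarged ball versus far away) works, and the rest of the argument goes through.
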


\begin{thm}[Peters-Wold]
There exists a Fatou-Bieberbach domain $\Omega$ in $\mathbb C^2$ whose boundary has Hausdorff dimension $4$, and even positive $4$-dimensional Lebesgue measure.
\end{thm}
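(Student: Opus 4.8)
The plan is to realise $\Omega$ as a non-autonomous basin $\Omega_{(f_n)}$ in which all the $f_n$ are uniformly close on a fixed ball to one linear contraction --- so that Theorem \ref{thm:wold} makes $\Omega\cong\mathbb C^2$ automatic --- while using the complete freedom one has in prescribing the $f_n$ \emph{away} from that ball to force $\partial\Omega$ to contain a set of positive $4$-dimensional Lebesgue measure. Fix $\rho>1$, a small $r\in(0,1)$, and $\mu\in(0,1)$ with $\mu\rho<r$, and put $\ell=\mu\,\mathrm{id}$. We take $f_n=h_n\circ\ell$, where each $h_n$ is an automorphism of $\mathbb C^2$ fixing $0$ which is $C^1$-close enough to the identity on $\overline{\mathbb B_{\mu\rho}}=\ell(\overline{\mathbb B_\rho})$ that $f_n$ satisfies \eqref{eq:uniform} on $\mathbb B$ with constants $C$ and $D$ so close to $\mu$ that $D^2<C$; by Theorem \ref{thm:wold} (with order of contact $k=2$, which holds automatically for automorphisms fixing $0$) the basin $\Omega_{(f_n)}$ is biholomorphic to $\mathbb C^2$, no matter what the $h_n$ do on $\mathbb C^2\setminus\overline{\mathbb B_\rho}$. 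Since each $f_m$ maps $\overline{\mathbb B_\rho}$ into a small neighbourhood of $\overline{\mathbb B_{\mu\rho}}$, and in particular $\mathbb B_r$ into $\mathbb B_r$, the ball $\mathbb B_r$ is forward invariant; thus $\Omega:=\Omega_{(f_n)}=\bigcup_n\Phi_n^{-1}(\mathbb B_r)$, with $\Phi_n=f_n\circ\cdots\circ f_0$, and a point $z$ lies in $\Omega$ if and only if its orbit $(\Phi_n(z))_n$ eventually enters $\mathbb B_r$.

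Now choose, inside $\mathbb C^2\setminus\overline{\mathbb B_\rho}$, a \emph{fat Cantor set}: a decreasing sequence $L_0\supseteq L_1\supseteq\cdots$ of compacta with $L_{j+1}\subset\operatorname{int}L_j$, the pieces of $L_j$ having diameters tending to $0$, and $\mathrm{Leb}_4(L_j)\searrow\mathrm{Leb}_4(K)>0$, where $K:=\bigcap_j L_j$; then $K$ is a totally disconnected compact set of positive $4$-measure and empty interior. Fix also a countable set $Q=\{q_1,q_2,\dots\}\subset(\mathbb C^2\setminus\overline{\mathbb B_\rho})\setminus K$ with $K\subseteq\overline Q$ (for each point of a countable dense subset of $K$ and each integer $m\ge1$, include a point off $K$ within distance $1/m$; this is possible as $K$ has empty interior). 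The $h_n$ will be constructed inductively so that, for suitable non-decreasing integers $j(n)\to\infty$ and for every $n$: (i) $\Phi_n(L_{j(n)})\cap\overline{\mathbb B_\rho}=\emptyset$ --- so in particular $\Phi_n(K)\cap\mathbb B_r=\emptyset$ for all $n$, whence no orbit of a point of $K$ enters $\mathbb B_r$ and $K\cap\Omega=\emptyset$; (ii) $\Phi_n(q_\ell)\in\mathbb B$ for all $\ell\le n$ --- so the orbit of each $q_\ell$ is eventually trapped in $\mathbb B$, converges to $0$, and $q_\ell\in\Omega$, whence $K\subseteq\overline Q\subseteq\overline\Omega$. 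Conditions (i) and (ii) give $K\subseteq\partial\Omega$, so $\Omega\neq\mathbb C^2$ is a Fatou--Bieberbach domain with $\mathrm{Leb}_4(\partial\Omega)\ge\mathrm{Leb}_4(K)>0$ and, a fortiori, $\dim_H\partial\Omega=4$. It remains to carry out the induction.

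At step $n$ we must pick $h_n$, equivalently $f_n=h_n\circ\ell$, which is forced to be $\approx\ell$ on $\overline{\mathbb B_\rho}$ but is unconstrained on $\mathbb C^2\setminus\overline{\mathbb B_\rho}$. The points $\Phi_{n-1}(q_\ell)\in\mathbb B$ with $\ell<n$ and the set $\Phi_{n-1}(L_{j(n)})\subseteq\Phi_{n-1}(L_{j(n-1)})$, the latter disjoint from $\overline{\mathbb B_\rho}$ by (i) at step $n-1$, need only be respected: $f_n$ automatically contracts the former (they stay in $\mathbb B$, indeed eventually in $\mathbb B_r$), and we merely have to keep $\Phi_{n-1}(L_{j(n)})$ from being folded back into $\overline{\mathbb B_\rho}$. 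If $\Phi_{n-1}(q_n)$ already lies in $\overline{\mathbb B_\rho}$, then $f_n$ (being $\approx\ell$ there) sends it into $\mathbb B$ and (ii) holds at step $n$ with nothing more to do. Otherwise $\Phi_{n-1}(q_n)$ lies outside $\overline{\mathbb B_\rho}$, at positive distance from $\Phi_{n-1}(L_{j(n)})$ (as $q_n\notin L_{j(n)}$ once $j(n)$ is large, $\Phi_{n-1}$ being a homeomorphism), and we invoke Andersén--Lempert theory (see \cite{RR}): we produce an automorphism of $\mathbb C^2$, $C^1$-close to $\ell$ on $\overline{\mathbb B_\rho}$, that carries $\Phi_{n-1}(q_n)$ into $\mathbb B$ while keeping $\Phi_{n-1}(L_{j(n)})$ outside $\overline{\mathbb B_\rho}$, and take it for $f_n$; this secures (i) and (ii) at step $n$. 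Interleaving along the sequence the two tasks ``activate the next $q_n$'' and ``raise $j(n)$ by one, choosing the new level $L_j$ inside $\operatorname{int}L_{j-1}$ with $\mathrm{Leb}_4(L_j)/\mathrm{Leb}_4(L_{j-1})$ close to $1$'' completes the construction.

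The heart of the matter is this inductive step and its bookkeeping. With only an automorphism pinned down near $\overline{\mathbb B_\rho}$ available, one must simultaneously draw a single new approach point's orbit into the attracting region and keep the orbit of an entire shrinking neighbourhood $L_{j(n)}$ of $K$ out of it --- and this at \emph{all} stages together, so that the nested intersection $K=\bigcap_j L_j$ is never engulfed while every $q_n$ still gets activated; leaving enough room in the gaps of the $L_j$ and tuning the rate at which $j(n)\to\infty$ is the ``onion'' part of the argument. On the analytic side, the delicate point in each Andersén--Lempert step is that the relevant compacta --- the ball, the image $\Phi_{n-1}(L_{j(n)})$, and the path along which $\Phi_{n-1}(q_n)$ is moved --- must be kept polynomially convex and mutually disjoint throughout the required isotopy; since $\Phi_{n-1}$ is highly distorting this is not automatic, and one arranges it by choosing the levels $L_j$ adaptively (so that $\Phi_{n-1}(L_{j(n)})$ is polynomially convex at the stage it is introduced) and routing the isotopies accordingly. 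By contrast, that the resulting $\Omega$ is genuinely biholomorphic to $\mathbb C^2$, rather than merely an open set avoiding $K$, costs nothing: it is exactly Theorem \ref{thm:wold}, the $f_n$ having been built to satisfy \eqref{eq:uniform} with $D^2<C$.
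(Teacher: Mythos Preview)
The paper does not actually prove this theorem: it is stated without proof in the ``Applications'' subsection of Section~2 as a result quoted from \cite{PW}. So there is no argument in the present paper to compare yours against.

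That said, your outline is along the correct lines and matches the strategy of the original Peters--Wold construction: realise $\Omega$ as a non-autonomous basin whose maps satisfy \eqref{eq:uniform} with $D^2<C$ on a fixed ball (so that Theorem~\ref{thm:wold} yields $\Omega\cong\mathbb C^2$ automatically), and use Anders\'en--Lempert approximation to prescribe the $f_n$ freely away from that ball so as to keep a fat Cantor set $K$ out of $\Omega$ while pulling a countable set $Q$ with $K\subset\overline Q$ into $\Omega$, forcing $K\subset\partial\Omega$. Your identification of the two genuinely delicate points --- the polynomial convexity of the evolving compacta $\Phi_{n-1}(L_{j(n)})$ together with the ball and the path for $\Phi_{n-1}(q_n)$, and the bookkeeping governing how fast $j(n)\to\infty$ --- is accurate; in the original paper this is precisely where the work is concentrated, and your write-up is an outline rather than a proof at those points.

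Two small corrections. The reference \cite{RR} is not the right source for the approximation step; what is needed is the Anders\'en--Lempert theorem (in the Forstneri\v{c}--Rosay form), not Rosay--Rudin. And the parenthetical ``order of contact $k=2$, which holds automatically for automorphisms fixing $0$'' is fine but slightly beside the point: Wold's original result (the case $D^2<C$) does not require any order-of-contact hypothesis beyond the maps being tangent to a linear contraction at $0$, so you need not invoke Theorem~\ref{thm:wold} in its $k$-contact form at all.
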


Here we note that Fatou-Bieberbach domains with Hausdorff dimension equal to to any $h \in (3,4)$ were constructed by Wolf \cite{Wolf} using autonomous attracting basins. Hausdorff dimension $3$ (and in fact $C^\infty$-boundary) was obtained earlier by Stens{\o}nes in \cite{Stensones}, who also used an iterative procedure involving a sequence of automorphisms of $\mathbb C^2$.

The last application we would like to mention is the Loewner partial differential equation. The link with non-autonomous attracting basins was made in \cite{Arosio}, where Arosio used a construction due to Forn{\ae}ss and Stens{\o}nes (see Theorem \ref{thm:FS} below) to prove the existence of solutions to the Loewner PDE. See also \cite{ABW} for the relationship between the Loewner PDE and non-autonomous attracting basins.

\section{Tools and techiniques}

\subsection{The autonomous case}

Essentially the only available method for proving that a domain $\Omega$ is equivalent to $\mathbb C^m$, is by actually constructing the biholomorphic map from $\Omega$ to $\mathbb C^m$. Let us first review how this is done for autonomous basins, before we look at how this proof can be adapted to the non-autonomous setting. We follow the proof in the appendix of \cite{RR}, but see also the survey \cite{Berteloot} written by Berteloot. Given an automorphism $F$ of $\mathbb C^m$ with an attracting fixed point at the origin, one can, for any $k \in \mathbb N$, find polynomial maps $X_k$ of the form $X_k = \mathrm{Id} + h.o.t.$, biholomorphic in a neighborhood of the origin, and a \emph{lower triangular polynomial map} $G$ so that
\begin{equation}\label{eq:conjugate}
X_k \circ F = G \circ X_k + O(\|z\|^{k+1}).
\end{equation}
Here a polynomial map $G = (G_1, \ldots , G_m)$ is called \emph{lower triangular} if $G_i = \lambda_i z_i + H_i(z_1, \ldots z_{i-1})$ for $i= 1, \ldots , m$. Now consider the sequence of holomorphic maps  from $\Omega_F$ to $\mathbb C^m$ given by
$$
\Phi_n = G^{-n} \circ X_k\circ  F^n.
$$
If $k$ is chosen sufficiently large then it follows from Equation \eqref{eq:conjugate} that the maps $\Phi_n$ converge, uniformly on compact subsets of $\Omega_F$, to a biholomorphic map from $\Omega_F$ to $\mathbb C^m$. Important here is that the lower triangular polynomial maps in many ways behave as linear maps. For example, it follows immediately by induction on $m$ that the degrees of the iterates $G^n$ are uniformly bounded. One can also easily see that the basin of attraction of a lower triangular map with an attracting fixed point at the origin is always equal to the whole set $\mathbb C^m$. As we will see below, the fact that $F$ and $G$ are conjugate as jets of sufficiently high degree implies that their basins are equivalent, and hence the basin of the map $F$ is equivalent to $\mathbb C^m$.

\subsection{Non-autonomous Conjugation}

In the non-autonomous setting it is very rare that a single change of coordinates simplifies the sequence of maps. Instead we use a sequence of coordinate changes.

\begin{lemma}\label{lemma:conjugation}
Let $(f_n)$ be a sequence of automorphisms that satisfies the hypotheses of Conjecture \ref{conj:main}, and suppose that there exist uniformly bounded sequences $(g_n)$ and $(h_n)$, with $h_n = \mathrm{Id} + h.o.t.$, such that the diagram
\begin{equation}\label{diagram}
\begin{CD}
\mathbb{C}^m @>f_0>> \mathbb{C}^m @>f_1>> \mathbb{C}^m @>f_2>> \cdots\\
@VVh_0V @VVh_1V @VVh_2V\\
\mathbb{C}^m @>g_0>> \mathbb{C}^m @>g_1>> \mathbb{C}^m @>g_2>> \cdots
\end{CD}
\end{equation}
commutes as germs of order $k$. Then $\Omega_{(f_n)} \cong \Omega_{(g_n)}$.
\end{lemma}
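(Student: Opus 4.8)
The plan is to build the biholomorphism $\Omega_{(f_n)}\to\Omega_{(g_n)}$ as a limit of explicit maps, adapting the autonomous construction $\Phi_n=G^{-n}\circ X_k\circ F^n$ recalled above. Write $F_n:=f_{n-1}\circ\cdots\circ f_0$, $G_n:=g_{n-1}\circ\cdots\circ g_0$ (with $F_0=G_0=\mathrm{Id}$), and put $\Phi_n:=G_n^{-1}\circ h_n\circ F_n$, a germ at $0$. Two soft observations will be used throughout. First, since $h_n=\mathrm{Id}+h.o.t.$ the order-$k$ commutativity forces $Dg_n(0)=Df_n(0)$, so the $g_n$ inherit from \eqref{eq:uniform} two-sided bounds near $0$; consequently $\mathbb{B}_\rho$ is forward invariant under all $f_n$ and all $g_n$ for $\rho$ small, $\Omega_{(f_n)}=\bigcup_n F_n^{-1}(\mathbb{B}_\rho)$ is an increasing exhaustion by sets biholomorphic to a ball, and likewise for $\Omega_{(g_n)}$. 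Second, the hypotheses of the lemma are invariant under forgetting an initial block $f_0,\dots,f_{j-1}$; so it suffices to prove convergence of $\Phi_n$ on a fixed ball $\mathbb{B}_{r_0}$ around $0$, with constants uniform over all ``tails'' $(f_j,g_j,h_j)_{j\ge0}$, and then to globalize via the exact functional equation $\Phi_n=G_N^{-1}\circ\Phi^{(N)}_n\circ F_N$ on $F_N^{-1}(\mathbb{B}_{r_0})$ (where $\Phi^{(N)}_n$ is the corresponding germ for the shifted sequence) together with the exhaustion above.

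The core estimate is the near-origin convergence. Fix small $r_0>0$. For $z\in\mathbb{B}_{r_0}$ one has $\|F_n(z)\|\le D^n r_0$, and the order-$k$ commutativity gives a uniform bound $\|\eta_n\|\le M\,\|F_n(z)\|^{k+1}\le M\,D^{(k+1)n}r_0^{k+1}$ for the one-step error $\eta_n:=(h_{n+1}\circ f_n-g_n\circ h_n)(F_n(z))$. A short telescoping computation, using $G$ versus $g$ exactly as in the autonomous case, yields
\be\label{tele}
\Phi_{n+1}(z)-\Phi_n(z)=G_n^{-1}(w_n+\delta_n)-G_n^{-1}(w_n),
\ee
where $w_n:=h_n(F_n(z))$ has norm $\lesssim D^n r_0$ and $\|\delta_n\|\le 2C^{-1}\|\eta_n\|$. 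Running an induction on $n$ that simultaneously shows $\Phi_n(\mathbb{B}_{r_0})\subseteq\mathbb{B}_{2r_0}$, the forward $g$-orbit joining $\Phi_n(z)$ to $w_n$ stays in $\mathbb{B}_\rho$, so $w_n+\delta_n$ lies in the univalent image of $G_n$ and the Cauchy estimates $\|Dg_i^{-1}\|\le 2C^{-1}$ along that orbit give $\|DG_n^{-1}\|\le(2C^{-1})^{\,n}$ on the relevant segment. Feeding this into \eqref{tele},
\[
\|\Phi_{n+1}-\Phi_n\|_{\mathbb{B}_{r_0}}\ \lesssim\ r_0^{k+1}\Big(\tfrac{2D^{k+1}}{C}\Big)^{\!n}.
\]
Provided the order of agreement $k$ is large enough that $D^{k+1}<C$ — the sense in which the hypothesis is used, just as ``$k$ chosen sufficiently large'' in the autonomous proof — the right-hand side is a convergent geometric series whose sum tends to $0$ as $r_0\to0$; hence for $r_0$ small the $\Phi_n$ converge uniformly on $\mathbb{B}_{r_0}$ to a holomorphic germ $\Phi$, and $\Phi$ is biholomorphic there because $D\Phi_n(0)=Dg_0(0)^{-1}\cdots Dg_{n-1}(0)^{-1}Df_{n-1}(0)\cdots Df_0(0)=\mathrm{Id}$. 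The same holds for every tail, uniformly.

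To finish, glue: for $z\in\Omega_{(f_n)}$ choose $N$ with $F_N(z)\in\mathbb{B}_{r_0}$ and set $\Phi(z):=G_N^{-1}\big(\Phi^{(N)}(F_N(z))\big)$; the cocycle identities $g_j\circ\Phi^{(j)}=\Phi^{(j+1)}\circ f_j$ (limits of the exact identities $g_j\circ\Phi^{(j)}_n=\Phi^{(j+1)}_n\circ f_j$) make this independent of $N$, holomorphic on all of $\Omega_{(f_n)}$, and equal to $\lim_n\Phi_n$. Since $G_m(\Phi(z))=(g_{m-1}\circ\cdots\circ g_N)(\Phi^{(N)}(F_N(z)))\in\mathbb{B}_{D^{m-N}2r_0}\to0$, one gets $\Phi(\Omega_{(f_n)})\subseteq\Omega_{(g_n)}$. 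Bijectivity comes for free from the ``inverse diagram'': differentiating shows $h_{n+1}^{-1}\circ g_n=f_n\circ h_n^{-1}$ modulo order $k$ as well, so the construction also produces $\Psi:\Omega_{(g_n)}\to\Omega_{(f_n)}$; and since $\Psi_n\circ\Phi_n=\mathrm{Id}$ identically near $0$, passing to the limit and using the functional equations gives $\Psi\circ\Phi=\mathrm{Id}$ on $\Omega_{(f_n)}$ and $\Phi\circ\Psi=\mathrm{Id}$ on $\Omega_{(g_n)}$. Hence $\Omega_{(f_n)}\cong\Omega_{(g_n)}$.

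I expect the estimate \eqref{tele} to be the main obstacle: $G_n^{-1}$ expands by a factor $\sim C^{-n}$, so it must be paired against the $\sim D^{(k+1)n}$ decay of the transported error, and one needs both (i) the order of agreement to beat the resonance threshold $\log C/\log D$, so that the geometric series converges, and (ii) the a priori confinement $\Phi_n(\mathbb{B}_{r_0})\subseteq\mathbb{B}_{2r_0}$ to hold with constants uniform over the tails, so that $\|DG_n^{-1}\|$ can be controlled along the correct $g$-orbit (where it is $\sim C^{-n}$) rather than in the worst possible direction. Everything downstream of the germ-level convergence — the exhaustion, the gluing, and bijectivity via the inverse diagram — is soft.
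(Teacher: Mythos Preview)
The paper does not actually give a proof of this lemma; it is stated as a standard tool, with the autonomous prototype $\Phi_n=G^{-n}\circ X_k\circ F^n$ sketched immediately beforehand. Your construction $\Phi_n=G_n^{-1}\circ h_n\circ F_n$, the telescoping estimate, the exhaustion/gluing via tails, and the inverse diagram for bijectivity constitute precisely the expected non-autonomous adaptation of that sketch, so in spirit you are doing exactly what the paper has in mind.

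Two remarks. First, you are right to flag that the lemma tacitly requires $k$ large enough that $D^{k+1}<C$; this is how it is invoked throughout the paper (e.g.\ in deriving Theorem~\ref{thm:wold} one has order of contact $k$, hence agreement of $k$-jets up to order $k-1$, and the hypothesis $D^k<C$ is exactly what makes the series converge). Second, there is a small slip in your constants: with the crude bound $\|Dg_i^{-1}\|\le 2C^{-1}$ your geometric ratio is $2D^{k+1}/C$, which need not be $<1$ when $D^{k+1}<C$. The fix is the usual one --- on $\mathbb{B}_{r_0}$ one has $\|Dg_i^{-1}\|\le (1+O(r_0))C^{-1}$ since $Dg_i(0)=Df_i(0)$, so shrinking $r_0$ makes the ratio $<1$ --- and you essentially say this when you note that the series sum tends to $0$ as $r_0\to 0$, but as written the sentence ``the right-hand side is a convergent geometric series'' does not literally follow from $D^{k+1}<C$ and the displayed bound. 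Replace the hard $2$ by $1+\epsilon(r_0)$ and the argument goes through cleanly.
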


If the maps $(g_n)$ are all lower triangular polynomials then one still has that  the basin of the sequence $(g_n)$ is equal to $\mathbb C^m$. This simple fact was used in \cite{Peters} to prove the following lemma, which for simplcity we state in the case $m = 2$.

\begin{lemma}\label{lemma:dominant}
Let $(f_n)$ be a sequence of automorphisms satisfying the conditions in Conjecture \ref{conj:main}, and suppose that the linear part of each map $f_n$ is of the form
$$
(z, w) \mapsto (a_n z, b_n w + c_n z),
$$
with $|b_n|^2 < \xi |a_n|$ for some uniform constant $\xi < 1$. Then we can find bounded sequences $(g_n)$ and $(h_n)$ as in Lemma \ref{lemma:conjugation}. Moreover, the maps $g_n$ can be chosen to be lower triangular polynomials, and hence $\Omega_{(f_n)} \cong \mathbb C^2$.
\end{lemma}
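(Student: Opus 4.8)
The plan is to put the sequence $(f_n)$ into a non-autonomous analogue of the Poincar\'e--Dulac normal form and then quote Lemma \ref{lemma:conjugation}. First I would fix an integer $k$ large enough that $D^{k+1}<C$; this is the order of contact at which the truncation argument underlying Lemma \ref{lemma:conjugation} converges. Next I record that the linear part $L_n$ of $f_n$ satisfies $C\|v\|\le\|L_nv\|\le D\|v\|$ for every $v$, so $|a_n|,|b_n|\le D<1$ while $|a_nb_n|=|\det L_n|\ge C^2$; hence $|a_n|$ and $|b_n|$ lie in a fixed interval $[C^2/D,D]\subset(0,1)$. The goal is to construct, degree by degree, germs $h_n=\mathrm{Id}+\text{h.o.t.}$ and maps $g_n(z,w)=(a_nz,\ b_nw+c_nz+P_n(z))$, with $P_n$ a polynomial in $z$ of degree between $2$ and $k$, so that \eqref{diagram} commutes as germs of order $k$ and $(h_n)$ is uniformly bounded.

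The induction is on the degree $d=2,\dots,k$. Suppose every monomial of degree $<d$ has already been removed from $f_n$ except for the pure powers $z^i$ ($2\le i<d$) in the second coordinate. Conjugating by $\mathrm{Id}+\psi^{(n)}$ with $\psi^{(n)}$ homogeneous of degree $d$ changes nothing in degree $<d$ and leaves $L_n$ fixed. Comparing the degree-$d$ parts of $h_{n+1}\circ f_n$ and $g_n\circ h_n$, the coefficient of a monomial $z^iw^j$ ($i+j=d$) in a given coordinate yields, after dividing by the corresponding diagonal eigenvalue, a recursion of the form
\[
\tau^{(n)}-\mu_n\,\tau^{(n+1)}=\beta_n ,
\]
where $(\beta_n)$ is a bounded sequence built from the coefficients of $f_n$ and of the already chosen lower-degree parts of $\psi$, and the multiplier is $\mu_n=a_n^{\,i-1}b_n^{\,j}$ for the first coordinate and $\mu_n=a_n^{\,i}b_n^{\,j-1}$ for the second. (The off-diagonal coefficients $c_n$ couple, lower-triangularly in the $w$-degree, the monomials of a fixed total degree, so one solves these equations in order of decreasing $j$.) Whenever $|\mu_n|\le\rho<1$ uniformly in $n$, the series $\tau^{(n)}=\sum_{\ell\ge0}\bigl(\prod_{p=n}^{n+\ell-1}\mu_p\bigr)\beta_{n+\ell}$ is the bounded solution, with $|\tau^{(n)}|\le(1-\rho)^{-1}\sup_m|\beta_m|$, so the corresponding coefficient of $g_n$ can be annihilated by a uniformly bounded $\psi^{(n)}$.

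It remains to see which multipliers are uniformly $<1$. For $z^iw^j$ in the first coordinate with $i\ge1$, or in the second coordinate with $j\ge1$, one gets $|\mu_n|\le D^{\,d-1}<1$ directly from $|a_n|,|b_n|\le D$. The only dangerous monomials are the pure powers $w^j$ in the first coordinate, where $\mu_n=a_n^{-1}b_n^{\,j}$; this is where the dominance hypothesis is used, since $|a_n^{-1}b_n^{\,j}|=|a_n^{-1}b_n^{2}|\,|b_n|^{\,j-2}<\xi\,D^{\,j-2}\le\xi<1$. Thus every monomial except $z^d$ in the second coordinate can be removed with uniformly bounded conjugations, while the monomial $z^d$ in the second coordinate is left in place and (with its $\psi$-coefficient set to zero) contributes a uniformly bounded term to $P_n$. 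After the step $d=k$, truncating the resulting germs at degree $k$ produces lower triangular polynomials $g_n$ of degree $\le k$, a uniformly bounded sequence $(h_n)$ with $h_n=\mathrm{Id}+\text{h.o.t.}$, and a diagram \eqref{diagram} that commutes as germs of order $k$. Lemma \ref{lemma:conjugation} then gives $\Omega_{(f_n)}\cong\Omega_{(g_n)}$, and $\Omega_{(g_n)}=\mathbb C^2$ because for a lower triangular polynomial sequence the first coordinate contracts geometrically and then the second coordinate follows, so the basin is all of $\mathbb C^2$.

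The crux --- and the sole place the dominance hypothesis $|b_n|^2<\xi|a_n|$ enters --- is the uniform spectral gap $|a_n^{-1}b_n^{2}|<\xi<1$, which rules out the resonance-type relation $a_n\sim b_n^{2}$ that would otherwise destroy the uniform boundedness of the conjugating maps; every other monomial is controlled by the uniform contraction $|a_n|,|b_n|\le D<1$ alone. The remaining, routine, work is the bookkeeping showing that the inhomogeneous terms $\beta_n$ stay uniformly bounded throughout the induction and that composing the finitely many maps $\mathrm{Id}+\psi^{(n)}$ over $d=2,\dots,k$ leaves $h_n$ uniformly bounded.
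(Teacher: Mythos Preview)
Your proposal is correct and follows essentially the same approach the paper indicates: the paper attributes Lemma~\ref{lemma:dominant} to \cite{Peters} and, in the later discussion of Theorem~\ref{thm:triangulization}, spells out precisely the mechanism you use---an affine recursion $\alpha_n=\frac{b_n^k}{a_n}\alpha_{n+1}+\frac{c_n}{a_n}$ for the conjugating coefficients, whose unique bounded solution exists because the multiplier $b_n^k/a_n$ is uniformly contracting under the dominance hypothesis. Your degree-by-degree Poincar\'e--Dulac scheme, the identification of the two exceptional monomial types ($w^j$ in the first coordinate, handled by $|b_n^2/a_n|<\xi$, and $z^i$ in the second, left in the lower-triangular $g_n$), and the choice of $k$ with $D^{k+1}<C$ so that Lemma~\ref{lemma:conjugation} applies, all match the intended argument.
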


As was pointed out in \cite{Peters}, we can always find a non-autonomous change of coordinates so that the linear parts of the maps $f_n$ all become lower triangular. Let us explain the technique in the $2$-dimensional setting, where a matrix is lower diagonal if and only if $[0,1]$ is an eigen vector. Using $QR$-factorization we can find, for any vector $v_0$, a sequence of unitary matrices $(U_n)$ so that
$$
U_{n+1} \cdot D(f_n \circ \cdots \circ f_0)(0) v_0 = \lambda_n \cdot [0,1],
$$
with $\lambda_n \in \mathbb C$. Then if we defines $g_n = U_{n+1} \circ f_n \circ U_n^{-1}$, we obtains a new sequence $(g_n)$ whose basin is equivalent (by the biholomorphic map $U_0$) to the basin of the sequence $(f_n)$. Notice that the linear parts of all the maps $(g_n)$ are lower triangular. In this construction we are free to choose the initial tangent vector $v_0$.

But while we may always assume that the linear parts are lower triangular, the condition $|b_n|^2 < \xi |a_n|$ in Lemma \ref{lemma:dominant} is a strong assumption. In particular there is no reason to think that one can change coordinates to obtain a sequence of lower triangular polynomial maps. Instead we could aim for obtaining lower triangular polynomials on sufficiently large time-intervals. Let us be more precise. Suppose that we have found unitary matrices $U_{0,0}, \ldots U_{0,p_1}$ so that the maps $g_n = U_{0,n+1} \circ f_n \circ U_{0, n}^{-1}$ are lower triangular polynomials for $n = 0, \ldots p_1$. Suppose further that we can find a strictly increasing sequence $p_2, p_3, \ldots$ and for each $j$ unitary matrices $U_{j, p_j}, \ldots U_{j, p_{j+1}}$ so that the maps $g_n = U_{j,n+1} \circ f_n \circ U_{j, n}^{-1}$ are lower triangular. Then the basin of the sequence $(f_n)$ is equal to the basin of the sequence
\begin{equation} \label{eq:AM}
U_{0,0}, g_0, g_1, \ldots, g_{p_1-1}, U_{0,p_1}^{-1}, U_{1,p_1}, g_{p_1}, \ldots.
\end{equation}
In the spirit of Theorem \ref{thm:repeat} one would expect that the basin of this new sequence is equal to $\mathbb C^2$ as long as the sequence $(p_j)$ is sparse enough. Indeed this is the case, as follows from the following Lemma, proved by Abbondandolo and Majer in \cite{AM}.

\begin{lemma}\label{lemma:sparse}
Suppose that the maps $g_n$ in the sequence given in Equation \eqref{eq:AM} are lower triangular polynomials of degree $k$, and that
\begin{equation}\label{eq:infinitesum}
\sum \frac{p_{j+1} - p_j}{k^j} = +\infty.
\end{equation}
Then the basin of the sequence in Equation \ref{eq:AM} is equal to $\mathbb C^2$
\end{lemma}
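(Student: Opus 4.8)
\emph{Proof proposal.} Since the statement only claims that $\Omega$ \emph{equals} $\mathbb C^2$ (not merely that it is biholomorphic to it), it suffices to show that every $z_0\in\mathbb C^2$ is eventually mapped into the unit ball $\mathbb B$ by the iterates of the sequence \eqref{eq:AM}; indeed $\mathbb B\subset\Omega$, because the maps $g_n$ contract $\mathbb B$ into itself and the unitary matrices preserve $\mathbb B$, and once an orbit enters $\mathbb B$ it stays there and tends to $0$. I would organise the sequence \eqref{eq:AM} into blocks: put $\ell_j:=p_{j+1}-p_j$ (with $p_0:=0$, so $\ell_j\ge1$) and let $\mathcal G_j$ be the composition of the $\ell_j$ lower triangular polynomial maps appearing between two consecutive pairs of unitary matrices. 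The point of restricting to dimension two is that a lower triangular polynomial fixing the origin has the shape $(z,w)\mapsto(\lambda z,\mu w+H(z))$ with $\deg H\le k$, and composing two maps of this shape again gives a map of this shape with $\deg\le k$; hence each $\mathcal G_j$ is a polynomial automorphism of $\mathbb C^2$ of degree $\le k$ fixing the origin. Up to the harmless initial unitary $U_{0,0}$, the sequence \eqref{eq:AM} is then $\mathcal G_0,\mathcal G_1,\dots$ with a single unitary matrix $V_j$ interposed between $\mathcal G_{j-1}$ and $\mathcal G_j$.

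Next I would record two estimates. Iterating $C\|z\|\le\|g_n(z)\|\le D\|z\|$ on $\mathbb B$ gives $C^{\ell_j}\|z\|\le\|\mathcal G_j(z)\|\le D^{\ell_j}\|z\|$ for $z\in\mathbb B$. Since $\mathcal G_j$ is a polynomial map of degree $\le k$ bounded by $D^{\ell_j}$ on $\mathbb B$, Cauchy's estimates bound all of its coefficients by $c(k)\,D^{\ell_j}$, so there is a constant $A=A(k)\ge1$ with
\[
\|\mathcal G_j(z)\|\le A\,D^{\ell_j}\,\|z\|^{k}\quad\text{when}\quad \|z\|\ge 1.
\]
Writing $z_j$ for the image of $z_0$ under the first $j$ blocks — the interposed unitaries are isometries and so do not enter norm estimates — we obtain the recursion $\|z_{j+1}\|\le D^{\ell_j}\|z_j\|$ if $\|z_j\|\le 1$, and $\|z_{j+1}\|\le A\,D^{\ell_j}\|z_j\|^{k}$ if $\|z_j\|\ge 1$. (If $k=1$ the maps $\mathcal G_j$ are linear, one may take $A=1$, and the argument below trivialises.)

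Finally I would run the recursion. Assume, for contradiction, $\|z_j\|\ge1$ for all $j$; then $\log\|z_{j+1}\|\le k\log\|z_j\|-\ell_j|\log D|+\log A$, and unwinding this linear recursion yields
\[
\frac{\log\|z_j\|}{k^{\,j-1}}\ \le\ k\log\|z_0\|+\frac{k}{k-1}\log A-|\log D|\sum_{i=0}^{j-1}\frac{\ell_i}{k^{i}}.
\]
By \eqref{eq:infinitesum} the right-hand side tends to $-\infty$, so $\|z_j\|<1$ for some $j$, a contradiction. Hence there is a first $J$ with $\|z_J\|<1$, and then $\|z_j\|\le D^{\,p_j-p_J}\|z_J\|\to 0$ because $p_j\to\infty$; the iterates at the intermediate times (inside an incomplete block, or at a unitary) have norm at most $\|z_j\|$, so the whole orbit of $z_0$ converges to $0$. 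As $z_0$ was arbitrary, $\Omega=\mathbb C^2$.

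The step that really carries the proof is the Cauchy estimate: it is the smallness $D^{\ell_j}$ of \emph{all} coefficients of $\mathcal G_j$, including the top-degree ones, that lets the geometric contraction of a long block overcome the degree-$k$ amplification $\|z\|\mapsto\|z\|^{k}$ produced by the polynomial part far from the origin. The bookkeeping of this competition — one factor $k$ per block, against the cumulative contraction $D^{p_j}$ — is exactly what converts the divergence of $\sum\ell_j/k^{j}$ into the conclusion; I would expect the write-up of this last point, together with the transition across $\|z_j\|\approx1$ and the incomplete final block, to be the only places requiring any care.
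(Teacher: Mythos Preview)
The paper does not give its own proof of this lemma; it attributes the result to Abbondandolo and Majer \cite{AM} and uses it as a black box. So there is no in-paper argument to compare against. That said, your proof is correct and self-contained, and it is very much in the spirit of how such results are established.

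Your argument hinges on two observations that are worth making explicit, since they are exactly the ones the paper alludes to when it says lower triangular maps ``in many ways behave as linear maps'': first, in $\mathbb C^2$ the class of lower triangular polynomials of degree $\le k$ fixing the origin is closed under composition, so each block $\mathcal G_j$ is again of degree $\le k$; second, because $\mathcal G_j$ is bounded by $D^{\ell_j}$ on $\mathbb B$ and has bounded degree, Cauchy estimates force all coefficients to be $O(D^{\ell_j})$, whence the global bound $\|\mathcal G_j(z)\|\le A\,D^{\ell_j}\|z\|^k$ for $\|z\|\ge1$. The telescoping of the resulting log-recursion against the divergence of $\sum \ell_j/k^j$ is then straightforward, and your bookkeeping (including the constant $\frac{k}{k-1}\log A$) checks out. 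The treatment of the interposed unitaries (two per junction, composing to one isometry) and of the intermediate iterates once the orbit has entered $\mathbb B$ is also fine.

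One minor remark: you use without comment that the conjugated maps $g_n=U_{j,n+1}\circ f_n\circ U_{j,n}^{-1}$ inherit the bound $C\|z\|\le\|g_n(z)\|\le D\|z\|$ on $\mathbb B$ from the $f_n$; this is immediate since the $U$'s are unitary, but it is what makes $\|\mathcal G_j\|\le D^{\ell_j}$ on $\mathbb B$ legitimate, so it might be worth a half-line.
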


The proof of Theorem \ref{thm:AM} from \cite{AM} can now be sketched as follows. Define $p_{j+1} = k^j + p_j$, and on each interval find a tangent vector $v_j$ which is contracted most rapidly by the maps $Df_{p_{j+1}, p_j}$. Next find the non-autonomous change of coordinates by unitary matrices so that the maps $g_n$ as defined above all have lower triangular linear part. Then on each interval $I_j = [p_j, p_{j+1}]$ the maps $g_n$ satify the conditions of Lemma \ref{lemma:dominant} ``on average''. This is enough to find another non-autonomous change of coordinates after which the maps $g_n$ are lower triangular polynomial maps on each interval $I_j$. Then it follows from Lemma \ref{lemma:sparse} that the basin of the sequence $(f_n)$ is equivalent to $\mathbb C^2$.

Now we arrive at one of the main points presented in this article. In the argument of Abbondandolo and Majer the intervals $[p_j, p_{j+1}]$ were chosen without taking the maps $(f_n)$ into consideration, it was sufficient to make a simple choice so that Equation \eqref{eq:infinitesum} is satisfied. In the last section of this article we will show that we can obtain stronger results if we instead let the intervals $[p_j, p_{j+1}]$ depend on the maps $(f_n)$, or to be more precise, on the linear parts of the maps $(f_n)$.

\subsection{Abstract Basins}

Let us discuss a construction due to Forn{\ae}ss and Stens{\o}nes \cite{FS}. Let $(f_n)$ now be a sequence of biholomorphic maps from the unit ball $\mathbb B$ into $\mathbb B$, satisfying
$$
C \|z\| \le \|f_n(z)\| \le D\|z\|
$$
for some uniform $1>D>C>0$ as usual. We define the \emph{abstract basin of attraction} of the sequence $(f_n)$ as follows. Consider all sequences of the form
$$
(x_k, x_{k+1}, \ldots), \; \; \mathrm{with} \; \; x_{n+1} = f_n(x_n) \; \; \mathrm{for} \; \mathrm{all} \; n \ge k.
$$
We say that
$$
(x_k, x_{k+1}, \ldots) \sim (y_l, y_{l+1}, \ldots)
$$
if there exists a $j \ge \max(k,l)$ such that $x_j = y_j$. This gives an equivalence relation $\sim$, and we define
$$
\Omega_{(f_n)} = \{(x_k, x_{k+1}, \ldots) \mid f_n(x_n) = x_{n+1} \}/\sim
$$
We refer to $\Omega_{(f_n)}$ as the \emph{abstract basin of attraction}, sometimes also called the \emph{tail space}. We have now used the notation $\Omega_{(f_n)}$ for both abstract and non-autonomous basins, but thanks to the following lemma this will not cause any problems.

\begin{lemma}\label{lemma:abstract}
Let $(f_n)$ be a sequence of automorphisms of $\mathbb C^m$ which satisfy the conditions in Conjecture \ref{conj:main}. Then the basin of attraction of the sequence $(f_n)$ is equivalent to the abstract basin of the sequence $(f_n|_{\mathbb B})$.
\end{lemma}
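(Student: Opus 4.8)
The plan is to exhibit both $\Omega_{(f_n)}$ and the abstract basin of $(f_n|_{\mathbb B})$ as one and the same direct limit of copies of the unit ball $\mathbb B$, glued by the partial compositions of the $f_n$. Throughout write $F_{l,k}:=f_{l-1}\circ\cdots\circ f_k$ for $l>k$ and $F_{k,k}:=\mathrm{Id}$; these are automorphisms of $\mathbb C^m$ satisfying the cocycle identity $F_{l,0}=F_{l,k}\circ F_{k,0}$. The only analytic input is \eqref{eq:uniform}: if $z\in\mathbb B$ then $\|f_k(z)\|\le D\|z\|<\|z\|<1$, so the whole forward orbit $z,f_k(z),f_{k+1}(f_k(z)),\dots$ stays in $\mathbb B$ and tends to $0$; consequently, for an arbitrary $z\in\mathbb C^m$, the orbit $F_{n,0}(z)$ converges to $0$ as soon as $F_{N,0}(z)\in\mathbb B$ for a single $N$.

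First I would describe $\Omega_{(f_n)}$ concretely. Put $V_k:=F_{k,0}^{-1}(\mathbb B)$, an open subset of $\mathbb C^m$ that is biholomorphic to $\mathbb B$ via $F_{k,0}$, with inverse $\varphi_k:=(F_{k,0}|_{V_k})^{-1}:\mathbb B\to V_k$. By the observation above, $z\in V_k$ forces $F_{k+1,0}(z)=f_k(F_{k,0}(z))\in\mathbb B$, so $V_k\subseteq V_{k+1}$; and $z\in\Omega_{(f_n)}$ exactly when $F_{N,0}(z)\in\mathbb B$ for some $N$, i.e. $\Omega_{(f_n)}=\bigcup_k V_k$ is an increasing union of open sets. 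From $F_{k+1,0}=f_k\circ F_{k,0}$ and $F_{k+1,k}=f_k$ one gets $\varphi_{k+1}\circ(F_{k+1,k}|_{\mathbb B})=\varphi_k$. Hence $\Omega_{(f_n)}$, together with the open embeddings $\varphi_k$, is the direct limit (in the category of complex manifolds) of the system $\bigl(\mathbb B,\,F_{k+1,k}|_{\mathbb B}\bigr)_{k\ge 0}$ of balls connected by the maps $F_{k+1,k}|_{\mathbb B}$.

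Next I would identify the abstract basin of $(f_n|_{\mathbb B})$ with this very direct limit. For each $k$ define $\iota_k:\mathbb B\to\Omega_{(f_n|_{\mathbb B})}$ by sending $x$ to the $\sim$-class of the orbit $(x,f_k(x),f_{k+1}(f_k(x)),\dots)$, which is legitimate since its tail stays in $\mathbb B$. Each $\iota_k$ is injective, its image consists precisely of the classes admitting a representative that starts at time $k$ inside $\mathbb B$ (all later entries then lie in $\mathbb B$ automatically), and $\iota_{k+1}\circ(F_{k+1,k}|_{\mathbb B})=\iota_k$ directly from the definition of $\sim$. Since every class equals $\iota_k(x_k)$ for a suitable representative $(x_k,x_{k+1},\dots)$, the sets $\iota_k(\mathbb B)$ increase to $\Omega_{(f_n|_{\mathbb B})}$; two distinct classes always lie in a common $\iota_k(\mathbb B)$ and can be separated there, so the Forn{\ae}ss--Stens{\o}nes complex structure making the $\iota_k$ holomorphic charts is Hausdorff and realizes the same direct limit as above. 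The universal property now yields a canonical biholomorphism $\Phi:\Omega_{(f_n)}\to\Omega_{(f_n|_{\mathbb B})}$ characterized by $\Phi\circ\varphi_k=\iota_k$; explicitly, $\Phi(z)$ is the tail class of the eventual forward orbit of $z$.

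The point to be careful about --- and the reason the abstract basin is worth introducing at all --- is that the construction of the $V_k=F_{k,0}^{-1}(\mathbb B)$ uses crucially that the $f_n$ are globally defined automorphisms of $\mathbb C^m$: for a sequence of mere holomorphic self-embeddings of $\mathbb B$ there is no ambient set $\Omega_{(f_n)}$, and only the abstract basin survives. Everything else reduces to the cocycle identity $F_{l,0}=F_{l,k}\circ F_{k,0}$, the compatibility $\varphi_{k+1}\circ F_{k+1,k}=\varphi_k$ and its analogue for the $\iota_k$, and the elementary contraction estimate from \eqref{eq:uniform}; I do not expect any serious obstacle.
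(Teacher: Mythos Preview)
The paper states Lemma~\ref{lemma:abstract} without proof, treating it as a standard observation about the Forn{\ae}ss--Stens{\o}nes tail space; so there is no argument in the paper to compare against. Your proof is correct and is exactly the natural one: exhibit both objects as the direct limit of the system $(\mathbb B, f_k|_{\mathbb B})_{k\ge 0}$, using on the concrete side the charts $\varphi_k=(F_{k,0})^{-1}:\mathbb B\to V_k\subset\Omega_{(f_n)}$ and on the abstract side the tautological charts $\iota_k$, then invoke the universal property. The only point worth a word of care is the Hausdorffness of the abstract basin, which you do address; one could also verify it directly by noting that any two classes eventually live in a common $\iota_k(\mathbb B)\cong\mathbb B$. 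Nothing is missing.
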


Hence from now on we allow ourselves to be careless and write $\Omega_{(f_n)}$ for both kinds of attracting basins. Abstract basins were used by Forn{\ae}ss and Stens{\o}nes to prove the following.

\begin{thm}[Forn{\ae}ss-Stens{\o}nes]\label{thm:FS}
Let $f$ and $p$ be as in Conjecture \ref{conj:stable}. Then $\Sigma^s_f(p)$ is equivalent to a domain in $\mathbb C^m$.
\end{thm}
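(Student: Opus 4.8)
The plan is to realize $\Sigma_f^s(p)$ as an abstract basin of attraction of a sequence of uniformly contracting germs on the ball, to present this basin as an increasing union of balls, and then to construct a biholomorphism onto a domain in $\mathbb C^m$ as a limit of renormalized compositions, the renormalizations being automorphisms of $\mathbb C^m$ produced by the Anders\'en--Lempert theory.

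\emph{Reduction.} Since $f$ acts hyperbolically on the compact set $K$, the stable manifold theorem produces local stable manifolds $W^s_{\varepsilon_0}(q)$ of one fixed size $\varepsilon_0$ for every $q\in K$, each biholomorphic to a ball in $\mathbb C^m$, depending continuously on $q$, and with $f(W^s_{\varepsilon_0}(q))\subset W^s_{\varepsilon_0}(f(q))$. I would fix holomorphic charts $\varphi_n\colon\mathbb B\to W^s_{\varepsilon_0}(f^n(p))$ with $\varphi_n(0)=f^n(p)$, after a uniform rescaling, and set $g_n:=\varphi_{n+1}^{-1}\circ f\circ\varphi_n$; using continuity of $Df$ and $Df^{-1}$ on $K$ together with hyperbolicity, the $g_n$ are biholomorphisms of a neighborhood of $\overline{\mathbb B}$ onto their images, fix the origin, and satisfy $C\|z\|\le\|g_n(z)\|\le D\|z\|$ on $\mathbb B$ for uniform constants $0<C<D<1$. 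A point lies on $\Sigma_f^s(p)$ exactly when its forward orbit eventually enters these local stable manifolds, and unwinding the definitions identifies $\Sigma_f^s(p)$ with the abstract basin $\Omega_{(g_n)}$. So it suffices to embed $\Omega_{(g_n)}$ as a domain in $\mathbb C^m$; for $m=1$ this is immediate from uniformization (the basin is then a noncompact simply connected Riemann surface), so I would assume $m\ge2$.

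\emph{Construction of the embedding.} Writing $\iota_k\colon\mathbb B\to\Omega:=\Omega_{(g_n)}$ for $x\mapsto[(x,g_k(x),g_{k+1}g_k(x),\dots)]$, each $\iota_k$ parametrizes an open set $M_k$ injectively, one has $\iota_k=\iota_{k+1}\circ g_k$, and $\Omega=\bigcup_kM_k$ with the $M_k$ nested. It therefore suffices to produce injective holomorphic maps $\psi_k\colon\mathbb B\to\mathbb C^m$ with $\psi_k=\psi_{k+1}\circ g_k$: then $\Psi\colon\Omega\to\mathbb C^m$, $\Psi|_{M_k}=\psi_k\circ\iota_k^{-1}$, is a well-defined holomorphic map, injective since the $M_k$ are nested, hence a biholomorphism onto the domain $\bigcup_k\psi_k(\mathbb B)$. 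To build the $\psi_k$ I would fix a small radius $r$ and, for each $n$, use the Anders\'en--Lempert theory to choose an automorphism $\Theta_n$ of $\mathbb C^m$ with $D\Theta_n(0)=Dg_n(0)$ and $\|\Theta_n-g_n\|_{r\mathbb B}\le\delta_n$; this is possible because on $r\mathbb B$ the map $g_n$ is a small perturbation of the linear map $Dg_n(0)$, so that $g_n(r\mathbb B)$ is Runge and $g_n$ is isotopic to the inclusion. Setting $\Xi_N:=(\Theta_{N-1}\circ\cdots\circ\Theta_0)^{-1}$ and $G_{N,k}:=g_{N-1}\circ\cdots\circ g_k$, the maps $\Xi_N\circ G_{N,k}$ are injective and holomorphic on $\mathbb B$; since $\Xi_{N+1}\circ g_N=\Xi_N\circ(\Theta_N^{-1}\circ g_N)$ and $\Theta_N^{-1}\circ g_N$ differs from the identity by $O(\delta_N)$ near $0$, the successive differences $\Xi_{N+1}\circ G_{N+1,k}-\Xi_N\circ G_{N,k}$ are controlled, on any fixed compact subset of $\mathbb B$, by the distortion of $\Xi_N$ on the shrinking ball $G_{N,k}(r\mathbb B)$ times $O(\delta_N)$. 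The uniform contraction bounds this distortion by a fixed exponential rate in $N$, so by letting $\delta_N$ decay faster the differences become summable and $\psi_k:=\lim_N\Xi_N\circ G_{N,k}$ exists locally uniformly on $\mathbb B$. One then verifies $\psi_k=\psi_{k+1}\circ g_k$ and that $D\psi_k(0)=(Dg_{k-1}(0)\cdots Dg_0(0))^{-1}$ is invertible, whence $\psi_k$ is injective by the several-variable Hurwitz principle (a locally uniform limit of injective holomorphic maps is injective provided its differential is somewhere invertible).

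\emph{The main obstacle.} The delicate point is precisely the convergence and injectivity of this limit: one must choose the black-box automorphisms $\Theta_n$ accurately enough that the renormalized compositions $\Xi_N\circ G_{N,k}$ stabilize, and it is here that the uniform contraction $C\|z\|\le\|g_n(z)\|$ is used essentially --- without it the expansion of $\Xi_N$ is not controlled, and Theorem \ref{thm:short} shows the statement then fails. What the method does \emph{not} give is a biholomorphism onto all of $\mathbb C^m$, because one has no control on how the images $\psi_k(\mathbb B)$ exhaust $\mathbb C^m$, so $\bigcup_k\psi_k(\mathbb B)$ is only a domain. Obtaining that exhaustion --- equivalently, Conjecture \ref{conj:main} --- would require replacing the $\Theta_n$ by normalizing maps adapted to the $g_n$, in the spirit of the lower-triangular normal forms available in the autonomous case, whose non-autonomous version is obstructed by small denominators coming from oscillating eigenvalue ratios.
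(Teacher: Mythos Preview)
The paper does not supply a proof of this theorem; it is quoted from \cite{FS} and only the surrounding machinery (abstract basins, Lemma~\ref{lemma:abstract}) is discussed. So there is no ``paper's own proof'' to compare against, only the original Forn{\ae}ss--Stens{\o}nes argument.

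Your outline is in fact that argument. The reduction of $\Sigma_f^s(p)$ to an abstract basin $\Omega_{(g_n)}$ via uniform local stable manifolds and charts is exactly how \cite{FS} begins, and the construction of the embedding by approximating each $g_n$ with a global automorphism $\Theta_n$ via Anders\'en--Lempert, then taking limits of $\Xi_N\circ G_{N,k}$, is the heart of their paper. Your identification of the key difficulty --- controlling the expansion of $\Xi_N$ well enough that a summable choice of $\delta_N$ exists --- is also the correct one, and your use of Hurwitz for injectivity is standard.

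Two technical points are worth tightening. First, the jet condition $D\Theta_n(0)=Dg_n(0)$ does not come for free from the basic Anders\'en--Lempert statement; you need the version with interpolation of finite jets (Forstneri\v{c}), or else a post-composition with a linear correction close to the identity, and you should say which. Second, the Lipschitz control of $\Xi_N$ on the image $G_{N,k}(\mathbb B)$ is slightly more delicate than ``a fixed exponential rate'': the first $k$ factors $\Theta_0^{-1},\dots,\Theta_{k-1}^{-1}$ are applied at scale $\sim 1$ rather than at the shrinking scale, so their contribution is a $k$-dependent constant that must be absorbed before the inductive choice of $\delta_N$. This does not affect convergence for fixed $k$, but the sentence ``the uniform contraction bounds this distortion by a fixed exponential rate in $N$'' hides it. With those two clarifications the sketch is complete and matches \cite{FS}.
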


\begin{remark} Working with abstract basins can be very convenient. For example, Lemma \ref{lemma:conjugation} also holds for abstract basins which, in conjunction with Lemma \ref{lemma:abstract}, implies that in Diagram \ref{diagram} we do not need to worry about whether the maps $h_n$ and $g_n$ are globally defined automorphisms. From the fact that the sequences $(g_n)$ and $(h_n)$ are uniformly bounded it follows that their restrictions to some uniform neighborhood of the origin are biholomorphisms, which is all that is needed.
\end{remark}

\section{Open problems}

In this section we consider a number of open problems which might be more accessible than Conjectures \ref{conj:stable} and \ref{conj:main}. As we noted in the previous section, essentially the only available method for proving that a manifold is equivalent to $\mathbb C^m$ is by constructing an explicit biholomorphism. It would therefore be very useful to find general conditions which imply that a given domain is equivalent to Euclidean space.

\begin{problem}
Let $\Omega$ be an $m$-dimensional complex manifold. Describe conditions which are sufficient to conclude that $\Omega \cong \mathbb C^2$.
\end{problem}

One possible example of such conditions is given in the following Conjecture of Yau \cite{Yau}.

\begin{problem}[Yau's uniformization conjecture]
A complete noncompact K\"ähler manifold with positive holomorphic bisectional curvature is biholomorphic to $\mathbb C^m$.
\end{problem}

The relation of Yau's Uniformization Conjecture to Conjectures \ref{conj:stable} and \ref{conj:main} may be even stronger than it seems. For example, the concept of slowly varying maps, which were used by Jonsson and Varolin in \cite{JV} to prove Theorem \ref{thm:JV}, was also used by Chau and Tam to study Yau's Uniformization Conjecture, see for example \cite{CT}.

\medskip

Let us go back to non-autonomous basins of attraction. We present several seemingly simpler situations in which it is not known whether the basin is equivalent to Euclidean space.

\begin{problem}
Let $F$ and $G$ be automorphisms of $\mathbb C^m$, both having an attracting fixed point in the origin. Let $(f_n)$ be a sequence in which each map $f_n$ is equal to either $F$ or $G$. Is $\Omega_{(f_n)}$ equivalent to $\mathbb C^m$?
\end{problem}

The above problem is open even when $F$ and $G$ are explicit and quite simple maps, such as
$$
F(z,w) = (\frac{1}{2} z + w^2, \frac{1}{9} w), \; \; \mathrm{and} \; \; G(z,w) = (\frac{1}{9} z, \frac{1}{2} w + z^2).
$$
Notice that $(\frac{1}{2})^3 > \frac{1}{9}$, hence Theorem \ref{thm:main} does not apply. As we noted before, it is not clear whether requiring that the maps $f_n$ have diagonal linear part is a significant simplification. In the case $D^{k+1} < C$ discussed in the last section the diagonality assumption significantly reduces the complexity of our computations. It would therefore be worthwhile to study the general problem under the same assumption.

\begin{problem}
Let $(f_n)$ be a sequence of automorphisms of $\mathbb C^2$ satisfying the conditions of Conjecture \ref{conj:main}, and suppose that the maps $(f_n)$ all have diagonal linear part. Does it follow that $\Omega_{(f_n)} \cong \mathbb C^2$?
\end{problem}

Instead of trying to prove that $\Omega_{(f_n)}$ is biholomorphic to Euclidean space, one could take a step back and study other (and perhaps weaker) properties of the domains $\Omega_{(f_n)}$.

\begin{problem}
What properties must a non-autonomous basin $\Omega_{(f_n)}$ satisfy?
\end{problem}

For example, we know that $\Omega_{(f_n)}$ is an increasing union of balls, and that its Kobayashi metric vanishes identically.  The following was proved by Wold in \cite{Wold}.

\begin{thm}[Wold]
Every non-autonomous basin $\Omega_{(f_n)} \subset \mathbb C^m$ is Runge. Conversely, each Runge Fatou-Bieberbach domain can be written as the basin of a sequence of automorphisms.
\end{thm}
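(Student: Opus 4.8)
The plan is to prove the two halves of the statement separately.

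\emph{Part 1: every non-autonomous basin is Runge.} Here I would argue directly that polynomial hulls of compacts stay inside the basin. Write $F_n:=f_{n-1}\circ\cdots\circ f_0$; by \eqref{eq:uniform} one has $\mathbb B\subseteq f_n^{-1}(\mathbb B)$, so the open sets $F_n^{-1}(\mathbb B)$ increase to $\Omega:=\Omega_{(f_n)}$, which is thus pseudoconvex (an increasing union of biholomorphic images of balls), and it suffices to show $\widehat K\subseteq\Omega$ for every compact $K\subseteq\Omega$, where $\widehat K$ is the polynomial hull in $\mathbb C^m$. Given $K$, choose $N$ with $K\subseteq F_N^{-1}(\mathbb B)$, and --- iterating the right-hand inequality of \eqref{eq:uniform} a few more times --- with $\sup_K\|F_N\|<1/\sqrt m$. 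Since each coordinate $F_{N,i}$ of $F_N$ is entire, $\sup_{\widehat K}|F_{N,i}|=\sup_K|F_{N,i}|<1/\sqrt m$, so $\|F_N\|^2=\sum_i|F_{N,i}|^2<1$ on $\widehat K$, i.e. $\widehat K\subseteq F_N^{-1}(\mathbb B)\subseteq\Omega$. As $\Omega$ is pseudoconvex this says exactly that $\Omega$ is Runge.

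\emph{Part 2: every Runge Fatou--Bieberbach domain is a basin.} Let $\Phi\colon\mathbb C^m\to\Omega$ be a biholomorphism. First I would normalise: composing with a translation, a linear automorphism and a dilation $z\mapsto\lambda z$ with $\lambda$ large (all preserving the Runge and Fatou--Bieberbach properties), assume $\overline{\mathbb B}\subseteq\Omega$, $\Phi(0)=0$, $\Phi'(0)=\mathrm{Id}$, and that the inner contraction $g:=\Phi\circ(\tfrac12\mathrm{Id})\circ\Phi^{-1}$ --- an automorphism of $\Omega$ with $g^n\to0$ locally uniformly --- satisfies $\|g-\tfrac12\mathrm{Id}\|_{\overline{\mathbb B}}<\tfrac18$ (for $\lambda$ large, $g(z)=\tfrac12 z+O(\|z\|^2/\lambda)$ near $0$). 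Pick $\delta>0$ with $\Phi^{-1}(\overline{\mathbb B})\subseteq\mathbb B_\delta$, the ball of radius $\delta$, and set $L_n:=\Phi(\overline{\mathbb B_{\delta 2^n}})$; these compacts exhaust $\Omega$, one has $g^{-n}(\mathbb B)\subseteq\mathrm{int}\,L_n$, and $g^n(\partial L_{n+1})=\Phi(\partial\mathbb B_{2\delta})$ is disjoint from $\overline{\mathbb B}$, hence of norm $\geq m_0$ for some $m_0>1$.

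Next I would build automorphisms $F_n$ of $\mathbb C^m$ directly (not by composition, to avoid a loss of control in the induction). Take $F_0:=\mathrm{Id}$; for $n\geq1$, since $g^n$ is isotopic to $\mathrm{id}_\Omega$ through automorphisms of $\Omega$ (conjugate the dilations $s\,\mathrm{Id}$, $2^{-n}\leq s\leq1$, by $\Phi$) and $\Omega$ is Runge, the Anders\'en--Lempert theorem provides an automorphism of $\mathbb C^m$ approximating $g^n$ as closely as desired on $L_{n+1}$; composing with a small affine map, take $F_n\in\mathrm{Aut}(\mathbb C^m)$ with $F_n(0)=0$, $F_n'(0)=2^{-n}\mathrm{Id}$, and $\|F_n-g^n\|_{L_{n+1}}<\epsilon_n$, where $\epsilon_n\downarrow0$ is fixed recursively and small enough. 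Set $f_n:=F_{n+1}\circ F_n^{-1}\in\mathrm{Aut}(\mathbb C^m)$, so $f_{n-1}\circ\cdots\circ f_0=F_n$. Three facts then finish it. \textbf{(a)} On $\partial L_{n+1}$, $\|F_n\|\geq m_0-\epsilon_n>1$, so the compact connected set $F_n^{-1}(\overline{\mathbb B})$ --- which contains $0\in\mathrm{int}\,L_{n+1}$ and misses $\partial L_{n+1}$ --- lies in $\mathrm{int}\,L_{n+1}\subseteq\Omega$; in particular $F_n^{-1}(\mathbb B)\subseteq\Omega$. \textbf{(b)} If $z\in\Omega$, then $z\in\mathrm{int}\,L_{n+1}$ for all large $n$, where $\|F_n(z)\|\leq\|g^n(z)\|+\epsilon_n\to0$, so $z\in F_n^{-1}(\mathbb B)$ eventually. \textbf{(c)} Using (a) to place $F_n^{-1}(w)$ in $\mathrm{int}\,L_{n+1}$ and the estimate on $\|F_n-g^n\|_{L_{n+1}}$, one checks $f_n(w)=F_{n+1}(F_n^{-1}(w))\to g^{n+1}(g^{-n}(w))=g(w)$ uniformly on $\overline{\mathbb B}$; combined with $f_n(0)=0$, $f_n'(0)=\tfrac12\mathrm{Id}$ and $\|g-\tfrac12\mathrm{Id}\|_{\overline{\mathbb B}}<\tfrac18$, the maximum principle along complex lines applied to $f_n-\tfrac12\mathrm{Id}$ gives $\tfrac14\|w\|\leq\|f_n(w)\|\leq\tfrac34\|w\|$ on $\mathbb B$, so $(f_n)$ satisfies \eqref{eq:uniform}. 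By (c), $\Omega_{(f_n)}=\bigcup_n F_n^{-1}(\mathbb B)$, which by (a) is contained in and by (b) contains $\Omega$; hence $\Omega_{(f_n)}=\Omega$.

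\emph{The main obstacle.} Part 1 is elementary; in Part 2 the engine is the Anders\'en--Lempert theorem, which is the only place "Runge" is used. The genuinely delicate step is (a): since $F_n$ is controlled only on $L_{n+1}$, one must rule out that its behaviour elsewhere creates extra points whose $F_n$-orbits tend to $0$ but lie outside $\Omega$ --- this is what the bound $\|g^n\|\geq m_0>1$ on $\partial L_{n+1}$, together with connectedness of $F_n^{-1}(\overline{\mathbb B})$, accomplishes. A secondary nuisance is extracting a truly multiplicative contraction rate in (c) rather than merely $f_n(w)\approx\tfrac12 w$ away from $0$, which is why one normalises the $1$-jet of each $F_n$ at the origin and dilates $\Omega$ at the start.
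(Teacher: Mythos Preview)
The paper does not actually prove this theorem; it is quoted from \cite{Wold} without argument, so there is no ``paper's own proof'' to compare against. That said, your outline is essentially the standard route (and, in spirit, Wold's): Part~1 via polynomial hulls and entire coordinate functions is correct and clean, and Part~2 correctly identifies Anders\'en--Lempert as the engine, using the Runge hypothesis to approximate the conjugated dilations $g^n$ by global automorphisms.

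A few points deserve tightening. In step~(c), the Schwarz-type estimate you invoke along complex lines gives $\|f_n(w)-\tfrac12 w\|\le \sqrt{m}\,\|f_n-\tfrac12\mathrm{Id}\|_{\overline{\mathbb B}}\cdot\|w\|$ (component-wise application introduces a dimensional factor), so the constant $\tfrac18$ in the normalisation of $g$ should really be chosen depending on $m$; this is harmless since the initial dilation makes $\|g-\tfrac12\mathrm{Id}\|_{\overline{\mathbb B}}$ as small as you like. Also, ``$f_n\to g$'' only literally gives the contraction bounds for large $n$; you need to say explicitly that $\epsilon_n$ is chosen small enough from the outset (not just eventually) so that the bound holds for every $n$---your recursive choice of $\epsilon_n$ allows this, but it should be stated. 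Finally, the ``small affine correction'' fixing the $1$-jet of $F_n$ at $0$ uses that $F_n'(0)$ is close to $2^{-n}\mathrm{Id}$, which you get from Cauchy estimates on the approximation over $\overline{\mathbb B}\subset L_{n+1}$; worth one sentence. With these cosmetic fixes your argument goes through.
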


We note that in \cite{Wold2} Wold proved that there exist Fatou-Bieberbach domains which are not Runge, and therefore cannot be written as a non-autonomous basin.

\medskip

Let us recall the Short $\mathbb C^2$-example that was constructed by Forn{\ae}ss in \cite{Fornaess}. This domain was shown not to be equivalent to $\mathbb C^2$ by constructing a bounded non-constant plurisubharmonic function on $\Omega$. We now prove the following.

\begin{lemma}\label{lemma:psh}
Let $(f_n)$ as in Conjecture \ref{conj:main}. Then there is no non-constant bounded pluri-subharmonic function on $\Omega_{(f_n)}$.
\end{lemma}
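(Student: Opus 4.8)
The plan is to exhibit $\Omega:=\Omega_{(f_n)}$ as an increasing union of balls and to push a bounded plurisubharmonic function down to $\mathbb{B}$. By Lemma~\ref{lemma:abstract} we may work with the abstract basin, which is the union of the images $\iota_n(\mathbb{B})$ of the canonical embeddings $\iota_n\colon\mathbb{B}\to\Omega$ determined by $\iota_n=\iota_{n+1}\circ f_n$; since $\|f_n(z)\|\le D\|z\|$ on $\mathbb{B}$ we have $f_n(\mathbb{B})\subset B(0,D)$, hence $\iota_n(\mathbb{B})\subset\iota_{n+1}(\mathbb{B})$ and $\Omega=\bigcup_n\iota_n(\mathbb{B})$, with each $\iota_n(\mathbb{B})$ biholomorphic to $\mathbb{B}$. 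Taking $z=0$ in \eqref{eq:uniform} gives $f_n(0)=0$, so all the points $\iota_n(0)$ coincide with a single $o\in\Omega$. The one place the \emph{lower} bound in \eqref{eq:uniform} enters is the observation that $B(0,C^n)\subseteq H_n(\mathbb{B})\subseteq B(0,D^n)$, where $H_n:=f_{n-1}\circ\cdots\circ f_0$ (the right-hand inclusion is immediate; the left-hand one because $\|f_j(z)\|\ge C\|z\|$ on $\mathbb{B}$ forces $B(0,C)\subseteq f_j(\mathbb{B})$ and $f_j^{-1}(B(0,\rho))\subseteq B(0,\rho/C)$ for $\rho\le C$). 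Thus inside $\iota_n(\mathbb{B})\cong\mathbb{B}$ the subdomain $\iota_0(\mathbb{B})$ appears as $\Sigma_n:=H_n(\mathbb{B})$, a region that shrinks to the origin but still engulfs the ball $B(0,C^n)$.

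Now let $\varphi$ be a bounded plurisubharmonic function on $\Omega$ and set $v_n:=\varphi\circ\iota_n$, a plurisubharmonic function on $\mathbb{B}$ with $|v_n|\le\sup_\Omega|\varphi|$, with $v_n(0)=\varphi(o)=:c$ for every $n$, and with $v_k=v_n\circ H_{n,k}$ for $n>k$, where $H_{n,k}:=f_{n-1}\circ\cdots\circ f_k$ (so $H_{n,0}=H_n$). It suffices to show that every $v_n$ is constant, for then $\varphi\equiv c$ on each $\iota_n(\mathbb{B})$. The inclusion $H_{n,k}(\mathbb{B})\subseteq B(0,D^{n-k})$ gives $\sup_{\mathbb{B}}v_k=\sup_{H_{n,k}(\mathbb{B})}v_n\le\sup_{B(0,D^{n-k})}v_n$; the inclusion $B(0,C^n)\subseteq\Sigma_n$, together with $v_n=v_0\circ H_n^{-1}$ on $\Sigma_n$, gives $\operatorname{osc}_{B(0,C^n)}v_n\le\operatorname{osc}_{\mathbb{B}}v_0$.

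Next I would use normal families and Hartogs' lemma. The $v_n$ form a uniformly bounded family of plurisubharmonic functions, so along a subsequence they converge in $L^1_{\mathrm{loc}}(\mathbb{B})$ to a plurisubharmonic $v_\infty\le\sup_\Omega\varphi$. Hartogs' lemma gives $\limsup_n\sup_{B(0,\rho)}v_n\le\sup_{B(0,\rho)}v_\infty$ for each fixed $\rho$; feeding in the first estimate above and letting $\rho\to0$ yields $\sup_{\mathbb{B}}v_k\le v_\infty(0)$ for every $k$, hence $\sup_\Omega\varphi=\sup_k\sup_{\mathbb{B}}v_k\le v_\infty(0)$. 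Therefore $v_\infty(0)=\sup_\Omega\varphi=:M$, and by the maximum principle $v_\infty\equiv M$. Since every $L^1_{\mathrm{loc}}$-cluster value of $(v_n)$ is thus the constant $M$, the whole sequence converges: $v_n\to M$ in $L^1_{\mathrm{loc}}(\mathbb{B})$.

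It remains to deduce that $v_0$ itself (hence $\varphi$) equals $M$, and this is the step I expect to be the real obstacle. One has at hand that $v_n\to M$ in $L^1_{\mathrm{loc}}$, that $v_n(0)=c$, and that $v_n$ restricted to $B(0,C^n)$ is a biholomorphic copy of $v_0$; these must be combined to force $\operatorname{osc}_{\mathbb{B}}v_0=0$. The difficulty is genuine, and it is exactly where the hypothesis \eqref{eq:uniform} is indispensable: plurisubharmonic functions carry no modulus of continuity, and the single low value of $v_n$ at the origin is invisible to the $L^1$ norm, so one must use the non-collapsing $B(0,C^n)\subseteq H_n(\mathbb{B})$ quantitatively — transporting the behaviour of $v_n$ at the microscopic scale $C^n$ near $o$ out to a fixed scale. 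This is the same mechanism that underlies Theorem~\ref{thm:repeat}: long coherent stretches of the dynamics force $\mathbb{C}^m$-like behaviour, and Forn{\ae}ss's Short $\mathbb{C}^2$ (Theorem~\ref{thm:short}) is precisely the case in which the lower bound, and hence this mechanism, is absent. Concretely I would attempt to close the argument either by iterating the two-sided bound $\operatorname{osc}_{B(0,C^n)}v_n\le\operatorname{osc}_{\mathbb{B}}v_0\le\operatorname{osc}_{B(0,D^n)}v_n$ along the chain of transition maps, or by passing to a subsequence along which the linear parts of the $f_n$ converge, producing in the limit a holomorphic self-map $g$ of $\mathbb{B}$ with relatively compact image, for which an identity $v=v\circ g$ would collapse $v$ to a constant.
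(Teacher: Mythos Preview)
Your argument is incomplete, and you say so yourself: after establishing that $v_n\to M$ in $L^1_{\mathrm{loc}}(\mathbb B)$ you write ``It remains to deduce that $v_0$ itself equals $M$, and this is the step I expect to be the real obstacle.'' That step is indeed the whole point, and neither of your two proposed endings works. The $L^1_{\mathrm{loc}}$ convergence carries no information about $v_n$ on the shrinking sets $H_n(\mathbb B)\subset B(0,D^n)$, so it cannot by itself constrain $v_0$; and your ``limit map'' idea only yields $M=M\circ g$, which is vacuous once you already know every cluster value is the constant $M$. Notice also that the oscillation bound $\mathrm{osc}_{B(0,C^n)}v_n\le\mathrm{osc}_{\mathbb B}v_0$ you record is never actually used --- the lower bound $C$ has not entered your argument in any effective way.

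The missing idea, and the one the paper uses, is a Hadamard three--circles estimate. For each $n$ form the radial supremum $\sigma_n(r)=\sup_{\|w\|=r}v_n(w)$; this is a convex function of $\log r$. Normalising so that $\sup_\Omega\varphi=0$ and $\sup_{\mathbb B}\varphi\le -1$ (which one may arrange after shifting the starting index, using the strong maximum principle on a slightly smaller ball), the inclusion $B(0,C^n)\subset H_n(\mathbb B)$ gives $\sigma_n(r)\le -1$ for $r<C^n$, while $\sigma_n(r)\le 0$ for $r<1$. Log-convexity then forces
\[
\sigma_n(r)\le \frac{\log r}{n\log C}\cdot(-1),\qquad C^n\le r<1.
\]
For $z\in B_k$ one has $\|H_n(z)\|\le D^{n-k}$, hence $\varphi(z)=v_n(H_n(z))\le\sigma_n(D^{n-k})\le -\dfrac{(n-k)\log D}{n\log C}$; letting $n\to\infty$ gives $\varphi\le -\dfrac{\log D}{\log C}<0$ on every $B_k$, contradicting $\sup_\Omega\varphi=0$. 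This is a two-line computation once the right auxiliary function $\sigma_n$ is on the table, and it is exactly the quantitative comparison of the scales $C^n$ and $D^{n-k}$ that your qualitative compactness argument cannot supply.
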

\begin{proof}
Assume that there does exist a bounded non-constant pluri-subharmonic function $\rho$ on $\Omega_{(f_n)}$. Define
$$
B_n = (f_{n-1} \circ \cdots \circ f_0)^{-1} \mathbb B.
$$
We can assume that $\rho=-1$ on $B_0 = \mathbb B$, and $\sup \rho=0$.
Consider $\rho$ restricted to $B_n$. We get a pluri-subharmonic function $\rho_n=
\rho \circ (f_{n-1} \circ \cdots \circ f_0)^{-1}$ on the unit ball $\mathbb B$. Let
$$
\sigma_n(z)=\sup \{\rho_n(w); \|w\|=\|z\|\}.
$$
Then $\sigma_n(z)= -1$ if $\|z\|<C^n.$
Since $\sigma_n$ is logarithmically convex we have that
$$
\sigma_n(z) \leq \frac{-1}{n\log C} \log |z|
$$
on the unit ball. Next, fix $k$ and assume that $n>k$. On $B_k$ we have that $|f^n(z)| \leq D^{n-k}$.
Hence
$$
\sigma_n(z)\leq \frac{-1}{n\log C} (n-k) \log D,
$$
and therefore also
$$
\rho(z)\leq  \frac{-1}{n\log C} (n-k) \log D
$$
on $B_k.$ Since this is true for all $n>k$ we have that
$$
\rho(z)\leq \frac{-\log D}{\log C}<0
$$
on $B_k$. Since this also holds for all $k$ and $\Omega_{(f_n)} = \bigcup B_k$ we cannot have
$\sup \rho=0.$
\end{proof}

Another natural question is the following.

\begin{problem}
Does $\Omega_{(f_n)}$ contain embedded or immersed complex lines? In particular, does there exist a holomorphic line through every point, and in every direction?
\end{problem}

We note that in the Short $\mathbb{C}^2$ example of Theorem \ref{thm:short}, the complex lines lying in $\Omega$ are severely restricted, since they must lie within level-sets of the pluri-subharmonic function. In particular one can show that for some points $p \in \Omega$ and most tangent vectors $v$ at $p$, there exists no holomorphic map $\phi : \mathbb C \rightarrow \Omega$ with $\phi(0) = p$ and $\phi^\prime(0) = v$. We will proceed to show that this is not the case in our setting, see Corollary \ref{cor:lines} below.

In what follows we will write $\Delta(r) \subset \mathbb C$ for the disk or radius $r$, and $\Delta$ for the unit disk.

\begin{lemma}\label{lemma:JE1}
Fix constants $R>1, c<1, r<1.$ Then there exist $L>0, \delta>0$ so that for every $\epsilon>0$ and all large enough $N$ we have uniformly for any
analytic function $g(z)=\sum a_n z^n:\Delta\rightarrow \Delta$ that
\begin{align}
\label{eq:i} \left|\sum_{n\leq LN} a_nz^n\right| & \leq R^N & \mbox{if} & \;|z|< 1+\delta, \; \; \; \mathrm{and}\\
\label{eq:ii} \left|\sum_{n>LN} a_n z^n\right| & \leq c^N \epsilon & \mbox{if} & \; |z|< r.
\end{align}
\end{lemma}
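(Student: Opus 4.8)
The plan is to reduce the whole statement to the single Cauchy estimate $|a_n|\le 1$ for all $n\ge 0$, which holds because $g$ maps $\Delta$ into $\Delta$ (integrate $g(z)/z^{n+1}$ over $|z|=\rho$ and let $\rho\uparrow 1$). This one fact already makes every bound below uniform in $g$, so uniformity needs no further discussion. Granting it, the low-degree piece is dominated on $\{|z|<1+\delta\}$ by the geometric sum $\sum_{n=0}^{\lfloor LN\rfloor}(1+\delta)^n \le (1+\delta)^{LN+1}/\delta$, and the tail is dominated on $\{|z|<r\}$ by $\sum_{n>\lfloor LN\rfloor} r^n \le r^{LN}/(1-r)$. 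Thus the lemma becomes: choose $L$ and $\delta$ so that $(1+\delta)^{LN+1}/\delta \le R^N$ and $r^{LN}/(1-r)\le c^N\epsilon$ for all large $N$.

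First I would fix $L$, and here it is the tail estimate \eqref{eq:ii} that dictates the choice: I want $r^L<c$, i.e. $L>\log c/\log r$ (both logarithms are negative, so the ratio is a finite positive number). With such an $L$ one has $(r^L/c)^N\to 0$, hence for every $\epsilon>0$ there is $N_2(\epsilon)$ with $(r^L/c)^N\le (1-r)\epsilon$ for all $N\ge N_2(\epsilon)$, which is exactly \eqref{eq:ii}. The threshold $N_2$ genuinely depends on $\epsilon$, which the statement permits.

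Next, with $L$ now frozen, I would pick $\delta\in(0,1)$ so small that $(1+\delta)^L<R$; this is possible since $(1+\delta)^L\to 1<R$ as $\delta\to 0^+$. Then $\big((1+\delta)^L/R\big)^N\to 0$, so there is an $N_1$, depending only on $L,\delta,R$ (and in particular not on $\epsilon$ or $g$), with $\tfrac{1+\delta}{\delta}\big((1+\delta)^L/R\big)^N\le 1$ for $N\ge N_1$; rearranging gives $(1+\delta)^{LN+1}/\delta\le R^N$, which is \eqref{eq:i}. Finally, for any $\epsilon>0$ both estimates hold as soon as $N\ge\max(N_1,N_2(\epsilon))$, giving the single threshold in the statement.

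There is no deep obstacle here; the only point requiring care is the order of quantifiers. The tail estimate \eqref{eq:ii} forces $L$ to be reasonably large, and one must check that after committing to this $L$ there is still room left for \eqref{eq:i}. There is, because \eqref{eq:i} constrains only $\delta$ for the given $L$ (not $L$ itself), and $\delta$ can always be shrunk far enough that $(1+\delta)^L<R$. One should also remember that the "large enough $N$" in \eqref{eq:i} is independent of $\epsilon$ whereas the one in \eqref{eq:ii} is not, and take the maximum of the two.
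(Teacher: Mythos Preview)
Your proof is correct and follows essentially the same route as the paper: reduce to $|a_n|\le 1$ via Cauchy estimates, bound the two pieces by geometric sums, then choose first $L$ so that $r^L<c$ and afterwards $\delta$ so that $(1+\delta)^L<R$. Your additional remarks on the order of quantifiers and on which threshold depends on $\epsilon$ are accurate and make the dependence structure more explicit than the paper does.
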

\begin{proof}
From the Cauchy Estimates we know that $|a_n|\leq 1$ for all $n$.  Hence the estimates \eqref{eq:i} and \eqref{eq:ii}
follow if
\begin{align}
\label{eq:i'} \frac{(1+\delta)^{LN+1}-1}{\delta} & < R^N, \; \; \mathrm{and}\\
\label{eq:ii'} \frac{r^{LN}}{1-r} & < c^N\epsilon.
\end{align}

Next choose first $L$ so that $r^L<c$ and then choose $\delta$ so that
$(1+\delta)^L<R.$ Then both \eqref{eq:i'} and \eqref{eq:ii'} hold for all large enough $N$ independent of $g$.
\end{proof}

We consider the basin $\Omega_{(f_n)}$ of the sequence of automorphisms $(f_j)$ with
$C\|z\|\leq \| f_j(z)\| \leq D \|z\|$ on the unit ball, $0<C<D<1$. Without lack of generality we can assume that for every $z,w$ in the unit ball,
$\|f_j(z)-f_j(w)\|\geq C\|z-w\|.$ For the rest of this section we will write $f^n$ for the composition of the first $n$ maps, i.e.
$$
f^n = f_{n-1} \circ \cdots \circ f_0.
$$
Similarly we write $f^{-n}$ for the inverse of $f^n$.

\begin{lemma}\label{lemma:JE3}
Let $\eta<1/2.$ Suppose that $z\in f_j(\mathbb B(0,1/2))$ and suppose that
$\|w-z\|< \eta C.$ Then $\|f_j^{-1}(w)-f_j^{-1}(z)\|< \eta$. In particular,
$f_j^{-1}(w)\in \mathbb B(0,1).$
\end{lemma}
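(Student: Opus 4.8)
The plan is to exploit the one-sided Lipschitz bound $\|f_j(a)-f_j(b)\|\ge C\|a-b\|$, which by hypothesis is guaranteed only on the unit ball $\mathbb B$, together with a continuity argument that prevents $f_j^{-1}(w)$ from escaping the region where that bound applies.

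First I would set $z':=f_j^{-1}(z)$, so that $\|z'\|<1/2$ by the assumption $z\in f_j(\mathbb B(0,1/2))$. Since $\eta<1/2$ this gives $\overline{\mathbb B(z',\eta)}\subset\mathbb B$, so the lower Lipschitz estimate is available for every pair of points of $\overline{\mathbb B(z',\eta)}$. The whole statement then reduces to showing $f_j^{-1}(w)\in\mathbb B(z',\eta)$: this is the first conclusion, and it also gives the second, since $\|f_j^{-1}(w)\|\le\|z'\|+\eta<1/2+1/2=1$.

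The key step is a path-following argument. Consider the segment $\gamma(t)=(1-t)z+tw$, $t\in[0,1]$, which lies in the \emph{open} ball $\mathbb B(z,\eta C)$ because $\|w-z\|<\eta C$. Since $f_j$ is an automorphism, $\phi:=f_j^{-1}\circ\gamma$ is a well-defined continuous path with $\phi(0)=z'$ and $\phi(1)=f_j^{-1}(w)$, and I claim that $\|\phi(t)-z'\|<\eta$ for all $t\in[0,1]$. If this fails, the set $\{t:\|\phi(t)-z'\|\ge\eta\}$ is a nonempty closed subset of $[0,1]$, so it has a smallest element $t_1$; moreover $t_1>0$ since $\phi(0)=z'$, and by continuity $\|\phi(t_1)-z'\|=\eta$ while $\|\phi(t)-z'\|<\eta$ for $t<t_1$. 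Thus $\phi(t)\in\overline{\mathbb B(z',\eta)}\subset\mathbb B$ for all $t\in[0,t_1]$, and applying the lower Lipschitz bound to $a=\phi(t_1)$, $b=z'$, together with $f_j(\phi(t_1))=\gamma(t_1)$ and $f_j(z')=z$, yields $\eta C=C\|\phi(t_1)-z'\|\le\|\gamma(t_1)-z\|<\eta C$, a contradiction because $\gamma(t_1)$ lies in the open ball $\mathbb B(z,\eta C)$. Hence the claim holds, and taking $t=1$ gives $\|f_j^{-1}(w)-f_j^{-1}(z)\|<\eta$, finishing the proof.

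The main (and essentially only) obstacle is the temptation to simply write $\|f_j^{-1}(w)-f_j^{-1}(z)\|\le C^{-1}\|w-z\|$: the inequality $\|f_j(a)-f_j(b)\|\ge C\|a-b\|$ is a local statement on $\mathbb B$, and a priori $f_j^{-1}(w)$ might land far outside $\mathbb B$, where nothing is known. The continuity/connectedness argument along $\gamma$ is exactly what keeps $\phi(t)$ trapped in $\overline{\mathbb B(z',\eta)}$, so that the estimate can be legitimately invoked at the decisive moment.
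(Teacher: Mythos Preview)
Your proof is correct and follows the same idea as the paper's: since $f_j$ sends every point of $\partial\mathbb B(z',\eta)\subset\mathbb B$ to distance at least $\eta C$ from $z$, the open image $f_j(\mathbb B(z',\eta))$ must contain $\mathbb B(z,\eta C)\ni w$, forcing $f_j^{-1}(w)\in\mathbb B(z',\eta)$. The paper compresses this into one line by invoking the open mapping theorem, whereas you spell it out via an explicit path-lifting argument along the segment from $z$ to $w$; the two are essentially the same reasoning.
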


\begin{proof}
The statement follows from the open mapping theorem, since the distance from $f_j(\partial \mathbb B(f_j^{-1}(z),\eta))$ to $z$ is at least $\eta C$.
\end{proof}

\begin{lemma}\label{lemma:JE4}
Let $\eta<1/2$. Suppose that $z\in f^N(\mathbb B(0,1/2))$ and suppose that
$\|w-z\|< \eta C^N.$ Then $\|(f^N)^{-1}(w)-(f^N)^{-1}(z)\|< \eta$. In particular,
$(f^N)^{-1}(w)\in \mathbb B(0,1).$
\end{lemma}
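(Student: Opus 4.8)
The plan is an induction on $N$, peeling off the outermost map and invoking Lemma~\ref{lemma:JE3} at each step; the base case $N=1$ is Lemma~\ref{lemma:JE3} itself (with $j=0$, since $f^1=f_0$). I will also use the elementary fact that, because $\|f_j(x)\|\le D\|x\|$ with $D<1$, every partial composition maps the half-ball into itself: $f^N(\mathbb{B}(0,1/2))\subseteq \mathbb{B}(0,D^N/2)\subseteq\mathbb{B}(0,1/2)$, by a straightforward induction on $N$.

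For the inductive step, assume the statement for $N$ and take $z\in f^{N+1}(\mathbb{B}(0,1/2))$ with $\|w-z\|<\eta C^{N+1}$. Writing $f^{N+1}=f_N\circ f^N$ and using $f^N(\mathbb{B}(0,1/2))\subseteq\mathbb{B}(0,1/2)$, we get $z\in f_N(\mathbb{B}(0,1/2))$. Since $\eta C^N<1/2$, Lemma~\ref{lemma:JE3} applies to the pair $z,w$ with its constant $\eta$ replaced by $\eta C^N$, and yields
$$\|f_N^{-1}(w)-f_N^{-1}(z)\|<\eta C^N,\qquad f_N^{-1}(w)\in\mathbb{B}(0,1).$$

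Now put $z'=f_N^{-1}(z)$ and $w'=f_N^{-1}(w)$. Injectivity of $f_N$ together with $z\in f_N(f^N(\mathbb{B}(0,1/2)))$ gives $z'\in f^N(\mathbb{B}(0,1/2))$, and the displayed estimate says $\|w'-z'\|<\eta C^N$. The inductive hypothesis applied to $z',w'$ then gives $\|(f^N)^{-1}(w')-(f^N)^{-1}(z')\|<\eta$, which is exactly $\|(f^{N+1})^{-1}(w)-(f^{N+1})^{-1}(z)\|<\eta$ since $(f^{N+1})^{-1}=(f^N)^{-1}\circ f_N^{-1}$. Finally $(f^{N+1})^{-1}(z)=(f^N)^{-1}(z')\in\mathbb{B}(0,1/2)$ because $z'\in f^N(\mathbb{B}(0,1/2))$, so $(f^{N+1})^{-1}(w)\in\mathbb{B}(0,1/2+\eta)\subseteq\mathbb{B}(0,1)$, using $\eta<1/2$. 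This closes the induction.

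I do not expect a genuine obstacle: the result is a direct iteration of Lemma~\ref{lemma:JE3}. The only point that needs care is the bookkeeping of the shrinking gap, namely that at stage $N$ one must feed Lemma~\ref{lemma:JE3} the constant $\eta C^N$ rather than $\eta$, and check that $\eta C^N<1/2$ (automatic from $C<1$ and $\eta<1/2$). An alternative that avoids induction is to repeat the proof of Lemma~\ref{lemma:JE3} verbatim for $f^N$: composing the one-step estimate $\|f_j(a)-f_j(b)\|\ge C\|a-b\|$ gives $\|f^N(a)-f^N(b)\|\ge C^N\|a-b\|$ on $\mathbb{B}$, so the distance from $f^N(\partial\mathbb{B}((f^N)^{-1}(z),\eta))$ to $z$ is at least $\eta C^N$, and the open mapping theorem for the injective holomorphic map $f^N$ finishes it.
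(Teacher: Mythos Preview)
Your proof is correct and follows the same approach as the paper, which simply states that the result follows from applying Lemma~\ref{lemma:JE3} $N$ times; your write-up makes the bookkeeping explicit (feeding in $\eta C^N$ at the outermost step and using $f^N(\mathbb B(0,1/2))\subset\mathbb B(0,1/2)$), but the idea is identical.
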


\begin{proof}
The statement follows from applying the previous lemma $N$ times.
\end{proof}

\begin{lemma}\label{lemma:JE2}
Let $r<1.$ Then there exists a $\delta>0$ so that for all holomorphic maps
$F:\overline{\Delta}\rightarrow \Omega$ and all $\epsilon>0$, there exists
a holomorphic map $G:\Delta(0,1+\delta) \rightarrow \Omega$ such that
$|F-G|<\epsilon$ for $|z|<r$ and $G-F=\mathcal O(z^2).$
\end{lemma}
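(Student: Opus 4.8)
The plan is to enlarge $F$ in three moves: push it deep into the basin, where it becomes uniformly tiny; replace it by a polynomial Taylor truncation small enough to still take its values in $\mathbb{B}$ over the enlarged disk; and pull the truncation back by an iterate of the sequence, controlling everything with Lemmas~\ref{lemma:JE1}, \ref{lemma:JE3} and \ref{lemma:JE4}. We may assume $\epsilon<1/2$.

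Since $F(\overline{\Delta})$ is a compact subset of $\Omega=\bigcup_N (f^N)^{-1}(\mathbb{B}(0,1/2))$ and these open sets increase, there is an $N_1$ with $F(\overline{\Delta})\subset (f^{N_1})^{-1}(\mathbb{B}(0,1/2))$; hence $f^j(F(z))\in\mathbb{B}(0,1/2)$ for every $z\in\overline{\Delta}$ and every $j\ge N_1$, and in particular $\|f^M(F(z))\|\le\rho_M:=D^{M-N_1}/2$ for all such $z$ and all $M\ge N_1$. I also note that $(f^M)^{-1}(\mathbb{B})\subset\Omega$, because $\|f_{n-1}\circ\cdots\circ f_M(w)\|\le D^{n-M}\|w\|\to0$ for $w\in\mathbb{B}$.

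Now apply Lemma~\ref{lemma:JE1} with the given $r$, with $c:=C$, and with $R:=1/\sqrt{D}$, producing constants $L,\delta>0$ depending only on $r$ and the fixed $C,D$; this $\delta$ is the one claimed. For a large $M$ to be chosen below put $H:=f^M\circ F=(h^{(1)},\dots,h^{(m)})$, apply Lemma~\ref{lemma:JE1} to the disk self-maps $g^{(i)}:=h^{(i)}/\rho_M:\Delta\to\Delta$ with the parameter ``$\epsilon$'' equal to the given $\epsilon$ and with truncation index $N$, and let $\tilde H$ be the degree-$LN$ Taylor polynomial of $H$; then for all large $N$,
\[
\|\tilde H(z)\|\le\sqrt{m}\,\rho_M R^{N}\ \ (|z|<1+\delta),\qquad \|\tilde H(z)-H(z)\|\le\sqrt{m}\,\rho_M C^{N}\epsilon\ \ (|z|<r).
\]
The crux is to choose $M$ and $N$ compatibly. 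From the first estimate, $\tilde H$ maps $\Delta(0,1+\delta)$ into $\mathbb{B}$ as soon as $N\log R<\log(2/\sqrt m)+(M-N_1)\log(1/D)$, i.e. (for $R=1/\sqrt D$) essentially $N\le 2(M-N_1)$. For the approximation, invert $f^M$ in two stages: the deep stage $f_{M-1}\circ\cdots\circ f_{N_1}$, handled by iterating Lemma~\ref{lemma:JE3} (equivalently Lemma~\ref{lemma:JE4}) since the pertinent iterates $f^j(F(z))$ lie in $\mathbb{B}(0,1/2)$ for $j\ge N_1$, which turns the error $\sqrt m\,\rho_M C^N\epsilon$ into an error at level $N_1$ of at most $\eta_0:=\sqrt m\,\rho_M C^N\epsilon\,C^{-(M-N_1)}=(\sqrt m\,\epsilon/2)(D/C)^{M-N_1}C^{N}$; and the shallow stage $(f^{N_1})^{-1}$, a fixed biholomorphism, hence $\Lambda$-Lipschitz with $\Lambda=\Lambda(F)$ on the relevant bounded set. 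So it suffices that $\Lambda\eta_0<\epsilon$ (and $\eta_0<1/2$, to invoke Lemma~\ref{lemma:JE4}), i.e. $(D/C)^{M-N_1}C^{N}<2/(\sqrt m\,\Lambda)$ together with a like inequality; these amount to $N\ge\kappa(M-N_1)+\mathrm{const}$ with $\kappa:=\log(D/C)/\log(1/C)$. Because $0<\kappa<1<2$, for every sufficiently large $M$ this leaves a nonempty window of integers $N$, and we may also arrange $N\ge N_0(\epsilon)$ as required by Lemma~\ref{lemma:JE1}; fix such $M$ and $N$.

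Finally set $G:=(f^M)^{-1}\circ\tilde H$ on $\Delta(0,1+\delta)$: it is holomorphic, it takes values in $\mathbb{B}$ after $\tilde H$ and hence in $\Omega$, on $\Delta(r)$ the two-stage bound gives $\|G-F\|\le\Lambda\eta_0<\epsilon$, and since $\tilde H-H=\mathcal{O}(z^{LN+1})$ we get $G-F=(f^M)^{-1}\circ\tilde H-(f^M)^{-1}\circ H=\mathcal{O}(z^{LN+1})=\mathcal{O}(z^2)$. I expect the one genuine difficulty to be exactly the balancing above: forcing the truncation into $\mathbb{B}$ caps its degree at order $M$, whereas the at-worst $C^{-M}$ expansion of $(f^M)^{-1}$ — faster than the $D^{-M}$ decay of $f^M\circ F$ — demands degree at least of order $\kappa M$, and the two are reconcilable only because $\kappa<1$ and because $R$ may be taken near $1$ (here $R=1/\sqrt D$ already does it). The rest — the iteration of Lemmas~\ref{lemma:JE3}--\ref{lemma:JE4}, the separate treatment of the first $N_1$ maps, and absorbing the $F$-dependent constant $\Lambda$ by enlarging $M$ — is routine bookkeeping.
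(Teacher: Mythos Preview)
Your proof is correct and follows the same push-forward/truncate/pull-back strategy as the paper, invoking Lemmas~\ref{lemma:JE1} and~\ref{lemma:JE4} in the same roles. The paper streamlines the bookkeeping by first reducing to the case $F(\Delta)\subset\mathbb{B}(0,1/2)$ (absorbing your ``shallow stage'' into that reduction) and by taking $R=1/D$, $c=C/D$ in Lemma~\ref{lemma:JE1}, which allows the iteration depth and the truncation parameter to be the \emph{same} $N$ and makes the final error come out to exactly $D^{N}c^{N}\epsilon/C^{N}=\epsilon$, so the whole balancing of $M$ against $N$ that you carry out becomes unnecessary.
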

\begin{proof}
We will use Lemma \ref{lemma:JE1} with the choices $R=\frac{1}{D}$ and $c=\frac{C}{D}.$ This will determine values $L$ and $\delta.$ Let $\epsilon>0.$ Assume now that $F:\overline{\Delta} \rightarrow \Omega.$ Fix any $n_0$ so that
$f^{n_0}\circ F(\Delta) \subset \mathbb B(1/2).$ It suffices to find a $G'$ which approximates $f^{n_0}\circ F,$ because then we can set $G=f^{-n_0}\circ G'.$ Hence we will assume that $F(\Delta)\subset \mathbb B(1/2).$

Next write
$f^{N}\circ F=(g_N,h_N).$ Then the functions $g'_N=\frac{2}{b^N} g_N,
h'_N=\frac{2}{b^N} h_N$ are maps of the unit disc into the unit disc. Let $\hat{g}'_N,\hat{h}'_N$
denote the sum of the terms up to degree $LN$ of the respective series for $g'_N,h'_N$ and also
$\tilde{g}'_N,\tilde{h}'_N$ the respective remainders. We use similar notation for $g_N,h_N.$
Then for all large enough $N$ we have that
$|\hat{g}'_N|,|\hat{h}'_N|<\frac{1}{D^N}$ if $|z|<1+\delta$ and
$|\tilde{g}'_N|<c^N\epsilon$ and $|\tilde{h}'_N|<c^N\epsilon$ if $|z|<r$. Hence $D^N \hat{g}'_N(z)$ and $D^N \hat{h}'_N(z)$
lie in the unit disc if $|z|<1+\delta.$ Therefore $\hat{g}_N:=\frac{D^N}{2}\hat{g'}_N$ and $\hat{h}_N:=\frac{D^N}{2}\hat{h'}_N$ map $\Delta(1+\delta)$ holomorphically into $\Delta(1/2)$.
It follows that
$$
f^{-N}(\hat{g}_N,\hat{h}_N)(\Delta(1+\delta))\subset \Omega.
$$

It remains to show that $f^{-N}\circ (\hat{g}_N,\hat{h}_N)$ approximates $F$ well on $\Delta(0,r).$ Notice that $|\tilde{g}'_N|,|\tilde{h}'_N|<c^N \epsilon$ if $|z|<r$. Hence
$$
\|(g'_N,h'_N)-(\hat{g}'_N,\hat{h}'_N)\|<2c^N\epsilon
$$
on $|z|<r$. Therefore
$$
\|(g_N,h_N)-(\frac{D^N}{2}\hat{g}'_N,\frac{D^N}{2}\hat{h}'_N)\|< D^Nc^N\epsilon
$$
on $|z|<r$.
Hence
$$
\|f^{N}\circ F-(\hat{g}_N,\hat{h}_N)\|<D^Nc^N\epsilon
$$
on $|z|<r$. Hence it follows from Lemma \ref{lemma:JE4} that
$$
\| F-f^{-N}(\hat{g}_N,\hat{h}_N)\|<\frac{D^Nc^N}{C^N}\epsilon=\epsilon
$$
on $|z|<r$.
\end{proof}

After inductive use of this lemma we obtain the following.

\begin{corollary}\label{cor:lines}
Let $p \in \Omega_{(f_n)}$ and $v \in T_p(\mathbb C^2)$. Then there exists a holomorphic map $\psi: \mathbb C \rightarrow \Omega_{(f_n)}$ with $\psi(0) = p$ and $\psi^\prime(p) = v$.
\end{corollary}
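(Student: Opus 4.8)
The plan is to realize $\psi$ as a locally uniform limit of holomorphic maps $\psi_j$ defined on disks $\Delta(R_j)$ whose radii $R_j$ tend to infinity, constructed by iterating Lemma~\ref{lemma:JE2} after a rescaling at each step. I would first fix $t>0$ small enough that the disk $\{p+zv:|z|\le t\}$ is contained in $\Omega_{(f_n)}$ (possible since $\Omega_{(f_n)}$ is open and contains $p$), and take $\psi_0(z):=p+zv$, which is entire, maps $\overline{\Delta(t)}$ into $\Omega_{(f_n)}$, and satisfies $\psi_0(0)=p$, $\psi_0'(0)=v$. Fix any $r\in(0,1)$, let $\delta=\delta(r)>0$ be the constant from Lemma~\ref{lemma:JE2}, put $s:=1+\tfrac\delta2$ and $R_j:=t\,s^j$, so that $\bigcup_j\Delta(R_j)=\mathbb C$.

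For the inductive step, suppose $\psi_j$ is holomorphic on a neighbourhood of $\overline{\Delta(R_j)}$ with values in $\Omega_{(f_n)}$ and $\psi_j(0)=p$, $\psi_j'(0)=v$. Then $\hat\psi_j(z):=\psi_j(R_jz)$ maps a neighbourhood of $\overline\Delta$ into $\Omega_{(f_n)}$, so Lemma~\ref{lemma:JE2}, applied with a parameter $\epsilon_j>0$ to be chosen, produces $\hat\psi_{j+1}:\Delta(1+\delta)\to\Omega_{(f_n)}$ with $|\hat\psi_{j+1}-\hat\psi_j|<\epsilon_j$ on $\Delta(r)$ and $\hat\psi_{j+1}-\hat\psi_j=\mathcal O(z^2)$. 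Setting $\psi_{j+1}(z):=\hat\psi_{j+1}(z/R_j)$ gives a holomorphic map on $\Delta((1+\delta)R_j)\supset\overline{\Delta(R_{j+1})}$ with values in $\Omega_{(f_n)}$; the $\mathcal O(z^2)$ property survives the rescaling, so $\psi_{j+1}(0)=p$ and $\psi_{j+1}'(0)=v$, and one has $|\psi_{j+1}-\psi_j|<\epsilon_j$ on $\Delta(rR_j)$. If $\sum_j\epsilon_j<\infty$ then on any disk $\Delta(\rho)$, eventually $rR_j>\rho$, so the bound $|\psi_{j+1}-\psi_j|<\epsilon_j$ holds there; hence $\psi_j\to\psi$ uniformly on compact subsets of $\mathbb C$, with $\psi$ holomorphic, $\psi(0)=p$ and $\psi'(0)=v$ (the first jets converge because the convergence is locally uniform).

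The step I expect to be the main obstacle is showing that $\psi$ takes its values in $\Omega_{(f_n)}$ and not merely in its closure. Here one uses that $\epsilon_j$ may be chosen after $\psi_j$ is already determined. For each integer $m\ge1$ let $J_m$ be the least index with $rR_{J_m}>m$; then $K_m:=\psi_{J_m}(\overline{\Delta(m)})$ is a compact subset of the open set $\Omega_{(f_n)}$, so $d_m:=\mathrm{dist}(K_m,\mathbb C^2\setminus\Omega_{(f_n)})>0$ (if $\Omega_{(f_n)}=\mathbb C^2$ there is nothing to prove). At the moment $\epsilon_j$ is picked, every $d_m$ with $J_m\le j$ is known, so one can choose the $\epsilon_j$ decreasing fast enough that both $\sum_j\epsilon_j<\infty$ and $\sum_{i\ge J_m}\epsilon_i<d_m$ for every $m$. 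Then for $|w|\le m$ we get $|\psi(w)-\psi_{J_m}(w)|\le\sum_{i\ge J_m}\epsilon_i<d_m$, so $\psi(w)$ lies in the open $d_m$-neighbourhood of $K_m$, which is contained in $\Omega_{(f_n)}$. Thus $\psi(\mathbb C)\subset\Omega_{(f_n)}$, which proves Corollary~\ref{cor:lines}. The remaining bookkeeping --- driving $R_j\to\infty$ while keeping the hypothesis ``$\overline\Delta\to\Omega$'' of Lemma~\ref{lemma:JE2} available at each stage --- is routine given the rescaling scheme above.
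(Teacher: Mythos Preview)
Your proposal is correct and follows exactly the approach the paper indicates: the paper's entire proof is the sentence ``After inductive use of this lemma we obtain the following,'' and what you have written is a careful execution of precisely that induction, including the rescaling needed to reapply Lemma~\ref{lemma:JE2}, the preservation of the $1$-jet via the $\mathcal O(z^2)$ condition, and the bookkeeping with the $\epsilon_j$ to keep the limit inside $\Omega_{(f_n)}$ rather than its closure.
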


Applying the arguments in Lemmas \ref{lemma:JE1} and \ref{lemma:JE2} to maps of balls, we can even get the following.

\begin{corollary}\label{cor:image}
For any point $p \in \Omega_{(f_n)}$ there exists a holomorphic map $\Phi: \mathbb C^2 \rightarrow \Omega_{(f_n)}$, biholomorphic in a neighborhood of the the origin, such that $\Phi(0) = p$.
\end{corollary}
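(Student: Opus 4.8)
The plan is to run the scheme of Lemma~\ref{lemma:JE2} and Corollary~\ref{cor:lines} with disks replaced by balls of $\mathbb C^2$. The heart of the matter is the several-variables version of Lemma~\ref{lemma:JE2}: \emph{for fixed $r<1$ there is $\delta>0$ such that for every holomorphic map $F:\overline{\mathbb B}\to\Omega$ and every $\epsilon>0$ there is a holomorphic map $G:\mathbb B(0,1+\delta)\to\Omega$ with $\|F-G\|<\epsilon$ on $\mathbb B(0,r)$ and $G-F=\mathcal O(\|z\|^2)$.} Granting this, fix $\rho>0$ with $\mathbb B(p,\rho)\subset\Omega$ (possible since $\Omega$ is open) and start from the affine embedding $F_0(z)=p+\tfrac\rho2 z$, which sends $\overline{\mathbb B}$ into $\Omega$ and is biholomorphic near $0$ with invertible derivative. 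Then I iterate: given $F_j:\mathbb B(0,\rho_j)\to\Omega$ with $\rho_j\approx(1+\delta)^j$ — passing if necessary to a slightly smaller radius so that the closed ball stays in the domain — rescale to the unit ball, apply the ball-lemma with error $\epsilon_j$, and rescale back to obtain $F_{j+1}:\mathbb B(0,\rho_{j+1})\to\Omega$ with $\|F_{j+1}-F_j\|<\epsilon_j$ on $\mathbb B(0,r\rho_j)$ and $F_{j+1}-F_j=\mathcal O(\|z\|^2)$. Since $r\rho_j\to\infty$ and $\sum_j\epsilon_j<\infty$, the maps $F_j$ converge locally uniformly on $\mathbb C^2$ to a holomorphic map $\Phi$ with $\Phi(0)=p$; because every step alters only terms of order $\ge2$, $D\Phi(0)=DF_0(0)$ is invertible, so $\Phi$ is biholomorphic near the origin.

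To prove the ball-lemma I would follow the proof of Lemma~\ref{lemma:JE2} almost verbatim. After the reduction there (precomposing with a suitable $f^{n_0}$) we may assume $F(\mathbb B)\subset\mathbb B(0,\tfrac12)$; writing $f^N\circ F=(g_N,h_N)$ one has $\|(g_N,h_N)\|<\tfrac12 D^N$, so $g_N':=2D^{-N}g_N$ and $h_N':=2D^{-N}h_N$ take $\mathbb B$ into the unit disk in each coordinate. The only genuinely new point is the several-variables analogue of Lemma~\ref{lemma:JE1}: if $g=\sum_\alpha a_\alpha z^\alpha$ is holomorphic on $\mathbb B$ with $\sup_{\mathbb B}|g|\le1$, and $\hat g$, $\tilde g$ denote its truncation at total degree $\le LN$ and the complementary remainder, then $|\hat g|\le R^N$ on $\mathbb B(0,1+\delta)$ and $|\tilde g|\le c^N\epsilon$ on $\mathbb B(0,r)$ for all large $N$, with the very same choices $R=1/D$, $c=C/D$, $L$ with $r^L<c$, and $\delta$ with $(1+\delta)^L<R$ as in Lemma~\ref{lemma:JE1}. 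This follows from the sharp $L^2$-bound $\sum_\alpha|a_\alpha|^2\tfrac{\alpha_1!\,\alpha_2!}{(|\alpha|+1)!}\le1$ for coefficients of bounded holomorphic functions on the ball, the elementary inequality $\|z^\alpha\|\le\rho^{|\alpha|}\prod_i(\alpha_i/|\alpha|)^{\alpha_i/2}$ on $\mathbb B(0,\rho)$, and Cauchy--Schwarz: all the combinatorial factors that appear are only polynomial in $LN$ — one must use the sharp $L^2$ bound here, since a crude polydisk Cauchy estimate loses a factor $2^{|\alpha|/2}$ and is not good enough — and these polynomial factors are absorbed by taking $N$ large. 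With this in hand $\hat g_N:=\tfrac{D^N}{2}\widehat{g_N'}$ and $\hat h_N:=\tfrac{D^N}{2}\widehat{h_N'}$ map $\mathbb B(0,1+\delta)$ into $\Delta(\tfrac12)$, so $(\hat g_N,\hat h_N)$ maps into $\mathbb B(0,1)$, and $G:=f^{-N}\circ(\hat g_N,\hat h_N)$ takes $\mathbb B(0,1+\delta)$ into $B_N\subset\Omega$ (undoing the initial reduction keeps the image in $\Omega$, as in Lemma~\ref{lemma:JE2}); the bound $\|f^N\circ F-(\hat g_N,\hat h_N)\|<D^Nc^N\epsilon$ on $\mathbb B(0,r)$ combined with Lemma~\ref{lemma:JE4} gives $\|F-G\|<\epsilon$ there, and $G-F=\mathcal O(\|z\|^2)$ because the truncation discards only terms of degree $LN+1\ge2$ and $f^{-N}$ is a biholomorphism preserving the relevant one-jet.

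It remains to guarantee $\Phi(\mathbb C^2)\subset\Omega$ rather than merely $\overline\Omega$; this is the one place requiring care beyond Corollary~\ref{cor:lines}. Here I would use the exhaustion $\Omega=\bigcup_k B_k$ together with the nesting $\overline{B_k}\subset B_{k+1}$ (valid because $\|f_n(z)\|\le D\|z\|<\|z\|$ on $\overline{\mathbb B}\setminus\{0\}$, so $f_n(\overline{\mathbb B})\subset\mathbb B$, whence $\overline{B_k}=(f^k)^{-1}(\overline{\mathbb B})\subset B_{k+1}$). Inductively, once $F_j$ has been built it maps the closed ball of radius $\rho_j$ (after the harmless shrinking above) to a compact subset of $\Omega$, hence of some $B_{M_j}$, lying at positive distance $d_j$ from $\mathbb C^2\setminus B_{M_j+1}$; then I choose $\epsilon_j\le 2^{-(j+1)}\min(d_0,\dots,d_j)$. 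With this choice, for any compact $L\subset\mathbb C^2$ and any $j_0$ with $r\rho_{j_0}>\sup\{\|w\|:w\in L\}$ one gets $\|\Phi-F_{j_0}\|_{L}\le\sum_{\ell\ge j_0}\epsilon_\ell<d_{j_0}$, and therefore $\Phi(L)\subset B_{M_{j_0}+1}\subset\Omega$. Everything else is exactly as in the passage from Lemma~\ref{lemma:JE2} to Corollary~\ref{cor:lines}. The main obstacle is the several-variables truncation estimate of the second paragraph; once it is available with the same constants $L$ and $\delta$, the rest of the construction is bookkeeping.
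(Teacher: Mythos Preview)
Your proposal is correct and follows exactly the approach the paper indicates (the paper's entire proof is the single sentence ``Applying the arguments in Lemmas~\ref{lemma:JE1} and~\ref{lemma:JE2} to maps of balls, we can even get the following''); you have simply filled in the details the paper omits, including the care needed to keep the limit inside $\Omega$ rather than $\overline{\Omega}$.

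One remark on your second paragraph: the several-variables analogue of Lemma~\ref{lemma:JE1} is actually no harder than the one-variable version, and you do not need the $L^2$ coefficient bounds or Cauchy--Schwarz. Expand $g$ in homogeneous components $g=\sum_{j\ge 0} g_j$; for $z\in\mathbb B$ the slice formula
\[
g_j(z)=\frac{1}{2\pi i}\int_{|\lambda|=1}\frac{g(\lambda z)}{\lambda^{j+1}}\,d\lambda
\]
gives $\sup_{\mathbb B}|g_j|\le \sup_{\mathbb B}|g|\le 1$, and then by homogeneity $|g_j(z)|\le \rho^{j}$ on $\mathbb B(0,\rho)$ for any $\rho>0$. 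Hence $\bigl|\sum_{j\le LN} g_j\bigr|\le \frac{(1+\delta)^{LN+1}-1}{\delta}$ on $\mathbb B(0,1+\delta)$ and $\bigl|\sum_{j>LN} g_j\bigr|\le \frac{r^{LN+1}}{1-r}$ on $\mathbb B(0,r)$, which are literally the inequalities~\eqref{eq:i'} and~\eqref{eq:ii'} of Lemma~\ref{lemma:JE1}, so the same $L$ and $\delta$ work without any combinatorial bookkeeping. Your observation that the naive polydisk Cauchy estimate loses a factor $2^{|\alpha|/2}$ is correct, but the homogeneous expansion sidesteps the issue entirely.
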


As a consequence we have obtained another proof of the following.

\begin{corollary}
There are no bounded nonconstant pluri-subharmonic functions on $\Omega_{(f_n)}$.
\end{corollary}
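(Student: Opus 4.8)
The plan is to obtain this as a soft corollary of Corollary~\ref{cor:image}, exactly as one obtains Lemma~\ref{lemma:psh}, but now with no computation at all. The only external fact I would invoke is the classical Liouville property for plurisubharmonic functions: a plurisubharmonic function on $\mathbb C^n$ that is bounded above is constant. For $n=1$ this is the statement that a subharmonic function on $\mathbb C$ which is bounded above is constant; the general case follows by restricting to affine complex lines, along which plurisubharmonicity becomes subharmonicity. This is standard and can simply be quoted.

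Given a bounded plurisubharmonic function $\rho$ on $\Omega_{(f_n)}$, I would fix an arbitrary point $p \in \Omega_{(f_n)}$ and apply Corollary~\ref{cor:image} to produce a holomorphic map $\Phi : \mathbb C^2 \to \Omega_{(f_n)}$ with $\Phi(0)=p$ which is biholomorphic on a neighbourhood of the origin. Then $\rho \circ \Phi$ is plurisubharmonic on $\mathbb C^2$ and bounded, hence constant by the Liouville property above. Since $\Phi$ is biholomorphic near $0$, its image contains an open neighbourhood $V$ of $p$, and on $V$ we therefore have $\rho \equiv \rho(p)$. As $p$ was arbitrary, $\rho$ is locally constant on $\Omega_{(f_n)}$.

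To conclude I would record that $\Omega_{(f_n)}$ is connected: writing $B_n = (f_{n-1}\circ\cdots\circ f_0)^{-1}\mathbb B$ as in the proof of Lemma~\ref{lemma:psh}, the uniform contraction \eqref{eq:uniform} gives $\mathbb B = B_0 \subset B_1 \subset B_2 \subset \cdots$, each $B_n$ is connected (it is biholomorphic to a ball), and they all contain the origin; hence $\Omega_{(f_n)} = \bigcup_n B_n$ is connected, and a locally constant function on a connected set is constant, so $\rho$ is constant.

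I do not expect any genuine obstacle in this argument: all of the real content has already been carried out in Lemma~\ref{lemma:JE2} and Corollary~\ref{cor:image}, and the only points worth stating explicitly are the Liouville property for plurisubharmonic functions and the connectedness of the basin. One could alternatively avoid Corollary~\ref{cor:image} and use only Corollary~\ref{cor:lines}, restricting $\rho$ to the image of a holomorphic line $\psi:\mathbb C\to\Omega_{(f_n)}$ through each point and in each direction to conclude that $\rho$ is constant along every such line; but invoking Corollary~\ref{cor:image} is cleaner, since it directly supplies an open neighbourhood on which $\rho$ is constant.
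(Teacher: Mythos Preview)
Your argument is correct and follows essentially the same route as the paper: both use Corollary~\ref{cor:image} to pull back the bounded plurisubharmonic function to $\mathbb C^2$ via $\Phi$ and then invoke the Liouville property. The only cosmetic difference is that the paper selects $p$ to be a point at which $u$ is non-constant in every neighborhood and derives an immediate contradiction, whereas you show local constancy at every point and then appeal explicitly to connectedness; these are two phrasings of the same idea.
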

\begin{proof}
Suppose that there does exist a bounded, non-constant pluri-subharmonic function $u$. Then there exists a point $p \in \Omega_{(f_n)}$ such that $u$ is non-constant in any neighborhood of $p$. Let $\Phi$ be as in Corollary \ref{cor:image}, with $\Phi(0) = p$. Then $u \circ \Phi$ is a non-constant bounded pluri-subharmonic function on  $\mathbb C^2$, which is a contradiction.
\end{proof}

Recall that we have a natural exhaustion of $\Omega_{(f_n)}$ by relatively compact domains $U_n$, each biholomorphic to the unit ball, by writing
$$
U_n := (f_n \circ \cdots \circ f_0)^{-1}(\mathbb B).
$$
From the above proofs we see:

\begin{corollary}
For any given $n$ the map $\Phi$ can be made to arbitrarily well approximate the biholomorphic map from the unit ball to $U_n$.
\end{corollary}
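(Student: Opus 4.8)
The plan is to observe that the map $\Phi$ produced in Corollary~\ref{cor:image} depends on a free choice of \emph{initial datum}, and then to run that construction starting from the biholomorphism $\mathbb B\to U_n$ itself. Recall how $\Phi$ is obtained: having fixed $r<1$, Lemmas~\ref{lemma:JE1}--\ref{lemma:JE2} (applied to maps of balls) produce a $\delta>0$ such that, starting from \emph{any} holomorphic $\Phi_0$ defined on a neighbourhood of $\overline{\mathbb B}$, with $\Phi_0(\overline{\mathbb B})$ relatively compact in $\Omega_{(f_n)}$ and $\Phi_0$ biholomorphic near $0$, one builds inductively holomorphic maps $\Phi_j\colon \mathbb B(0,R_j)\to\Omega_{(f_n)}$ with $R_j\to\infty$ — by rescaling $\mathbb B(0,R_j)$ back to $\overline{\mathbb B}$, applying Lemma~\ref{lemma:JE2} with error $\epsilon_j$, and rescaling back (the radii are arranged so that each $\Phi_j$ is in fact holomorphic on a neighbourhood of $\overline{\mathbb B(0,R_j)}$ with relatively compact image, and $R_j\ge R_0$). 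These maps satisfy $\|\Phi_{j+1}-\Phi_j\|<\epsilon_j$ on $\mathbb B(0,rR_j)$ and $\Phi_{j+1}-\Phi_j=\mathcal O(\|z\|^2)$ at $0$; with $\sum_j\epsilon_j<\infty$ they converge locally uniformly on $\mathbb C^2$ to $\Phi\colon\mathbb C^2\to\Omega_{(f_n)}$, and since every correction vanishes to second order at $0$ one gets $\Phi(0)=\Phi_0(0)$ and $D\Phi(0)=D\Phi_0(0)$.

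Given $n$, I would take $R_0=1$ and
$$
\Phi_0 := (f_n\circ\cdots\circ f_0)^{-1} = f^{-(n+1)} ,
$$
the inverse of an automorphism of $\mathbb C^2$: it is biholomorphic on a neighbourhood of $\overline{\mathbb B}$, maps $\overline{\mathbb B}$ onto $\overline{U_n}\subset\Omega_{(f_n)}$, and its restriction to $\mathbb B$ is exactly the biholomorphism $\mathbb B\to U_n$ to be approximated. Running the construction with $\sum_j\epsilon_j<\epsilon$ yields $\Phi$ (the map of Corollary~\ref{cor:image} with $p$ taken to be the centre $f^{-(n+1)}(0)$ of $U_n$), biholomorphic near $0$ with $\Phi(0)=f^{-(n+1)}(0)$ and $D\Phi(0)=Df^{-(n+1)}(0)$, and with $\|\Phi-\Phi_0\|\le\sum_j\epsilon_j<\epsilon$ on $\mathbb B(0,r)$, since the $j$-th correction is small on $\mathbb B(0,rR_j)\supseteq\mathbb B(0,r)$ for every $j$. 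Because the constant $r<1$ in Lemmas~\ref{lemma:JE1}--\ref{lemma:JE2} may be chosen as close to $1$ as one likes — at the cost only of a smaller $\delta$, which still leaves $R_j\to\infty$ — this shows that $\Phi$ can be made to approximate $(f_n\circ\cdots\circ f_0)^{-1}|_{\mathbb B}$ as well as desired, uniformly on compact subsets of $\mathbb B$, with the correct $1$-jet at the origin, which is the assertion.

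The one point requiring care — and the reason the statement is naturally phrased on compact subsets of $\mathbb B$ rather than on $\overline{\mathbb B}$ — is that the domains $\mathbb B(0,R_j)$ must grow to all of $\mathbb C^2$ while staying inside $\Omega_{(f_n)}$, whereas the estimate $C\|z\|\le\|f_j(z)\|\le D\|z\|$ is available only on $\mathbb B$; in particular one cannot start from a ball of radius larger than $1$, since $(f_n\circ\cdots\circ f_0)^{-1}(\mathbb B(0,R))$ need not lie in $\Omega_{(f_n)}$ for $R>1$, which is exactly why the first step of the induction only improves the approximation on $\mathbb B(0,r)$. That the \emph{later}, larger balls nonetheless land in $\Omega_{(f_n)}$ is precisely the content of the truncation device already present in the proof of Lemma~\ref{lemma:JE2}: the enlarged map there is $f^{-N}\circ(\hat g_N,\hat h_N)$, where the degree-$LN$ Taylor truncations $(\hat g_N,\hat h_N)$, although defined on the larger ball $\mathbb B(0,1+\delta)$, still map into $\mathbb B(0,1/2)$, so that composing with $f^{-N}$ lands in $U_{N-1}\subset\Omega_{(f_n)}$. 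Thus I expect no new idea to be needed beyond Corollary~\ref{cor:image}: it suffices to feed its proof the distinguished initial map $(f_n\circ\cdots\circ f_0)^{-1}$ and to keep track of the radii.
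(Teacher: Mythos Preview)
Your proposal is correct and takes essentially the same approach as the paper, which does not give a separate proof but merely records the corollary with the phrase ``From the above proofs we see'', leaving the details implicit. Your account --- initializing the inductive scheme behind Corollary~\ref{cor:image} with $\Phi_0=(f_n\circ\cdots\circ f_0)^{-1}$, controlling the total error by $\sum_j\epsilon_j$, and letting $r\nearrow 1$ --- is exactly the unpacking of that remark.
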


\begin{problem}
How can one improve the map $\Phi?$
\begin{enumerate}
\item[(i)] Can we construct $\Phi$ with everywhere non-vanishing Jacobian determinant?
\item[(ii)] Can we make sure that $\Phi$ is injective?
\item[(iii)] Can we make sure that $\Phi(\mathbb C^2) = \Omega_{(f_n)}$?
\end{enumerate}
\end{problem}

Note that a positive answer to (ii) and (iii) would give a positive answer to Conjecture \ref{conj:main}. The same questions can be asked about holomorphic maps $\psi$ from $\mathbb C$ to $\Omega_{(f_n)}$:

\begin{problem} What can we say about $\psi$?
\begin{enumerate}
\item[(i)] Can we construct $\psi$ with constant rank $1$?
\item[(ii)] Can we make sure that $\psi$ is injective?
\item[(iii)] Can we make sure that $\psi$ is proper?
\end{enumerate}
\end{problem}

To end this section we discuss two models for Conjecture \ref{conj:stable}. In both cases the setting may well be explicit enough to obtain more control over the stable manifolds.

\begin{problem}
Let $d \ge 2$ and let $F$ be a holomorphic endomorphism of $\mathbb C^3$ of the form
$$
F: (z_1, z_2, z_3) \mapsto (z_1^d, G_{z_1}(z_2, z_3)).
$$
Suppose that for each $\lambda$ with $|\lambda| = 1$ the map $G_{\lambda}$ is an automorphism of $\mathbb C^2$ with an attracting fixed point at the origin. Let $p = (p_1, 0,0) \in \mathbb C^3$ with $|p_1| = 1$. Is the stable manifold $\Sigma_f^s(p)$ equivalent to $\mathbb C^2$?
\end{problem}

In this problem the translation from the the stable manifold $\Sigma_f^s(p)$ to the non-autonomous basin $\Omega_{(g_n)}$, where
$$
g_n = G_{p_1^{(d^n)}}
$$
is very concrete, and we have a better knowledge of how the maps $g_n$ vary with $n$. Can this knowledge be used to conclude anything about the non-autonomous basin of attraction? If so, it might be possible to prove Conjecture \ref{conj:stable} without proving Conjecture \ref{conj:main}.

Note that the skew product setting is not covered by Theorem \ref{thm:JV} of Jonsson and Varolin, or Theorem \ref{thm:Lyapunov} of Abate, Abbondalo and Majer. Consider for example the case where $d=2$, and assume that the eigenvalues of $DG_{z_1}(0)$ vary with $z_1$. Then we can find a periodic orbit in the $z_1$-circle on which the eigenvalues are not equal to the eigenvalues at the fixed point $z_1 = 1$. Now consider a $z_1$-orbit that remains very close to $1$ for a long time, then in a small number of steps gets close to the periodic orbit and remains close to it for very long time, then returns close to $1$ for an even longer time, and so on. We can conclude that there are no Lyapunov exponents.

\begin{problem}
Let $d \ge 2$ and let $F$ be a holomorphic endomorphism of $\mathbb C^3$ of the form
$$
F: (z_1, z_2, z_3) \mapsto (e^{2\pi i \theta} z_1, G_{z_1}(z_2, z_3)),
$$
with $\theta \in \mathbb R \setminus \mathbb Q$. Suppose that for each $\lambda$ with $|\lambda| = 1$ the map $G_{\lambda}$ is an automorphism of $\mathbb C^2$ with an attracting fixed point at the origin. Let $p = (p_1, 0,0) \in \mathbb C^3$ with $|p_1| = 1$. Is the stable manifold $\Sigma_f^s(p)$ equivalent to $\mathbb C^2$?
\end{problem}

\section{Proof of Theorem \ref{thm:main}}

Let us recall the statement of Theorem \ref{thm:main}.

\medskip

{\bf Theorem \ref{thm:main}.}
\emph{Let $(f_n)$ be a sequence of automorphisms of $\mathbb C^2$ which satisfies the conditions in Conjecture \ref{conj:main}, and suppose that the maps $f_n$ all have order of contact $k$. Assume further that the linear part of each map $f_n$ is diagonal. Then if $D^{k+1} < C$ the basin of attraction $\Omega_{(f_n)}$ is biholomorphic to $\mathbb C^2$.}

\medskip

The proof will be completed in three steps. In the first step time $[0, \infty)$ will be partitioned into intervals $I_j = [p_j, p_{j+1})$. We will refer to the intervals $I_j$ as \emph{trains}. On each train one of the coordinates will be contracted most rapidly by the derivatives $Df_n(0)$ \emph{on average}, but not for each map separately. In the second step we will find a non-autonomous conjugation on each train by linear maps so that in each train the new maps all contract the same coordinate most rapidly. This means that we can apply Lemma \ref{lemma:dominant2} below on each train separately. In the third and final step we worry about what happens at times $p_j$ where we switch from one train to the other. We refer to these three steps as respectively \emph{selecting}, \emph{directing}, and \emph{connecting} the trains.

Having directed the trains it will be easy to construct the maps $(g_n)$ and $(h_n)$ on each of the trains using the following Lemma, a special case of Lemma \ref{lemma:dominant}.

\begin{lemma}\label{lemma:dominant2}
Suppose that each map of the sequence $(f_n)$ has linear part of the form $(z, w) \mapsto (a_n z, b_n w)$, with $|b_n| \le |a_n|$. Then for any $k \ge 2$ we can find bounded sequences $(g_n)$ and $(h_n)$, with the maps $g_n$ lower triangular, such that Diagram \eqref{diagram} commutes up to jets of order $k$.
\end{lemma}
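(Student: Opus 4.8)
This is the special case of Lemma~\ref{lemma:dominant} in which the linear parts are diagonal — indeed $|b_n|\le|a_n|<1$ forces $|b_n|^2\le D|a_n|$, so the hypotheses of Lemma~\ref{lemma:dominant} hold with $\xi=D$ — so strictly speaking nothing new is needed. Nonetheless I would give the direct proof, which in this diagonal setting is a transparent non-autonomous Poincar\'e--Dulac normalization carried out degree by degree, exactly in the spirit of the autonomous construction recalled above. Write $L_n(z,w)=(a_nz,b_nw)$ for the linear part of $f_n$, recall that $C\le|a_n|,|b_n|\le D<1$, and that the $f_n$, being uniformly bounded on $\mathbb B$, have uniformly bounded $k$-jets. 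The plan is to prove by induction on $d=2,\dots,k$ that there exist uniformly bounded polynomial maps $h_n$ (the identity plus terms of order $\ge 2$) of degree $<d$ and uniformly bounded lower triangular polynomials $g_n$ of degree $<d$ with $g_n\circ h_n=h_{n+1}\circ f_n+O(\|z\|^{d})$ uniformly in $n$; the base case $d=2$ is $h_n=\mathrm{Id}$, $g_n=L_n$, which is lower triangular.

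Given the data at stage $d$, let $E_n$ be the degree-$d$ homogeneous part of $g_n\circ h_n-h_{n+1}\circ f_n$; the inductive bounds make the coefficients of $E_n$ bounded uniformly in $n$. Updating $h_n\mapsto h_n+u_n$ and $g_n\mapsto g_n+v_n$ with $u_n$ a degree-$d$ homogeneous map and $v_n=(0,c_nz^{d})$ the only degree-$d$ homogeneous lower triangular map, a direct expansion shows that the degree-$d$ part of the new commutator error vanishes precisely when
\be
u_{n+1}\circ L_n-L_n\circ u_n=E_n+v_n,
\ee
which, read off on the monomial $z^pw^q$ ($p+q=d$) in the $i$-th component (so $\lambda_{n,1}=a_n$, $\lambda_{n,2}=b_n$), is the scalar non-autonomous recursion
\be
a_n^pb_n^q\,u_{n+1}^{(p,q),i}-\lambda_{n,i}\,u_n^{(p,q),i}=E_n^{(p,q),i}+v_n^{(p,q),i},
\ee
where the free parameter $v_n^{(p,q),i}$ is nonzero only in the slot $i=2$, $q=0$.

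In every slot other than $(i,q)=(2,0)$ I would set $v_n\equiv 0$ and solve the recursion for a bounded sequence $(u_n^{(p,q),i})_n$. The key point is that the ratio of multipliers $r_n:=\lambda_{n,i}/(a_n^pb_n^q)$ is uniformly expanding: since $|b_n|\le|a_n|<1$ and $p+q=d\ge 2$, a short case check ($i=1$ with any $(p,q)$; $i=2$ with $q\ge 1$) gives $|r_n|\ge|a_n|^{-(d-1)}\ge D^{-1}>1$. Hence the homogeneous part of the recursion expands uniformly, and there is a unique bounded solution, obtained by iterating backwards into a geometrically convergent series whose sup-norm is controlled by $\sup_n|E_n^{(p,q),i}|$. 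In the one remaining slot $(i,q)=(2,0)$ I would instead take $u_n\equiv 0$ and $c_n:=-E_n^{(d,0),2}$, which is bounded; this is exactly the term absorbed into the lower triangular part of $g_n$. These updates preserve uniform boundedness, the normalization of the $h_n$, and lower triangularity of $g_n$, and now $g_n\circ h_n=h_{n+1}\circ f_n+O(\|z\|^{d+1})$. Iterating up to $d=k$ gives the claim; by the Remark following Lemma~\ref{lemma:abstract} we need not worry that the $h_n$ are only locally defined.

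I do not expect a genuine obstacle here: this is a routine non-autonomous normal-form computation. The only place that needs care is the expansion estimate $|r_n|\ge 1/D$, together with the observation that the single family of monomials for which it can fail — $z^d$ in the $w$-component, whose multiplier ratio $a_n^{-d}b_n$ may cross $1$ as $n$ varies — is precisely the one that a lower triangular $g_n$ is allowed to carry, so one never has to invert a quantity that is not uniformly bounded below. The secondary, purely bookkeeping, point is propagating the uniform-in-$n$ coefficient bounds through the induction, which is immediate from the uniform bounds on the $k$-jets of the $f_n$ and the geometric convergence of the backward series.
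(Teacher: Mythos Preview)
Your proposal is correct and matches the paper's treatment. The paper introduces Lemma~\ref{lemma:dominant2} precisely as ``a special case of Lemma~\ref{lemma:dominant}'', which is exactly your opening reduction (with $\xi=D$; the strict versus nonstrict inequality is immaterial for the argument). The direct degree-by-degree normalization you then spell out is the same non-autonomous Poincar\'e--Dulac mechanism the paper sketches later for Theorem~\ref{thm:triangulization}: an affine recursion on the conjugating coefficients whose multiplier has modulus at least $1/D$, hence a unique bounded backward solution, with the single non-expanding slot $z^d$ in the $w$-component absorbed into the lower triangular $g_n$. The only difference is scope: the paper's sketch is written under the order-of-contact-$k$ hypothesis of Theorem~\ref{thm:triangulization}, so it treats just the degree-$k$ step, while you carry out the full induction $d=2,\dots,k$ needed for Lemma~\ref{lemma:dominant2} as stated.
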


\subsection{Selecting the trains}

Here we will describe how to select the intervals $I_j$, and discuss the basic properties of these trains. The trains will be defined recursively depending on the linear parts of the sequence $(f_n)$. As before we write $(z,w) \mapsto (a_n z, b_n w)$ for the linear part of the map $f_n$. We then define
\begin{equation}
\begin{aligned}
f_{m,n} & = f_{m-1} \circ \cdots \circ f_n, \\
a_{m,n} & = a_{m-1} \cdot \cdots \cdot a_n, \; \; \mathrm{and} \\
b_{m,n} & = b_{m-1} \cdot \cdots \cdot b_n.
\end{aligned}
\end{equation}
Then let
\begin{equation}
\sigma_{m,n} = \log \left|\frac{a_{m,n}}{b_{m,n}}\right|.
\end{equation}
Our trains will depend only on the values of the function $\sigma$.

We let $\tilde{I_1} = [0,l)$, where $l$ is the smallest integer for which $\sigma_{l,0} \ge k$.  We define the trains recursively as follows. Suppose that we have already defined $\tilde{I_j} = [p,q)$. We consider all intervals $[r,s)$ satisfying $s \ge r \ge q$ and
\begin{equation}\label{eq:bound}
(-1)^j \cdot \sigma_{s,r} \ge k^{j+1}.
\end{equation}
We let $\tilde I_{j+1}$ be that interval $[r,s)$ for which $s$ is minimal and $(-1)^j\sigma_{s,r}$ is maximal, and we define $I_j = [p, r)$.

The intervals $I_j$ have some pleasant properties. First of all, from Equation \eqref{eq:bound} it is immediately clear that
\begin{equation*}
\sum \frac{|I_j|}{k^j} = \infty,
\end{equation*}
Hence we will be able to apply Lemma \ref{lemma:sparse}. Let us write $I_j = [p_j, p_{j+1})$, and $\tilde{I_j} = [p_j, q_j)$. Then for $n \in \tilde{I_j}$ we have
\begin{equation}\label{eq:train1}
(-1)^{j+1}\sigma_{n, p_j} \ge 0.
\end{equation}
By our choice of $\tilde{I_j}$ we also have
\begin{equation}\label{eq:train2}
(-1)^{j+1} \sigma_{q_j, n} \ge 0, \; \; \mathrm{and} \; \; k^j - \log(C) \ge (-1)^{j+1}\sigma_{q_j, p_j} \ge k^j.
\end{equation}
Furthermore, for $p_{j+1} \ge n \ge l \ge q_j$ we have
\begin{equation}\label{eq:train3}
(-1)^{j+1}\sigma_{n, l} > - k^{j+1} \; \; \mathrm{and} \; \; (-1)^{j+1} \sigma_{p_{j+1}, q_j} \ge 0.
\end{equation}

From now on we can forget the precise construction of the sequence $(I_j)$ and only work with the conditions in Equations \eqref{eq:train1}, \eqref{eq:train2} and \eqref{eq:train3}.

\subsection{Directing the trains}

We assume that the automorphisms $f_0, f_1, \ldots$ satisfy the conditions of Theorem \ref{thm:main}, and that we have constructed intervals $(I_j)$ satisfying Equations \eqref{eq:train1}, \eqref{eq:train2} and \eqref{eq:train3}. Here we change coordinates with a sequence of linear maps $l_n$ of the form
\begin{equation}
l_n(z,w) = (\theta_n z, \tau_n w).
\end{equation}
We write $\tilde{f}_n = l_{n+1} \circ f_n\circ  l_n^{-1}$ for the new maps, and we immediately see that $\tilde{f}_n$ is of the form
\begin{equation}
\tilde{f}_n (z,w) = (\tilde{a}_n z, \tilde{b}_n w) + O(k),
\end{equation}
where
\begin{equation}
\begin{aligned}
\tilde{a}_n & = \frac{\theta_{n+1}}{\theta_n} a_n, \; \; \mathrm{and}\\
\tilde{b}_n & = \frac{\tau_{n+1}}{\tau_n} b_n.
\end{aligned}
\end{equation}

Our goal is to define the maps $(l_n)$ in such a way that on each interval $I_j$ the maps $\tilde{f}_n$ satisfy the property $\tilde{a}_n \ge \tilde{b}_n$ (if $j$ is odd), and $\tilde{a}_n \le \tilde{b}_n$ (if $j$ is even). In order for the higher order terms of the maps $\tilde{f}_n$ not to blow up we also require that
\begin{equation}\label{boundeddistortion}
\theta_n^k \ge \tau_n, \; \; \mathrm{and} \; \theta_n \le \tau_n^k.
\end{equation}

In order to simplify the notation we let $j$ be odd, and write $\tilde{I}_j = [p,q)$ and $I_j = [p,r)$. The results are analogues for $j$ even. We assume that $\theta_p \ge e^{\frac{k}{k^2 - 1} k^j}$ and $\tau_p \ge e^{\frac{1}{k^2 - 1} k^j}$ and that the conditions in \eqref{boundeddistortion} are satisfied for $n = p$. Then for $n+1 \in I_j$ we recursively define
\begin{equation}
\theta_{n+1} = \left\{ \begin{aligned}
\theta_n \; \; \mathrm{if} \; |a_n| \ge |b_n|, \; \mathrm{or} \\
\frac{|b_n|}{|a_n|} \theta_n \; \; \mathrm{if} \; |a_n| < |b_n|.
\end{aligned}\right.
\end{equation}
Similarly we define
\begin{equation}
\tau_{n+1} = \left\{ \begin{aligned}
& \tau_n \; \; \; \; \; \; \; \; \mathrm{if} \; |b_n| \ge |a_n|, \; \mathrm{or} \\
& \min(\frac{|a_n|}{|b_n|} \tau_n, \theta_{n+1}^k) \; \; \mathrm{if} \; |b_n| < |a_n|.
\end{aligned}\right.
\end{equation}

\begin{lemma}
For $p \le n \le q$ we have
\begin{equation}
\tau_n^k \ge e^{k \cdot \sigma_{n, p}} \theta_n.
\end{equation}
\end{lemma}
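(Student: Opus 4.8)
The plan is to take logarithms and argue by induction on $n$. Set $A_n=\log\theta_n$, $B_n=\log\tau_n$, $s_n=\log|a_n/b_n|$, so that $\sigma_{n,p}=\sum_{i=p}^{n-1}s_i$ and the desired inequality $\tau_n^k\ge e^{k\sigma_{n,p}}\theta_n$ is just $kB_n\ge k\sigma_{n,p}+A_n$. First I would rewrite the two recursions in these terms. Since $s_n>0$ forces $|a_n|\ge|b_n|$, it forces $\theta_{n+1}=\theta_n$; hence $A_{n+1}=A_n+\max(0,-s_n)$, and in particular $(A_n)$ is nondecreasing, while $B_{n+1}=B_n$ if $s_n\le 0$ and $B_{n+1}=\min\!\big(B_n+s_n,\ kA_n\big)$ if $s_n>0$.

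The base case $n=p$ is immediate, since $\sigma_{p,p}=0$ and $kB_p\ge A_p$ is exactly the assumption $\theta_p\le\tau_p^k$ from \eqref{boundeddistortion}. For the inductive step I would distinguish three cases. If $s_n\le 0$, then $B$ is unchanged while $A$ grows by $-s_n\ge 0$ and $\sigma_{\cdot,p}$ drops by $-s_n$, so the right-hand side $k\sigma_{\cdot,p}+A$ changes by $(k-1)s_n\le 0$ (this is the only place $k\ge 2$ is used), and the inequality persists. If $s_n>0$ and $B_n+s_n\le kA_n$, then $A$ is unchanged and both sides grow by exactly $ks_n$. The remaining case, $s_n>0$ with $B_n+s_n>kA_n$, is the substantial one: there $B_{n+1}=kA_n=kA_{n+1}$, and the claim for $n+1$ collapses to $(k^2-1)A_{n+1}\ge k\,\sigma_{n+1,p}$, i.e. $\log\theta_{n+1}\ge\frac{k}{k^2-1}\,\sigma_{n+1,p}$.

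To handle this last case I would combine the monotonicity of $(A_n)$ with the normalization $\theta_p\ge e^{\frac{k}{k^2-1}k^j}$, which already gives $\log\theta_{n+1}\ge\frac{k}{k^2-1}k^j$, and then invoke the train estimates. By the minimality built into the definition of $q=q_j$, one has $\sigma_{n+1,p}<k^j$ whenever $n+1<q$ (for $j$ odd), and then $\frac{k}{k^2-1}\sigma_{n+1,p}<\frac{k}{k^2-1}k^j\le\log\theta_{n+1}$, settling all interior points. At the endpoint $n+1=q$ only $\sigma_{q,p}\le k^j-\log C$ from \eqref{eq:train2} is available, so $\sigma_{q,p}$ may overshoot $k^j$; here one must use the accumulated gain $\log\theta_q-\log\theta_p=\sum_{i=p}^{q-1}\max(0,-s_i)$ to absorb the overshoot $\sigma_{q,p}-k^j$, itself controlled via $|s_{q-1}|\le\log(D/C)$ from \eqref{eq:uniform} together with the normalizations of $\theta_p$ and $\tau_p$. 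I expect this endpoint bound to be the only genuine difficulty; everywhere else the argument is bookkeeping. The case of even $j$ is identical after interchanging the roles of the two coordinates, i.e. of $a_n$ with $b_n$ and of $\theta_n$ with $\tau_n$.
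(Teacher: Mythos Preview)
Your inductive scheme is exactly what the paper has in mind (its entire proof reads ``Follows easily by induction on $n$''), and your treatment of Cases~1 and~2 and of Case~3 for $n+1<q$ is correct: minimality of $q=q_j$ gives $\sigma_{n+1,p}<k^j$, and together with $A_{n+1}\ge A_p\ge\frac{k}{k^2-1}k^j$ this yields $(k^2-1)A_{n+1}\ge k\,k^j>k\,\sigma_{n+1,p}$ as you say.

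The gap is at the endpoint $n+1=q$, and your proposed patch does not close it. The ``accumulated gain'' $A_q-A_p=\sum_{i=p}^{q-1}\max(0,-s_i)$ can simply be zero while the overshoot $\sigma_{q,p}-k^j$ is positive. Concretely, take $k=2$, $j$ odd, $q=p+1$, and start with the minimal admissible values $A_p=\tfrac{4}{3}$, $B_p=\tfrac{2}{3}$ (so both constraints in \eqref{boundeddistortion} hold, the second with equality). Let $s_p=2+\delta$ with $0<\delta\le -\log C$; this is consistent with $q=p+1$ since $\sigma_{p+1,p}=s_p\ge k^j=2$. Then $A_q=\tfrac{4}{3}$ and $B_q=\min(\tfrac{2}{3}+2+\delta,\tfrac{8}{3})=\tfrac{8}{3}$, and the asserted inequality $kB_q\ge k\sigma_{q,p}+A_q$ becomes $\tfrac{16}{3}\ge \tfrac{16}{3}+2\delta$, which is false. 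There is no gain in $\theta$ to invoke, since $s_p>0$.

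So your instinct that the endpoint is ``the only genuine difficulty'' is right, but the difficulty is real: with the stated hypotheses the inequality at $n=q$ can fail by an additive $O(\log(D/C))$ in the exponent. This is an imprecision in the paper rather than in your reading of it, and it is harmless for the application (the endpoint value only seeds the next lemma's induction, where an $O(1)$ defect is absorbed). A clean repair is to build a fixed slack into the initial normalisations, requiring for instance $\theta_{p_j}\ge e^{\frac{k}{k^2-1}(k^j+M)}$ with $M=M(C,D,k)$, and to check that this slack propagates through the third lemma of the section; alternatively, state and prove the inequality with a uniform additive constant on the right-hand side.
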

\begin{proof}
Follows easily by induction on $n$.
\end{proof}

Note that $\tau_n \le \theta_n^k$ for all $n \in [p, r]$ follows immediately from the recursive definition of $\tau_{n+1}$ and the fact that $\theta_n$ can never decrease.

\begin{lemma}
For $p \le n \le r$ we have that $\theta_n \le \tau_n^k$.
\end{lemma}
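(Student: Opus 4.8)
The plan is to establish the inequality first on $\tilde I_j = [p,q]$ and then on the remaining part $[q,r]$ of $I_j$, transmitting the relevant information across the junction $n=q$. I treat the case $j$ odd (the even case being symmetric), pass to logarithms by setting $u_n = \log\theta_n$ and $v_n = \log\tau_n$, and record three facts used repeatedly: $u$ is non-decreasing (the $\theta_n$ never decrease), $u_n \ge u_p$ with $u_p \ge \tfrac{k}{k^2-1}k^j$ by hypothesis, and $v_n \le k u_n$ for all $n\in[p,r]$ (the inequality already noted after the previous Lemma).

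On $[p,q]$ the assertion is immediate from the previous Lemma: since $j$ is odd, Equation~\eqref{eq:train1} gives $\sigma_{n,p}\ge 0$ for $p\le n\le q$, hence $\tau_n^k \ge e^{k\sigma_{n,p}}\theta_n \ge \theta_n$. Moreover Equation~\eqref{eq:train2} gives $\sigma_{q,p}\ge k^j$, which upgrades this at the endpoint to the strengthened estimate
\begin{equation}\label{eq:strong}
u_q \le k v_q - k^{j+1}.
\end{equation}
This is the information carried into the second part.

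For $n\in[q,r]$, let $q'$ be the largest index in $(q,n]$ at which the recursive definition of $\tau$ selected the cap value $\tau_{q'}=\theta_{q'}^k$, and set $q'=q$ if no such index exists. On the cap-free stretch $[q',n]$ one reads off from the recursions, writing $s_i=\sigma_{i+1,i}$, that $u_n = u_{q'}+\sum_{q'\le i<n}(s_i)^-$ and $v_n = v_{q'}+\sum_{q'\le i<n}(s_i)^+$; subtracting and using $\sum(s_i)^+-\sum(s_i)^-=\sigma_{n,q'}$ yields $u_n = v_n + (u_{q'}-v_{q'}) - \sigma_{n,q'}$. If $q'=q$, insert \eqref{eq:strong}, use the gap estimate $\sigma_{n,q}>-k^{j+1}$ from Equation~\eqref{eq:train3}, and use that $v$ is non-decreasing on $[q,r]$ (a consequence of $v_n\le k u_n\le k u_{n+1}$): this gives $u_n < v_n + (k-1)v_q \le k v_n$, i.e. $\theta_n\le\tau_n^k$. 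If $q'>q$, then $v_{q'}=k u_{q'}$, and since $u_{q'}\ge u_p\ge\tfrac{k}{k^2-1}k^j$ one has $(k^2-1)u_{q'}\ge k^{j+1}$, which is exactly the statement that \eqref{eq:strong} holds with $q'$ in place of $q$; the same computation, now based at $q'$ and using $\sigma_{n,q'}>-k^{j+1}$, again gives $u_n\le k v_n$.

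The verifications that the recursions produce the two ``free evolution'' identities on a cap-free stretch, and that $v$ is monotone, are routine. The one delicate point — and the only place where the lower thresholds imposed on $\theta_p$ and $\tau_p$ are genuinely needed — is the restart at $q'$ when $q'>q$: one must know that a value of $\tau$ forced down by the cap is still large enough to re-establish the strengthened inequality \eqref{eq:strong}, so that the induction can be resumed from there. This is precisely why the exponent $\tfrac{k}{k^2-1}k^j$ appears in the hypotheses.
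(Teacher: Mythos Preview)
Your argument is correct and rests on exactly the same three ingredients as the paper's: the strengthened bound $k\log\tau_q-\log\theta_q\ge k^{j+1}$ at $n=q$ (from the previous lemma together with $\sigma_{q,p}\ge k^j$), the drift control $\sigma_{n,m}>-k^{j+1}$ on $[q,r]$ supplied by \eqref{eq:train3}, and the observation that whenever the cap $\tau=\theta^k$ is triggered the strengthened bound resets, because $(k^2-1)\log\theta_n\ge(k^2-1)\log\theta_p\ge k^{j+1}$.

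The packaging is different. The paper asserts, by an induction on $n\ge q$ whose step is not written out, the family of inequalities
\[
\frac{\tau_n^k}{\theta_n}\;\ge\;e^{\,k^{j+1}+\sigma_{n,m}}\qquad\text{for all }q\le m\le n\text{ with }\sigma_{n,m}\le 0,
\]
and then reads off the conclusion from \eqref{eq:train3}. You instead fix $n$, locate the last cap index $q'\in[q,n]$, and compute the free evolution $u_n-v_n=(u_{q'}-v_{q'})-\sigma_{n,q'}$ on the cap-free stretch, splitting on whether $q'=q$ or $q'>q$. Your route is more explicit: it isolates precisely where the starting hypothesis $\theta_p\ge e^{k^{j+1}/(k^2-1)}$ is actually used (at the restart after a cap), and it avoids having to identify the correct inductive invariant, which in the paper is stated tersely and whose verification in the step $s_n<0$ requires some care. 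Unpacked, the two arguments coincide: the paper's ``admissible $m$'' plays the role of your $q'$, and the cap case of its induction reduces to the same computation $(k^2-1)u_{q'}\ge k^{j+1}$. Neither approach yields more than the other; yours is simply the more transparent write-up.
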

\begin{proof}
For $n \le q$ this is guaranteed by the previous lemma. From $q$ on it follows by induction on $n$ that
\begin{equation}
\frac{\tau_n^k}{\theta_n} \ge e^{k^{j+1} + \sigma_{n,m}},
\end{equation}
for all $q \le m \le n$ for which $\sigma_{n,m} \le 0$. The statement in the Lemma now follows from \eqref{eq:train3}.
\end{proof}

Finally we need to check that $\theta_r$ and $\tau_r$ are large enough to satisfy the starting hypothesis for the next interval $I_{j+1}$.

\begin{lemma}
We have $\theta_r \ge e^{\frac{1}{k^2 - 1} k^{j+1}}$ and $\tau_p \ge e^{\frac{k}{k^2 - 1} k^{j+1}}$.
\end{lemma}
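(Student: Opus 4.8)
The plan is to read both inequalities straight off monotonicity of the scaling sequences $(\theta_n)$ and $(\tau_n)$ together with the two facts already established in this subsection: the inequality $\tau_n^k \ge e^{k\sigma_{n,p}}\theta_n$ on $[p,q]$, and the lower bound $\sigma_{q,p}\ge k^j$ coming from \eqref{eq:train2}. As in the rest of the subsection I take $j$ odd; the even case is symmetric under exchanging $\theta$ and $\tau$. (The symbol $\tau_p$ in the statement is the value of $\tau$ at the left endpoint $r$ of the next train $I_{j+1}$, which in the local notation used when one runs the argument for $I_{j+1}$ is called $\tau_p$; so what must be shown is $\theta_r\ge e^{\frac1{k^2-1}k^{j+1}}$ and $\tau_r\ge e^{\frac{k}{k^2-1}k^{j+1}}$, after which the conditions \eqref{boundeddistortion} at $n=r$ needed for $I_{j+1}$ are already provided by the remark $\tau_n\le\theta_n^k$ and the previous lemma $\theta_n\le\tau_n^k$.)

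First I would record that $\theta_n$ is non-decreasing: at each step the recursion either leaves $\theta_n$ fixed or multiplies it by $|b_n|/|a_n|>1$. Hence $\theta_r\ge\theta_p$, and since the starting hypothesis for $I_j$ gives $\theta_p\ge e^{\frac{k}{k^2-1}k^j}$ and $\frac{k}{k^2-1}k^j=\frac1{k^2-1}k^{j+1}$, the bound $\theta_r\ge e^{\frac1{k^2-1}k^{j+1}}$ is immediate.

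Next I would check that $\tau_n$ is also non-decreasing on $[p,r]$: in the only nontrivial branch the update is $\tau_{n+1}=\min(\tfrac{|a_n|}{|b_n|}\tau_n,\theta_{n+1}^k)$, and both arguments dominate $\tau_n$, the first because $|a_n|>|b_n|$ and the second because $\theta_{n+1}^k\ge\theta_n^k\ge\tau_n$ by the recorded remark $\tau_n\le\theta_n^k$; otherwise $\tau_{n+1}=\tau_n$. Then I would evaluate $\tau_n^k\ge e^{k\sigma_{n,p}}\theta_n$ at $n=q$ and combine it with $\theta_q\ge\theta_p\ge e^{\frac{k}{k^2-1}k^j}$ and $\sigma_{q,p}\ge k^j$ (note $(-1)^{j+1}=1$ since $j$ is odd) to get $\tau_q^k\ge e^{k^{j+1}}e^{\frac{k}{k^2-1}k^j}=e^{\frac{k^2}{k^2-1}k^{j+1}}$, hence $\tau_q\ge e^{\frac{k}{k^2-1}k^{j+1}}$; since $q\le r$ and $\tau$ is non-decreasing, $\tau_r\ge\tau_q\ge e^{\frac{k}{k^2-1}k^{j+1}}$.

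The computation is short, so there is no genuine obstacle; the only points needing a little care are that $\tau_n$ is truly non-decreasing (this is the one place where the auxiliary bound $\tau_n\le\theta_n^k$ is invoked) and the exponent bookkeeping $\frac{k}{k^2-1}k^j=\frac1{k^2-1}k^{j+1}$ and $k^{j+1}+\frac{k}{k^2-1}k^j=\frac{k^2}{k^2-1}k^{j+1}$, which make the two resulting bounds match the starting hypothesis for $I_{j+1}$ on the nose.
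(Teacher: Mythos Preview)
Your argument is correct and follows the same approach as the paper: monotonicity of $\theta_n$ gives the first bound directly, while the second follows from the previous lemma $\tau_q^k\ge e^{k\sigma_{q,p}}\theta_q$ combined with $\theta_q\ge\theta_p$, $\sigma_{q,p}\ge k^j$, and then $\tau_r\ge\tau_q$. You simply spell out the monotonicity of $\tau_n$ and the exponent arithmetic more carefully than the paper's terse one-line version.
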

\begin{proof}
The estimate on $\theta_r$ is immediate since $\theta_n$ does not decrease with $n$. The estimate on $\tau_p$ follows from
\begin{equation}
\tau_p \ge \tau_q \ge e^{\sigma_{q, p}} \cdot e^{\frac{k}{k^2 - 1} k^j}.
\end{equation}
\end{proof}

Our conclusion is the following.

\begin{thm}\label{thm:directing}
Let $(\tilde{f}_n)$ be the sequence defined by
\begin{equation}
\tilde{f}_n = l_{n+1} \circ f_n \circ l_n^{-1}.
\end{equation}
Then $(\tilde{f}_n)$ is a bounded sequence of automorphisms, and $\Omega_{(\tilde{f}_n)} \cong \Omega_{(f_n)}$. Moreover
$$
(-1)^{n+1} \log(\frac{|\tilde{a}_n|}{|\tilde{b}_n|}) \ge 0
$$
for all $n \in I_j$.
\end{thm}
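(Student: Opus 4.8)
\medskip

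\noindent The plan is to verify the three assertions in turn. The sign condition and the boundedness of $(\tilde f_n)$ are bookkeeping with the recursion defining $\theta_n,\tau_n$ together with the constraints \eqref{boundeddistortion}; the equivalence of the two basins is the only place where the hypothesis $D^{k+1}<C$ enters, and it is the heart of the argument.

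By the three lemmas preceding the statement, \eqref{boundeddistortion} holds for every $n$ and the starting hypotheses propagate from one train to the next, so $(l_n)$ is a single well-defined sequence on $[0,\infty)$. Taking $j$ odd (the case $j$ even is the mirror image), the recursive definitions of $\theta_{n+1}$ and $\tau_{n+1}$ give at once $|\tilde a_n|=\max(|a_n|,|b_n|)$ and $C\le|\tilde b_n|\le\max(|a_n|,|b_n|)$: the upper bound on $|\tilde b_n|$ uses that in the third branch $\theta_{n+1}^k\le(|a_n|/|b_n|)\tau_n$, the lower bound uses $\theta_{n+1}^k\ge\theta_n^k\ge\tau_n$. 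In particular $|\tilde a_n|,|\tilde b_n|\in[C,D]\subset(0,1)$, so every $\tilde f_n$ still contracts, and $(-1)^{j+1}\log(|\tilde a_n|/|\tilde b_n|)\ge0$, which is exactly what the recursion was built to force. For boundedness, write $f_n=\mathrm{diag}(a_n,b_n)+R_n$ with $\|R_n(z)\|\le M\|z\|^k$ on a fixed ball, $M$ uniform by the Cauchy estimates since $\sup_n\|f_n\|_{\mathbb B}\le D$ and each $f_n$ has order of contact $k$. Conjugating by $l_n$, the coefficient in $\tilde f_n$ of a monomial $z^\alpha w^\beta$ with $\alpha+\beta\ge k$ is that of $R_n$ multiplied by $\theta_{n+1}/(\theta_n^\alpha\tau_n^\beta)$ or $\tau_{n+1}/(\theta_n^\alpha\tau_n^\beta)$; using $\theta_{n+1}\le(D/C)\theta_n$, $\tau_{n+1}\le(D/C)\tau_n$, $\theta_n,\tau_n\ge1$, $\theta_n\le\tau_n^k$ and $\tau_n\le\theta_n^k$ one checks case by case that each multiplier is at most $D/C$ (the only non-trivial case is a pure power of the non-dominant variable, where $\theta_n\le\tau_n^k$, resp. $\tau_n\le\theta_n^k$, is used). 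Hence the nonlinear part of $\tilde f_n$ is uniformly bounded near the origin, and with $|\tilde a_n|,|\tilde b_n|\in(0,1)$ this shows $(\tilde f_n)$ is a bounded sequence of automorphisms satisfying the hypotheses of Conjecture~\ref{conj:main} on a uniform ball.

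For $\Omega_{(\tilde f_n)}\cong\Omega_{(f_n)}$, note $\tilde f_n\circ\cdots\circ\tilde f_0=l_{n+1}\circ f^{n+1}\circ l_0^{-1}$, so it suffices to show the linear automorphism $l_0$ carries $\Omega_{(f_n)}$ onto $\Omega_{(\tilde f_n)}$, i.e. that for $w=l_0^{-1}z$ one has $f^n(w)\to0$ if and only if $l_n(f^n(w))\to0$. One implication is trivial, since $\min(\theta_n,\tau_n)\ge1$ gives $\|f^n(w)\|\le\|l_n(f^n(w))\|$. For the other, fix $w\in\Omega_{(f_n)}$; from some $N_0$ on the orbit $x_m=f^m(w)$ stays in $\mathbb B$, so $\|x_m\|\le D^{m-N_0}$ there. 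Writing $x_m=(u_m,v_m)$ and unrolling $u_{m+1}=a_mu_m+r_m$ with $|r_m|\le M\|x_m\|^k\le MD^{k(m-N_0)}$ gives
\begin{equation*}
\theta_n u_n=\theta_n a_{n,N_0}u_{N_0}+\sum_{m=N_0}^{n-1}\theta_n a_{n,m+1}\,r_m .
\end{equation*}
The key estimate is $\theta_n|a_{n,m}|=\theta_m\prod_{i=m}^{n-1}\max(|a_i|,|b_i|)\le\theta_m D^{n-m}$ (the product telescopes because $\theta_n/\theta_m=\prod_{i=m}^{n-1}\max(1,|b_i/a_i|)$): the first term then tends to $0$, and, using in addition $\theta_{m+1}\le(D/C)^{m+1-N_0}\theta_{N_0}$, the sum is bounded by a constant times $D^{n}\sum_{m\ge N_0}(D^{k}/C)^m$, which tends to $0$ precisely because $D^{k+1}<C$. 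An identical estimate, with $\tau_n|b_{n,m}|=\tau_m\prod_{i=m}^{n-1}\bigl(|b_i|\,\tau_{i+1}/\tau_i\bigr)\le\tau_m D^{n-m}$ (each factor is $\le\max(|a_i|,|b_i|)$, a one-line check of the three branches for $\tau_{i+1}$), gives $\tau_n v_n\to0$. Hence $l_n(x_n)\to0$, so $z\in\Omega_{(\tilde f_n)}$, and the two basins are biholomorphic via $l_0$.

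The main obstacle is this last step. Because the maps $l_n$ are unbounded — indeed $\theta_n,\tau_n$ may grow geometrically — one cannot invoke Lemma~\ref{lemma:conjugation}, and one must show directly that amplifying an orbit of the basin by $l_n$ still yields a sequence tending to $0$. This is precisely where, and only where, $D^{k+1}<C$ is used: the worst-case per-step growth factor $D/C$ of $\theta_n,\tau_n$ is absorbed by the contraction $D$ of the diagonal part together with the degree-$k$ decay of the nonlinear corrections, leaving the residual geometric factor $(D^{k+1}/C)^n$, which is summable exactly when $D^{k+1}<C$. Everything else reduces to routine manipulation of the recursion and the already-proved constraints \eqref{boundeddistortion}.
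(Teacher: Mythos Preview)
Your proof is correct and follows the same outline as the paper's (sign condition and boundedness from the recursion together with \eqref{boundeddistortion}, basin equivalence via $l_0$ with the nontrivial inclusion reduced to controlling the growth of $\theta_n,\tau_n$ against the orbit contraction), supplying the quantitative details—most notably the telescoping bound $\theta_n|a_{n,m}|\le\theta_m D^{n-m}$ and the explicit role of $D^{k+1}<C$—that the paper's terse argument only gestures at. One cosmetic point: write the error sum as the finite sum $\sum_{m=N_0}^{n-1}$ rather than $\sum_{m\ge N_0}$, since for $D^k\ge C$ the infinite series diverges; your own closing remark about the residual factor $(D^{k+1}/C)^n$ shows you already have the correct finite estimate in mind.
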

\begin{proof}
The conditions on the coefficients of the linear parts of the maps $\tilde{f}_n$ follows from the discussion earlier in this section. The fact that the sequence $(\tilde{f}_n)$ is bounded follows from the facts that the linear parts stay bounded, and that the higher order terms grow by at most a uniform constant.

To see that the two basins of attraction are biholomorphically equivalent (with biholomorphism $l_0$), note that $\tilde{f}_{n,0} = l_{n+1} \circ f_{n,0}\circ  l_0^{-1}$. Since the entries of the diagonal linear maps $l_{n+1}$ are always strictly greater than $1$, it follows that the basin of the sequence $(\tilde{f}_n)$ is contained in the $l_0$-image of the basin of the sequence $(f_n)$. The other direction follows from the fact that the coefficients of the maps $l_n$ grow strictly smaller than the rate at which orbits are contracted to the origin by the sequence $(f_n)$.
\end{proof}

\subsection{Connecting the trains}

The following result is a direct consequence of Lemma \ref{lemma:dominant}.

\begin{thm}\label{thm:triangulization}
Let $(f_n)$ be a bounded sequence of automorphisms of $\mathbb{C}^2$ whose linear parts are of the form $(z,w) \mapsto (a_n z, b_n w)$ and whose order of contact is $k$. Suppose that $|a_n| \ge |b_n|$ for all $n = 0, 1, \ldots$. Then there exists bounded sequences $(h_n)$ and $(g_n)$ such that $g_n \circ h_n = h_{n+1} \circ f_n + O(k+1)$ for all $n$. Here the maps $g_n$ can be chosen to be lower triangular maps, and the maps $h_n$ can be chosen of the form $(z,w) \mapsto \mathrm{Id} + h.o.t.$.
\end{thm}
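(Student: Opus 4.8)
The plan is to read the statement off from Lemma \ref{lemma:dominant}, applied with vanishing off-diagonal term. First I would record the elementary a priori bounds on the linear parts: since each $f_n$ is an automorphism of $\mathbb C^2$ with $C\|z\|\le\|f_n(z)\|\le D\|z\|$ on $\mathbb B$, comparing norms for small $z$ (equivalently, passing to the derivative at the origin) shows that the diagonal entries satisfy $C\le|a_n|,|b_n|\le D$ for every $n$; in particular $|b_n|\le D<1$. This is the only place where the uniform contraction hypothesis \eqref{eq:uniform} enters.

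The main step is then to upgrade the pointwise inequality $|b_n|\le|a_n|$ to the uniform domination required by Lemma \ref{lemma:dominant}. Combining $|b_n|\le|a_n|$ with $|b_n|\le D$ gives
\[
|b_n|^2 \le |a_n|\,|b_n| \le D\,|a_n|,
\]
so, fixing any constant $\xi$ with $D<\xi<1$, we obtain $|b_n|^2<\xi|a_n|$ for all $n$. The linear part $(z,w)\mapsto(a_nz,b_nw)$ of $f_n$ is the case $c_n=0$ of the form appearing in Lemma \ref{lemma:dominant}, so the sequence $(f_n)$ now satisfies the hypotheses of that lemma. Applying it (with the given $k$) produces bounded sequences $(h_n)$ with $h_n=\mathrm{Id}+h.o.t.$ and $(g_n)$ with each $g_n$ a lower triangular polynomial map, for which Diagram \eqref{diagram} commutes as germs of order $k$. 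Unwinding the commutation of germs of order $k$ is precisely $g_n\circ h_n = h_{n+1}\circ f_n + O(\|z\|^{k+1})$: since each $f_n$ has order of contact $k$, the contributions of degree $>k$ are irrelevant once one works at the level of $k$-jets, and this is exactly the asserted identity.

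The step I expect to require genuine care — and the reason the statement is worth isolating here, as a clean input to the ``connecting'' argument applied on each directed train — is the one just described: the bare hypothesis $|a_n|\ge|b_n|$ is by itself too weak for Lemma \ref{lemma:dominant}, and it is only the uniform separation of the contraction rates from $1$ that supplies the needed quantitative gap $\xi<1$. Beyond that observation the proof is a verbatim invocation of Lemma \ref{lemma:dominant} (of which Lemma \ref{lemma:dominant2} is the corresponding special case), with no further obstacle; the only bookkeeping is that boundedness of $(g_n)$ and $(h_n)$, and the normalization $h_n=\mathrm{Id}+h.o.t.$, are already part of that lemma's conclusion.
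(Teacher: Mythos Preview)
Your argument is correct and matches the paper's own one-line justification: the paper states immediately that the result ``is a direct consequence of Lemma \ref{lemma:dominant}'', and your inequality $|b_n|^2 \le |a_n|\,|b_n| \le D\,|a_n| < \xi\,|a_n|$ is exactly the bridge that reduces the hypothesis $|a_n|\ge|b_n|$ to the domination condition of that lemma.

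One remark on what the paper does beyond this. After the one-line reduction, the paper spells out the construction explicitly: it first normalizes to $f_n(z,w)=(a_nz+c_nw^k,\,b_nw+d_nz^k)+O(k+1)$, takes $h_n(z,w)=(z+\alpha_n w^k,w)$ and $g_n(z,w)=(a_nz,\,b_nw+\gamma_nz^k)$, and derives the affine recursion $\alpha_n=\frac{b_n^k}{a_n}\alpha_{n+1}+\frac{c_n}{a_n}$, whose backward contraction factor $|b_n^k/a_n|\le D^{k-1}<1$ yields a unique bounded orbit. This is not merely an alternative proof: the explicit form of $h_n$ and, crucially, the freedom to choose the terminal value $\alpha_{r_j-1}$ arbitrarily on each train are what make the subsequent ``connecting'' step work, where one must match the shape of $h_{r_j}$ coming from the next (even) train with the $h_{r_j-1}$ on the current (odd) one. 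Your black-box invocation of Lemma \ref{lemma:dominant} proves the theorem as stated, but does not by itself expose this flexibility; if you intend to feed the result into the connecting argument, you will still need to open the box.
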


In fact, even without the condition $|a_n| \ge |b_n|$ we can always change coordinates so that the maps $f_n$ become of the form
\begin{equation}
f_n(z,w) = (a_n z + c_n w^k, b_n w + d_n z^k) + O(k+1).
\end{equation}
Hence we may assume that our maps are all of this form. We outline the proof of Theorem \ref{thm:triangulization}. Our goal is to find sequences $(g_n)$ and $(h_n)$ so that the following diagram commutes up to degree $k$.
\begin{equation}
\begin{CD}
\mathbb{C}^2 @>f_0>> \mathbb{C}^2 @>f_1>> \mathbb{C}^2 @>f_2>> \cdots\\
@VVh_0V @VVh_1V @VVh_2V\\
\mathbb{C}^2 @>g_0>> \mathbb{C}^2 @>g_1>> \mathbb{C}^2 @>g_2>> \cdots
\end{CD}
\end{equation}
Here $h_n$ will be of the form
\begin{equation} \label{form:h_n}
h_n: (z,w) \mapsto (z + \alpha_n w^k, w),
\end{equation}
and $g_n$ will be of the form
\begin{equation}
g_n: (z,w) \mapsto (a_n z, b_n w + \gamma_n z^k).
\end{equation}
We need that
\begin{equation}
h_n = g_n^{-1} \circ h_{n+1} \circ f_n + O(k+1).
\end{equation}
It is clear that given the map $h_{n+1}$ we can always choose $g_n$ so that $h_n$ is of the form \eqref{form:h_n}, and that the map $g_n$ is unique. Hence we can view $\alpha_n$ as a function of $\alpha_{n+1}$. In fact, this function is affine and given by
\begin{equation}
\alpha_n = \frac{b_n^k}{a_n} \alpha_{n+1} + \frac{c_n}{a_n},
\end{equation}
or equivalently
\begin{equation}
\alpha_{n+1} = \frac{a_n}{b_n^k} \alpha_n + \frac{c_n}{b_n^k}.
\end{equation}
Note that both coefficients of these affine maps are uniformly bounded from above in norm, and that $|\frac{a_n}{b_n^k}| \ge \frac{1}{D}$. It follows that there exist a unique bounded orbit for this sequence of affine maps. In other words, we can choose a sequence $(h_n)$ whose $k$-th degree terms are uniformly bounded. It follows directly that the maps $g_n$ are also uniformly bounded, which completes the proof.

More strongly we note that we have much more flexibility if we have a sequence of intervals $(I_j)$ (with $j$ odd) and we want to find sequences $(h_n)$ and $(g_n)$ on each $I_j$ that are uniformly bounded over all $j$. In fact for each interval $I_j$ we can start with any value for $\alpha_{r_j - 1}$ and inductively define the maps $h_n$ backwards. We merely need to choose uniformly bounded starting values $\alpha_{r_j - 1}$.

The entire story works just the same for the intervals $I_j$ with $j$ even. In this case we have that $|a_n| \le |b_n|$, so we work with maps $h_n$ and $g_n$ of the form
\begin{equation}
h_n: (z,w) \mapsto (z , w + \beta_n z^k),
\end{equation}
and
\begin{equation}
g_n: (z,w) \mapsto (a_n z + \delta_n w^k, b_n w).
\end{equation}
The only matter to which we should pay attention is what happens where the different intervals are connected. Hence let us look at the following part of the commutative diagram
\begin{equation}
\begin{CD}
\cdots @>f_{r_j - 2}>> \mathbb{C}^2 @>f_{r_j - 1}>> \mathbb{C}^2 @>f_{r_j}>> \mathbb{C}^2 @>f_{r_j + 1}>> \cdots\\
@VV{}V @VVh_{r_j - 1}V @VVh_{r_j}V @VVh_{r_j + 1}V\\
\cdots @>g_{r_j - 2}>> \mathbb{C}^2 @>g_{r_j - 1}>> \mathbb{C}^2 @>g_{r_j}>> \mathbb{C}^2 @>g_{r_j + 1}>> \cdots
\end{CD}
\end{equation}
We consider the case where $j$ is odd and $j+1$ even, the other is similar. Imagine that we have constructed the maps $h_n$ and $g_n$ on the interval $I_{j+1}$. Then $h_{r_j}$ is of the form
\begin{equation}
h_{r_j}: (z,w) \mapsto (z , w + \beta_{r_j} z^k).
\end{equation}
As before we can find $g_{r_j -1}$ so that the map $h_{r_j - 1}$ given by $g_{r_j -1}^{-1} \circ h_{r_j} \circ f_{r_j -1}$ is of the form
\begin{equation}
h_{r_j -1}: (z,w) \mapsto (z + \alpha_{r_j -1}w^k, w).
\end{equation}
We note that while the map $g_{r_j -1}$ does depend on the coefficient $\beta_{r_j}$, the coefficient $\alpha_{r_j -1}$ (and equivalently also the map $h_{r_j -1}$) only depends on the coefficients of $f_{r_j-1}$ and not on those of $h_{r_j}$. Hence we can find uniformly bounded maps $h_n$ on each of the intervals, and therefore also uniformly bounded maps $g_n$, which are lower triangular for $n$ in each interval $[p_j, r_j)$ with $j$ odd, and upper triangular when $j$ is even. This completes the proof of Theorem \ref{thm:main}.

\end{document}